\newcommand{\sgn}{\mathop{\mathrm{sign}}}
\newcommand*{\supp}{\mathrm{supp}}
\newcommand*{\var}{\textnormal{var}}
\newcommand{\e}{\mathbb{E}}
\newcommand{\nn}{\nonumber}
\newcommand \btt{\bbeta}
\newcommand \hbt{\hat{\btt}}
\newcommand \bttc{\bbeta^*}
\def\##1\#{\begin{align}#1\end{align}}
\def\$#1\${\begin{align*}#1\end{align*}}
\def\T{\mathrm{\scriptstyle T}} 
\def\sn{\sum_{i=1}^n}
\def\Sb{\mathbf{S}}
\newcommand{\red}[1]{\textcolor{red}{#1}}
\newcommand{\normal}[1]{\textnormal{#1}}
\newcommand{\wt}{\widetilde}
\newcommand{\bfsym}[1]{\ensuremath{\boldsymbol{#1}}}
       \def \bbeta    {\bfsym{\beta}}
       \def \bdelta   {\bfsym{\delta}}
\newcommand{\eff}{\textnormal{eff}}
\newcommand{\Rom}[1]{\text{\uppercase\expandafter{\romannumeral #1\relax}}}
\begin{document}

\title{Adaptive Huber Regression
\thanks{Qiang Sun is Assistant Professor, Department of Statistical Sciences, University of Toronto, Toronto, ON M5S 3G3, Canada (E-mail: \href{mailto:qsun@utstat.toronto.edu}{\textsf{qsun@utstat.toronto.edu}}).
Wen-Xin Zhou is Assistant Professor, Department of Mathematics, University
of California, San Diego, La Jolla, CA 92093 (E-mail: \href{mailto:wez243@ucsd.edu}{\textsf{wez243@ucsd.edu}}).
Jianqing Fan is Honorary Professor, School of Data Science, Fudan University, Shanghai, China and
Frederick L. Moore '18 Professor of Finance, Department of Operations Research and Financial Engineering,
Princeton University, NJ 08544 (E-mail:  \href{mailto:jqfan@princeton.edu}{\textsf{jqfan@princeton.edu}}).
}
\vspace{10pt}}

\author{Qiang Sun,~Wen-Xin Zhou,~and~Jianqing Fan}

\date{}
\maketitle

\vspace{-.5in}

\begin{abstract}
Big data can easily be contaminated by outliers or contain variables with heavy-tailed distributions, which makes many conventional methods inadequate. To address this challenge, we propose the adaptive Huber regression for robust estimation and inference. The key observation is that the robustification parameter should adapt to the sample size, dimension and moments for optimal tradeoff between bias and robustness. Our theoretical framework deals with heavy-tailed distributions with bounded $(1+\delta)$-th moment for any $\delta > 0$. We establish a sharp phase transition for robust estimation of regression parameters in both low and high dimensions: when $\delta  \geq 1$, the estimator admits a sub-Gaussian-type deviation bound without sub-Gaussian assumptions on the data, while only a slower rate is available in the regime  $0<\delta< 1$. Furthermore, this transition is smooth and optimal. We extend the methodology to allow both heavy-tailed predictors and observation noise. Simulation studies lend further support to the theory. In a genetic study of cancer cell lines that exhibit heavy-tailedness, the proposed methods are shown to be more robust and predictive.

\end{abstract}

\noindent
{\bf Keywords}: Adaptive Huber regression, bias and robustness tradeoff, finite-sample inference, heavy-tailed data, nonasymptotic optimality, phase transition.

\section{Introduction}\label{sec:1}

Modern data acquisitions have facilitated the collection of massive and high dimensional data with complex structures. Along with holding great promises for discovering subtle population patterns that are less achievable with small-scale data, big data have introduced a series of new challenges to data analysis both computationally and statistically \citep{loh2015regularized, fan2015tac}. During the last two decades, extensive progress has been made towards extracting useful information from massive data with high dimensional features and sub-Gaussian tails\footnote{A random variable $Z$ is said to have sub-Gaussian tails if there exists constants $c_1$ and $c_2$ such that $\PP(|Z|>t)\leq c_1\exp(-c_2t^2)$ for any $t \geq 0$. } \citep{tibs1996regression, fan2001variable, lars2004, bickel2009simultaneous}. We refer to the monographs, \cite{BvdG2011} and \cite{HTW2015}, for a systematic coverage of contemporary statistical methods  for high dimensional data.

The sub-Gaussian tails requirement, albeit being convenient for theoretical analysis, is not realistic in many practical applications since modern data are often collected with low quality.
For example, a recent study on functional magnetic resonance imaging  (fMRI) \citep{eklund2016cluster} shows that the principal cause of invalid fMRI inferences is that the data do not follow the assumed Gaussian shape, which speaks to the need of validating the statistical methods being used in the field of neuroimaging. In a microarray data example considered in \cite{Lan2015}, it is observed that some gene expression levels have heavy tails as their kurtosises are much larger than 3, despite of the normalization methods used.
In finance, the power-law nature of the distribution of returns has been validated as a stylized fact \citep{C2001}. \cite{fan2016robust} argued that heavy-tailed distribution is a stylized feature for high dimensional data and proposed a shrinkage principle to attenuate the influence of outliers.
Standard statistical procedures that are based on the method of least squares often behave poorly in the presence of heavy-tailed data\footnote{We say a random variable $X$ has heavy tails if $\PP(|X| > t)$ decays to zero polynomially in $1/t$ as $t \to \infty$.
} \citep{catoni2012challenging}. It is therefore of ever-increasing interest to  develop  new statistical methods that are robust against heavy-tailed errors and other potential forms of contamination.

In this paper, we first revisit  the robust regression that was initiated by Peter Huber in his seminal work \cite{Huber1973}.  Asymptotic properties of the Huber estimator have been well studied in the literature. We refer to \cite{Huber1973}, \cite{YohaiMaronna1979}, \cite{Portnoy1985}, \cite{Mammen1989} and \cite{HeShao1996, HeShao2000} for an unavoidably incomplete overview.  However, in all of the aforementioned papers, the robustification parameter is suggested to be set as fixed according to the 95\% asymptotic efficiency rule.  Thus, this procedure can not estimate the model-generating parameters consistently when the sample distribution is asymmetric.

From a nonasymptotic perspective (rather than an asymptotic efficiency rule),
we propose to use the Huber regression  with an adaptive {robustification parameter}, which is referred to as the {\it adaptive Huber regression}, for robust  estimation and inference. Our adaptive procedure achieves the nonasymptotic robustness in the sense that the resulting estimator admits exponential-type concentration bounds when only low-order moments exist.  Moreover, the resulting  estimator  is also an asymptotically  unbiased estimate for the parameters of interest.   In particular, we do not impose symmetry and homoscedasticity conditions on error distributions, so that our problem is intrinsically different from median/quantile regression models, which are also of independent interest and serve as important robust techniques \citep{K2005}.

We made several major contributions towards robust modeling in this paper. First and foremost, we establish nonasymptotic deviation bounds for adaptive Huber regression when the error variables have only finite $(1+\delta)$-th moments. By providing a matching lower bound, we observe a sharp phase transition phenomenon, which is in line with that discovered by \cite{devroye2015sub} for univariate mean estimation.
Second, a similar phase transition  for regularized adaptive  Huber regression is established in high dimensions. By defining the effective dimension and effective sample size, we present nonasymptotic results under the two different regimes in a unified form.  Last, by exploiting the localized analysis  developed in \cite{fan2015tac}, we remove the artificial bounded parameter constraint imposed in previous works; see \cite{loh2015regularized} and \cite{fan2016estimation}. In the supplementary material, we present a nonasymptotic  Bahadur representation for the adaptive Huber estimator when $\delta\geq 1$, which provides a theoretical foundation for robust finite-sample inference.

The rest of the paper proceeds as follows. The rest of this section is devoted to related literature. In Section \ref{sec:2}, we revisit the Huber loss and robustification parameter, followed by the proposal of adaptive Huber regression in both low and high dimensions. We sharply characterize the nonasymptotic performance  of the proposed estimators in Section \ref{sec:3}. We describe the algorithm and implementation in Section~\ref{sec:3plus}. Section \ref{sec:4} is devoted to simulation studies and a real data application. In Section \ref{sec:ext}, we extend the methodology to allow possibly heavy-tailed covariates/predictors.
All the proofs are collected  in the supplemental material.

\subsection{\bf Related Literature} The terminology ``robustness" used in this paper describes how stable the method performs with respect to the tail-behavior of the data, which can be either sub-Gaussian/sub-exponential or Pareto-like \citep{DHJ2011, catoni2012challenging, devroye2015sub}. This is different from the conventional perspective of robust statistics under Huber's $\epsilon$-contamination model \citep{Huber1964}, for which a number of depth-based procedures have been developed since the groundbreaking work of John Tukey \citep{T1975}. Significant contributions have also been made  in \cite{Liu1990}, \cite{LPS1999},  \cite{ZS2000}, \cite{M2002} and \cite{MM2004}. We refer to \cite{CGR2017} for the most recent result and a literature review concerning this problem.

Our main focus is on the conditional mean regression in the presence of heavy-tailed and asymmetric errors, which automatically distinguishes our method from quantile-based robust regressions \citep{K2005, BC2011, W2013, fan2014adaptive, ZPH2015}. In general, quantile regression is biased towards estimating the mean regression coefficient unless  the error distributions are symmetric around zero. Another recent work that is  related to ours is \cite{ACL2017}.
They studied a general class of regularized empirical risk minimization procedures with a particular focus on Lipschitz losses, 
which includes the quantile, hinge and logistic losses. 
Different from all these work, our goal is to estimate the mean regression coefficients robustly.  The robustness is witnessed by a nonasymptotic analysis: the proposed estimators achieve sub-Gaussian deviation bounds when the regression errors have  only finite second moments. Asymptotically, our proposed estimators are fully efficient: they achieve the same efficiency as the ordinary least squares estimators.  

An important step towards estimation under heavy-tailedness has been made by \cite{catoni2012challenging}, whose focus is on estimating a univariate mean. Let $X$ be a real-valued random variable with mean $\mu = \EE(X)$ and variance $\sigma^2=\var (X)>0$, and assume that $X_1,\ldots, X_n$ are independent and identically distributed (i.i.d.) from $X$. For any prespecified exception probability $t>0$, Catoni constructs a robust mean estimator $\widehat\mu_{{\rm C}}(t)$ that deviates from the true mean $\mu$ logarithmically in $1/t$, that is,
\# \label{catoni.bound}
	 \PP\big[   |  \hat{\mu}_{{\rm C}}(t) - \mu | \leq t \sigma/  n^{1/2} \big] \geq 1- 2\exp(-c t^2),
\#
while the empirical mean deviates from the true mean only polynomially in $1/t^2$, namely subGaussian tails versus Cauchy tail in terms of $t$. Further, \cite{devroye2015sub} developed adaptive sub-Gaussian estimators that are independent of the prespecified exception probability.
Beyond mean estimation, \cite{brownlees2015empirical} extended Catoni's idea to study empirical risk minimization problems when the losses are unbounded. Generalizations of the univariate results to those for matrices, such as the covariance matrices, can be found in \cite{catoni2016pac}, \cite{minsker2016sub},  \cite{giulini2016robust} and \cite{fan2016estimation}.  \cite{fan2016estimation} modified Huber's procedure \citep{Huber1973} to obtain a robust estimator, which is concentrated around the true mean with exponentially high probability in the sense of \eqref{catoni.bound}, and also proposed a robust procedure for sparse linear regression with asymmetric and heavy-tailed errors.

\medskip
\noindent{\bf Notation}: We fix some notations that will be used throughout this paper. For any vector $\bu = ( u_1, \ldots, u_d)^\T \in \RR^d$ and $q \geq 1$, $\|\bu\|_q= (\sum_{j=1}^d | u_j|^q )^{1/q}$ is the $\ell_q$ norm. For any vectors $\bu , \bv \in \RR^d$, we write $\langle \bu, \bv \rangle = \bu^\T \bv$. Moreover, we let $\|\bu\|_0 = \sum_{j=1}^d 1( u_j \!\neq\! 0 )$ denote the number of nonzero entries of $\bu$, and set $\|\bu\|_\infty=\max_{1\leq j\leq d}| u_j |$. For two sequences of real numbers $\{ a_n \}_{n\geq 1}$ and $\{ b_n \}_{n\geq 1}$, $a_n \lesssim b_n$ denotes $a_n \leq C b_n$ for some constant $C>0$ independent of $n$, $a_n \gtrsim b_n$ if $b_n \lesssim a_n$, and $a_n \asymp b_n$ if $a_n \lesssim b_n$ and $b_n \lesssim a_n$. For two scalars, we use $a\wedge b=\min\{a, b\}$ to denote the minimum of $a$ and $b$. If $\Ab$ is an $m\times n$ matrix, we use $\| \Ab \|$ to denote its spectral norm, defined by $\| \Ab \| = \max_{ \bu \in \mathbb{S}^{n-1}} \| \Ab \bu \|_2$, where $\mathbb{S}^{n-1} = \{ \bu \in \RR^n : \|\bu \|_2 = 1 \}$ is the unit sphere in $\RR^n$. For an $n\times n$ matrix $\Ab$, we use $\lambda_{\max}(\Ab)$ and $\lambda_{\min}(\Ab)$ to denote the maximum and minimum eigenvalues of $\Ab$, respectively. For two $n\times n$ matrices $\Ab$ and $\Bb$, we write $\Ab \preceq \Bb$ if $\Bb-\Ab$ is positive semi-definite. For a function $f: \RR^d \to \RR $, we use $\nabla f \in \RR^d$ to denote its gradient vector  as long as it exists.

\section{Methodology}\label{sec:2}

We consider i.i.d. observations $(y_1 , \bx_1 ), \ldots, (y_n,\bx_n)$ that are generated from the following heteroscedastic regression model
\#
    y_i =  \langle \bx_i ,  \bbeta^* \rangle  + \varepsilon_i, ~\mbox{ with }~  \EE (\varepsilon_i | \bx_i)=0  ~\mbox{ and }~
    v_{i, \delta} = \EE \big( |\varepsilon_i|^{1+\delta} \big) < \infty.  \label{linear.model}
\#
Assuming that the second moments are bounded ($\delta=1$), the standard ordinary least squares (OLS) estimator, denoted by $\widehat\bbeta^{\textnormal{ols}}$,  admits a suboptimal polynomial-type deviation bound, and thus does not concentrate around $\bbeta^*$ tightly enough for large-scale simultaneous estimation and inference. The key observation that underpins this suboptimality of the OLS estimator is the sensitivity of quadratic loss to outliers \citep{Huber1973, catoni2012challenging}, while the Huber regression with a fixed tuning constant may lead to nonnegligible estimation bias. To overcome this drawback, we propose to employ the Huber loss with an adaptive robustification parameter to achieve robustness and (asymptotic) unbiasedness simultaneously.
We begin with the definitions of the Huber loss and the corresponding robustification parameter.

\begin{definition}[{\sf Huber Loss and Robustification Parameter}] \label{Huber.def}
The  Huber loss $\ell_\tau(\cdot)$ \citep{Huber1964} is defined as
$$
	\ell_\tau(x) =
	\left\{\begin{array}{ll}
	 x^2 /2 ,    & \mbox{if } |x | \leq \tau ,  \\
	\tau |x | -  \tau^2 /2 ,   &  \mbox{if }  |x | > \tau ,
	\end{array}  \right. 	
$$	
where $\tau>0$ is referred to as the {robustification parameter} that balances bias and robustness \citep{fan2016estimation}.
\end{definition}

  The loss function $\ell_\tau(x)$ is quadratic for small values of $x$, and becomes linear when $x$ exceeds $\tau$ in magnitude. The parameter $\tau$ therefore controls the blending of quadratic and $\ell_1$ losses, which can be regarded as two extremes of the Huber loss with $\tau=\infty$ and $\tau\rightarrow0$, respectively. Comparing with the least squares, outliers are down weighted  in the Huber loss. We will use the name, {\it adaptive Huber loss}, to emphasize the fact that the parameter $\tau$ should adapt to the sample size, dimension and moments  for a better tradeoff between bias and robustness. This distinguishes our framework from the classical setting. As $\tau \to \infty$ is needed to reduce the bias when the error distribution  is asymmetric, this loss is also called the RA-quadratic (robust approximation to quadratic) loss in \cite{fan2016estimation}.

Define the empirical loss function $\cL_\tau(\bbeta)= n^{-1}\sum_{i=1}^n \ell_{\tau}(y_i-  \langle \bx_i , \bbeta \rangle)$ for $\bbeta \in \RR^d$. The Huber estimator is defined through the following convex optimization problem:
\begin{align}
\hat{\bbeta}_\tau = \arg\min_{\bbeta\in \RR^d}   \cL_\tau(\bbeta).  \label{Huber.est}
\end{align}
In low dimensions, under the condition that $v_{\delta}= n^{-1} \sn \EE  (|\varepsilon_i |^{1+\delta} ) < \infty$ for some $\delta >0$, we will prove that $\hat{\bbeta}_\tau$ with $ \tau \asymp \min \{ v_\delta^{1/(1+\delta)}, v_1^{1/2}  \} \, n^{\max\{1/(1+\delta),1/2\}}$ (the first factor is kept in order to show its explicit dependence on the moment) achieves the  tight upper bound  $d^{1/2} \tau^{-(\delta\wedge 1)}\asymp  d^{1/2} n^{-\min\{\delta/(1+\delta), 1/2\}}$.  The phase transition at $\delta = 1$ can be easily observed (see Figure \ref{fig:1}).  
When higher moments exist ($\delta\geq 1$), robustification leads to a sub-Gaussian-type deviation inequality in the sense of \eqref{catoni.bound}. 

\begin{figure}[th!]
	\includegraphics[scale=.45]{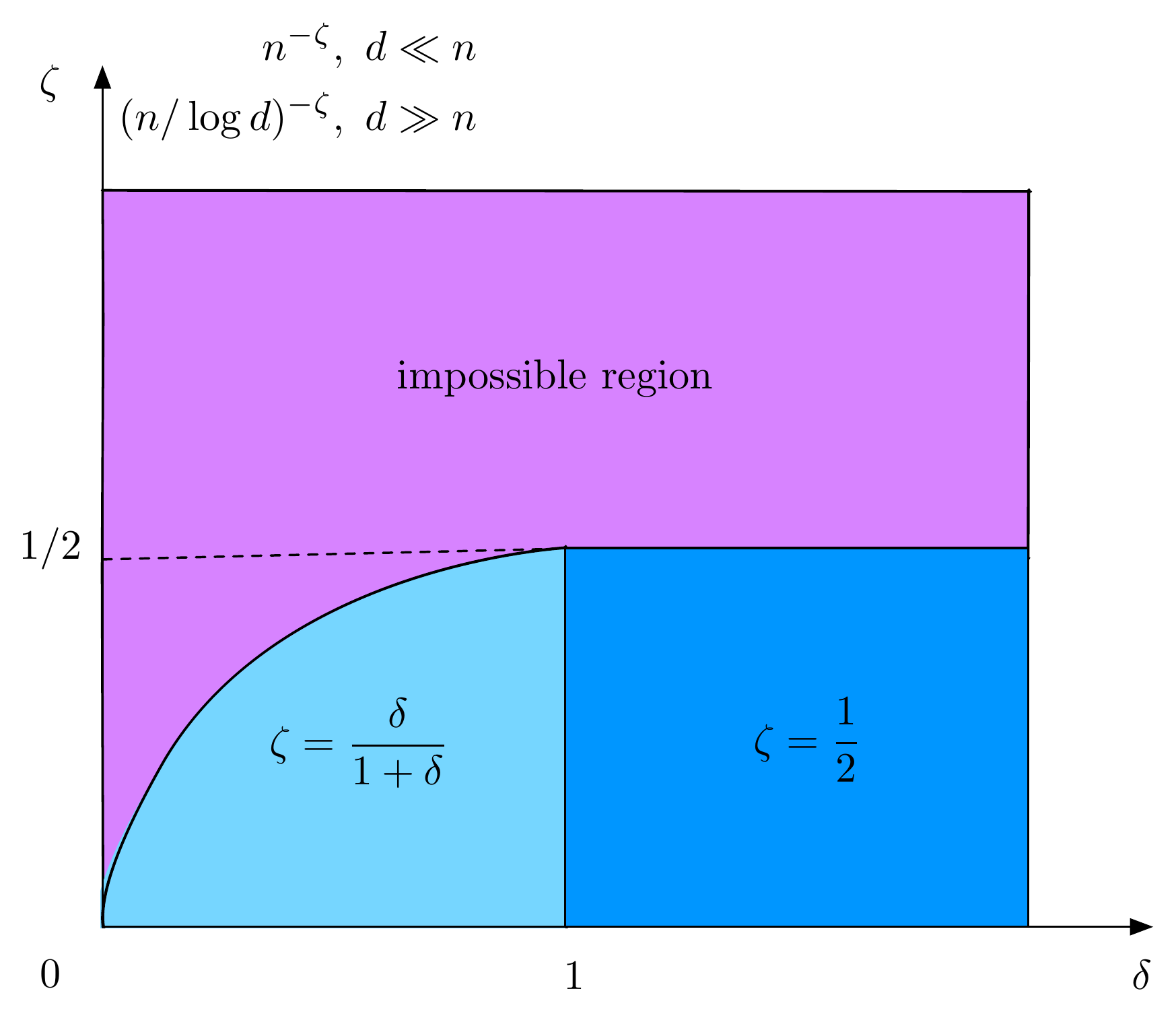}\centering\\
	\caption{\small Phase transition in terms of $\ell_2$-error for the adaptive Huber estimator. With fixed effective dimension,  $\|\widehat\bbeta_\tau-\bbeta^*\|_2\asymp n_{\textnormal{eff}}^{-\delta/(1+\delta)}$, when $0< \delta< 1$; $\|\widehat\bbeta_\tau-\bbeta^*\|_2\asymp n_{\textnormal{eff}}^{-1/2}$, when $\delta\geq 1$. Here $n_{\textnormal{eff}}$ is the effective sample size: $n_\textnormal{eff}=n$ in low dimensions while $n_\textnormal{eff}=n/\log d$ in high dimensions.  }
	\label{fig:1}
\end{figure}

In the high dimensional regime, we consider  the following regularized adaptive Huber regression with a different choice of the robustification parameter:
\begin{gather}
    \widehat\bbeta_{\tau ,\lambda} \in \arg\min_{\bbeta\in \RR^{d}} \big\{ \cL_\tau(\bbeta)\!+\!\lambda\|\bbeta\|_1  \big\}, \label{ssdr}
\end{gather}
where $ \tau \asymp \nu_\delta \{ n / (\log d) \}^{\max\{1/(1+\delta),1/2\}}$ and $\lambda\asymp \nu_\delta  \{ (\log d) /n\}^{\min\{\delta/(1+\delta),1/2\}}$ with $\nu_\delta= \min \{ v_\delta^{1/(1+\delta)} , v_1^{1/2}  \}.$
Let $s$ be the size of the true support $\cS  = \supp(\bbeta^*)$. We will show that the regularized Huber estimator achieves an upper bound that is of the order $s^{1/2}  \{ (\log d )/ {n} \}^{\min\{\delta/(1+\delta), 1/2\}}$ for estimating $\bbeta^*$ in $\ell_2$-error with high probability.

To unify the nonasymptotic upper bounds  in the  two different regimes, we define the {\it effective dimension}, $d_{\textnormal{eff}}$, to be $d$ in low dimensions and $s$ in high dimensions. In other words, $d_{\textnormal{eff}}$ denotes the number of nonzero  parameters of the problem. The {\it effective sample size}, $n_{\textnormal{eff}}$, is defined as $n_\normal{eff}=n$ and   $n_\normal{eff}=n/\log d$ in low and high dimensions, respectively.  We will establish a phase transition: when $\delta\geq 1$, the proposed estimator enjoys a sub-Gaussian concentration, while it only achieves a slower concentration when $0<\delta<1$. Specifically, we show that, for any $\delta\in (0, \infty)$, the proposed estimators with $ \tau \asymp \min \{ v_\delta^{1/(1+\delta)}, v_1^{1/2}  \}\, n_\textnormal{eff}^{\max\{1/(1+\delta),1/2\}}$ achieve the following  tight upper bound,  up to logarithmic factors:
\#\label{eq:phase.uni}
\big\|\widehat\bbeta_\tau-\bbeta^*\big\|_2\lesssim  d_{\textnormal{eff}}^{{1}/{2}}  \, n_\textnormal{eff}^{-\min\{\delta/(1+\delta), 1/2\}} ~~\textnormal{with high probability. }
\#
This finding is  summarized in Figure \ref{fig:1}.

\section{Nonasymptotic Theory}\label{sec:3}

\subsection{Adaptive Huber Regression with Increasing Dimensions}
\label{sec:3.1}

We begin with the adaptive Huber regression in the low dimensional regime. First, we provide an upper bound for the estimation bias of Huber regression.
We then establish the phase transition by establishing matching upper and lower bounds on the $\ell_2$-error.
The analysis is carried out under  both fixed and random designs. The results  under random designs are provided in the supplementary material.
We start with the following regularity condition.
\begin{cond} \label{ass:3.0}
The empirical Gram matrix $\Sb_n :=  n^{-1} \sn \bx_i \bx_i^\T$ is nonsingular. Moreover, there exist constants $c_l$ and $c_u$ such that $c_l\leq \lambda_{\min}(\Sb_n)\leq \lambda_{\max}(\Sb_n)\leq c_u$.
\end{cond}

For any $\tau>0$, $\hat{\bbeta}_\tau$ given in \eqref{Huber.est} is natural $M$-estimator of
\#  \label{approxi.parameter}
	\bbeta^*_\tau :=  \arg\min_{\bbeta \in \RR^d}  \EE  \{ \cL_\tau(\bbeta) \} =  \arg\min_{\bbeta \in \RR^d}  \frac{1}{n}\sn  \EE \{ \ell_\tau( y_i  -  \langle \bx_i ,  \bbeta \rangle ) \},
\#
where the expectation is taken over the regression errors. We call $\bbeta^*_\tau$ the {\it Huber regression coefficient}, which is possibly different from the vector of true parameters $\bbeta^*$.
 The estimation bias, measured by $\| \bbeta^*_\tau - \bbeta^* \|_2$, is a direct consequence of robustification and asymmetric error distributions. Heuristically, choosing a sufficiently large $\tau$ reduces bias at the cost of losing robustness (the extreme case of $\tau = \infty$ corresponds to the least squares estimator). Our first result shows how the magnitude of $\tau$ affects the bias $\| \bbeta^*_\tau - \bbeta^* \|_2$. Recall that $v_\delta=n^{-1}\sum_{i=1}^n v_{i,\delta}$ with $v_{i, \delta} = \EE  ( |\varepsilon_i|^{1+\delta}  )$.

\begin{proposition} \label{prop:error}
Assume Condition~\ref{ass:3.0} holds and that $v_\delta $ is finite for some $\delta >0$. Then, the vector $\bbeta^*_\tau$ of Huber regression coefficients satisfies
\# \label{approxi.error}
 \| \bbeta^*_\tau -  \bbeta^*   \|_2 	 \leq  2 c_l^{-{1}/{2}}v_\delta\tau^{-\delta}
\#
provided $\tau  \geq   (4 v_\delta \wt M^2 )^{1/(1+\delta)}$ for $0 < \delta < 1$ or $\tau \geq  (2v_1)^{1/2} \wt M$ for  $\delta \geq 1$, where $\wt M = \max_{1\leq i\leq n} \| \Sb_n^{-1/2} \bx_i \|_2.$  
\end{proposition}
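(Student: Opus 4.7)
The plan is to characterise $\bbeta^*_\tau$ through the first-order condition of the smooth convex population objective $g(\bbeta):=\EE\cL_\tau(\bbeta)$, and to bound the displacement $\Delta:=\bbeta^*_\tau-\bbeta^*$ by combining a truncation-bias bound on the gradient $\nabla g(\bbeta^*)$ with a local quadratic lower bound on $g$ along the segment $[\bbeta^*,\bbeta^*_\tau]$.

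Writing $\psi_\tau:=\ell_\tau'$ for the clipped-identity derivative, the algebraic identity $\psi_\tau(r)=r-\sgn(r)(|r|-\tau)_+$ together with $\EE\varepsilon_i=0$ produces the representation
\[
\nabla g(\bbeta^*)=\frac{1}{n}\sum_{i=1}^n b_i^0\,\bx_i,\qquad b_i^0:=\EE\,\sgn(\varepsilon_i)(|\varepsilon_i|-\tau)_+.
\]
The elementary inequality $(|x|-\tau)_+\le|x|^{1+\delta}/\tau^\delta$ yields the deterministic truncation-bias bound $|b_i^0|\le v_{i,\delta}/\tau^\delta$, and a short projection argument in the Gram structure controls $\|\nabla g(\bbeta^*)\|_{\Sb_n^{-1}}$ by $v_\delta/\tau^\delta$, up to an absolute constant.

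For the Hessian side, a mean-value argument applied to $\bbeta\mapsto\nabla g(\bbeta)$ on $[\bbeta^*,\bbeta^*_\tau]$ together with $\nabla g(\bbeta^*_\tau)=0$ yields $-\nabla g(\bbeta^*)=\bar{\mathbf{H}}\,\Delta$, where $\bar{\mathbf{H}}:=n^{-1}\sum_i p_i\,\bx_i\bx_i^\T$ with $p_i:=\int_0^1\PP(|\varepsilon_i-s\langle\bx_i,\Delta\rangle|\le\tau)\,ds$. Markov's inequality combined with $(a+b)^{1+\delta}\le 2^\delta(a^{1+\delta}+b^{1+\delta})$ and the design inequality $|\langle\bx_i,\Delta\rangle|\le\wt M\,\|\Delta\|_{\Sb_n}$ produces a lower bound on $p_i$ in terms of $\|\Delta\|_{\Sb_n}/\tau$; the stated threshold on $\tau$ is exactly what forces $p_i\ge 1/2$, whence $\bar{\mathbf{H}}\succeq\tfrac{1}{2}\Sb_n$. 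Pairing the identity with $\Delta$ and applying Cauchy--Schwarz then gives $\tfrac{1}{2}\|\Delta\|_{\Sb_n}^2\le\Delta^\T\bar{\mathbf{H}}\Delta=-\langle\nabla g(\bbeta^*),\Delta\rangle\le\|\nabla g(\bbeta^*)\|_{\Sb_n^{-1}}\|\Delta\|_{\Sb_n}$, so $\|\Delta\|_{\Sb_n}\le 2v_\delta/\tau^\delta$, and Condition~\ref{ass:3.0} upgrades this to $\|\Delta\|_2\le 2c_l^{-1/2}v_\delta/\tau^\delta$.

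The delicate point, and the main obstacle, is the circular dependence between the Hessian lower bound (which requires $\|\Delta\|_{\Sb_n}$ already to be small) and the displacement bound (which requires that lower bound). This is resolved by a localisation argument: either a continuity/bootstrap from the reference point $\Delta=0$ at $\bbeta^*$, or a fixed-point argument on the ball $\{\|\Delta\|_{\Sb_n}\le\tau/(2\wt M)\}$, exploiting the uniqueness of the minimiser of the strictly convex $g$ to certify that $\bbeta^*_\tau$ lies in that ball. The two thresholds in the hypothesis reflect a choice of truncation exponent: for $\delta\ge 1$ one may use the sharper variant $(|x|-\tau)_+\le x^2/\tau$ together with $(a+b)^2\le 2(a^2+b^2)$, so that only the second moment $v_1$ enters the condition $\tau\ge(2v_1)^{1/2}\wt M$; for $\delta\in(0,1)$ only the $(1+\delta)$-th moment is available, which forces the threshold $\tau\ge(4v_\delta\wt M^2)^{1/(1+\delta)}$.
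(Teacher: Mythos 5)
Your proposal is correct in outline and shares the skeleton of the paper's argument --- first-order condition for $\bbeta^*_\tau$, the truncation bound $|\EE\,\psi_\tau(\varepsilon_i)|\leq v_{i,\delta}\tau^{-\delta}$ on the population gradient at $\bbeta^*$, a mean-value identity, a lower bound on the intermediate Hessian by $\tfrac{1}{2}\Sb_n$, and Cauchy--Schwarz --- but it diverges at the one step that matters. You lower-bound $p_i=\int_0^1\PP(|\varepsilon_i-s\langle\bx_i,\bDelta\rangle|\leq\tau)\,ds$ by Markov's inequality applied to the \emph{shifted} residual, which forces you to control $|\langle\bx_i,\bDelta\rangle|$, hence $\|\Sb_n^{1/2}\bDelta\|_2$, in advance; you correctly flag the resulting circularity and propose to break it with a localisation/fixed-point argument on the ball $\{\|\Sb_n^{1/2}\bDelta\|_2\leq\tau/(2\wt M)\}$. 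That route can be made rigorous (the paper runs exactly such a localisation for the \emph{empirical} estimator in the proof of Theorem~1), though the constants need care: on your ball the elementary inequality $(a+b)^{1+\delta}\leq 2^\delta(a^{1+\delta}+b^{1+\delta})$ only yields $p_i\geq\tfrac{1}{2}-O\bigl(v_{i,\delta}\tau^{-(1+\delta)}\bigr)$, so either the radius or the target constant in the Hessian bound must be adjusted. The paper sidesteps the circularity entirely with a self-bounding convexity trick: since $\bbeta^*_\tau$ minimises the convex function $h(\bbeta)=\EE\,\cL_\tau(\bbeta)$, the intermediate point $\tilde\bbeta_1$ on the segment satisfies $h(\tilde\bbeta_1)\leq h(\bbeta^*)\leq v_\delta\tau^{1-\delta}$, and because every residual exceeding $\tau$ in magnitude contributes at least $\tau^2/2$ to the population Huber risk, this gives $n^{-1}\sum_{i=1}^n\PP(|\tilde\varepsilon_i|>\tau)\leq 2v_\delta\tau^{-1-\delta}$ with no reference whatsoever to the size of $\bDelta$; the threshold $\tau\geq(4v_\delta\wt M^2)^{1/(1+\delta)}$ then delivers $\nabla^2h(\tilde\bbeta_1)\succeq\tfrac{1}{2}\Sb_n$ in one line. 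The paper's argument thus buys a shorter, non-circular proof with clean constants by exploiting the linear tail of the Huber loss, while yours is more generic --- it would transfer to robust losses lacking that self-bounding structure --- at the price of an extra bootstrap.
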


The total estimation error $\|\widehat\bbeta_\tau-\bbeta^*\|_2$ can therefore be decomposed  into two parts
\$
\underbrace{\big\|\widehat\bbeta_\tau-\bbeta^*\big\|_2}_{\textnormal{total error}}\leq \underbrace{\big\|\widehat\bbeta_\tau-\bbeta_\tau^*\big\|_2}_{\textnormal{estimation error}}+\underbrace{\big\|\bbeta_\tau^*-\bbeta^*\big\|_2}_{\textnormal{approximation bias}},
\$
where the approximation bias is of order $\tau^{-\delta}$. A large $\tau$ reduces the bias but compromises the degree of robustness. Thus an optimal estimator is the one with  $\tau$ diverging at a certain rate to achieve the optimal tradeoff between  estimation error and approximation bias. Our next result presents nonasymptotic upper bounds on the $\ell_2$-error with an exponential-type exception probability, when $\tau$ is properly tuned. Recall that $\nu_\delta = \min\{ v_\delta^{1/(1+\delta)}, v_1^{1/2}\}$ for any  $\delta>0$.

\begin{theorem}[\sf Upper Bound]\label{thm:ld}
Assume Condition~\ref{ass:3.0} holds and $ v_{\delta} <\infty$ for some $\delta>0$. Let $L = \max_{1\leq i\leq n} \|  \bx_i \|_\infty$ and assume $n \geq C(L, c_l ) d^2 t$ for some $C(L, c_l)>0$ depending only on $L$ and $c_l$. Then, for any $t >0$ and $\tau_0 \geq  \nu_\delta$, the estimator $ \hat{\bbeta}_\tau$ with $\tau = \tau_0 ( n /t )^{\max\{1/(1+\delta),1/2\}}$ satisfies the bound
\# \label{L2.hatbeta}
	  \big\|  \hat\bbeta_\tau  -   \bbeta^* \big\|_2  \leq 4  c_l^{-1} L \tau_0  \, d^{1/2}  \bigg( \frac{t}{n} \bigg)^{\min\{\delta/(1+\delta),1/2\}}
\#
with probability at least $1- (2d+1)e^{-t}$.
\end{theorem}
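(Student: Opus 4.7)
The plan is to split the error by the triangle inequality
\[
\|\hat\bbeta_\tau - \bbeta^*\|_2 \leq \|\hat\bbeta_\tau - \bbeta^*_\tau\|_2 + \|\bbeta^*_\tau - \bbeta^*\|_2,
\]
dispose of the approximation bias using Proposition~\ref{prop:error} (after checking that the prescribed $\tau = \tau_0(n/t)^{\max\{1/(1+\delta),1/2\}}$ satisfies its hypotheses under $n \geq C(L,c_l)d^2 t$, in which case $v_\delta \tau^{-\delta}$ is already dominated by the target rate), and focus the main effort on the stochastic estimation error $\|\hat\bbeta_\tau - \bbeta^*_\tau\|_2$. Since $\cL_\tau$ is convex and $\hat\bbeta_\tau$ is its global minimizer, it suffices to exhibit a radius $r$ for which $\cL_\tau(\bbeta) > \cL_\tau(\bbeta^*_\tau)$ holds uniformly on the sphere $\{\bbeta:\|\bbeta-\bbeta^*_\tau\|_2 = r\}$; by convexity every minimizer is then forced into the open ball. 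Writing the Bregman decomposition $\cL_\tau(\bbeta) - \cL_\tau(\bbeta^*_\tau) = \langle \nabla\cL_\tau(\bbeta^*_\tau),\bbeta-\bbeta^*_\tau\rangle + D_\tau(\bbeta,\bbeta^*_\tau)$ with $D_\tau \geq 0$ by convexity, the task reduces to (i) an upper bound on the score and (ii) a local lower bound on $D_\tau$.

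For (i), the score $\nabla\cL_\tau(\bbeta^*_\tau) = -n^{-1}\sum_{i=1}^n \psi_\tau(y_i - \langle\bx_i,\bbeta^*_\tau\rangle)\bx_i$, where $\psi_\tau := \ell_\tau'$ is $\tau$-bounded, is a sum of independent mean-zero vectors by the first-order condition defining $\bbeta^*_\tau$. Each coordinate is bounded by $L\tau$ in magnitude, and its variance is at most $L^2 \tau^{1-\delta} v_\delta$ when $0 < \delta < 1$ (using $\psi_\tau(x)^2 \leq \tau^{1-\delta}|x|^{1+\delta}$) and at most $L^2 v_1$ when $\delta \geq 1$. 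A coordinate-wise Bernstein inequality with a union bound over $d$ coordinates then yields
\[
\|\nabla\cL_\tau(\bbeta^*_\tau)\|_\infty \lesssim L\tau_0\,(t/n)^{\min\{\delta/(1+\delta),1/2\}}
\]
with probability at least $1 - 2de^{-t}$, the prescribed $\tau$ being precisely the value that balances the sub-Gaussian and sub-exponential Bernstein terms. Passing to $\ell_2$ via $\|\cdot\|_2 \leq d^{1/2}\|\cdot\|_\infty$ loses only a $\sqrt{d}$ factor, matching the form of the target bound.

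The main obstacle is (ii), a uniform lower bound $D_\tau(\bbeta,\bbeta^*_\tau) \geq (c_l/4)\|\bbeta - \bbeta^*_\tau\|_2^2$ over a ball of radius $r$. The Huber Hessian $n^{-1}\sum_i \mathbf{1}(|y_i - \langle\bx_i,\bbeta\rangle|\leq\tau)\bx_i\bx_i^\T$ degenerates where residuals exceed $\tau$, so a naive appeal to $\lambda_{\min}(\Sb_n) \geq c_l$ fails. Following the localized analysis of \cite{fan2015tac}, I would lower-bound the indicator by $\mathbf{1}(|\varepsilon_i|\leq\tau/2)\mathbf{1}(|\langle\bx_i,\bbeta-\bbeta^*\rangle|\leq\tau/2)$: the first factor equals $1$ with overwhelming probability by Markov's inequality since $\tau$ is large, while the second holds deterministically whenever $r \lesssim \tau/L$ and the bias $\|\bbeta^*_\tau - \bbeta^*\|$ is small, both of which are ensured by the choice of $\tau$ and Proposition~\ref{prop:error}. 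The truncated Hessian then dominates $(c_l/2)\Sb_n$ up to an empirical-process fluctuation whose uniform control over the ball requires precisely the sample-size hypothesis $n \geq C(L,c_l)d^2 t$ and is handled by a standard covering/symmetrization argument, contributing the extra $+1$ in the $(2d+1)e^{-t}$ exception probability. Choosing $r = 4c_l^{-1}L\tau_0 d^{1/2}(t/n)^{\min\{\delta/(1+\delta),1/2\}}$ then makes $D_\tau$ strictly dominate $|\langle\nabla\cL_\tau(\bbeta^*_\tau),\bbeta-\bbeta^*_\tau\rangle|$ on the sphere, confining $\hat\bbeta_\tau$ inside the ball and, together with the bias bound, delivering \eqref{L2.hatbeta}.
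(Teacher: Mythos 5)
Your proposal is correct, but it takes a genuinely different route from the paper. The paper never invokes Proposition~\ref{prop:error} in this proof: it works entirely around $\bbeta^*$, localizes via the interpolated estimator $\hat\bbeta_{\tau,\eta}=\bbeta^*+\eta(\hat\bbeta_\tau-\bbeta^*)$ and the symmetrized Bregman divergence of Lemma~\ref{lm01} (equivalent in effect to your sphere argument), lower-bounds the Hessian on a ball around $\bbeta^*$ by the deterministic domination $1(|\langle\bx_i,\bbeta-\bbeta^*\rangle|>\tau/2)\leq (2/\tau)^2\langle\bx_i,\bbeta-\bbeta^*\rangle^2$ plus a single Hoeffding bound on $n^{-1}\sum_i 1(|\varepsilon_i|>\tau/2)$ (no covering or symmetrization is needed; that is where the extra $+1$ in $(2d+1)e^{-t}$ comes from), and controls the score $\nabla\cL_\tau(\bbeta^*)=-n^{-1}\sum_i\psi_\tau(\varepsilon_i)\bx_i$ --- which is \emph{not} mean-zero --- by a Catoni-type exponential inequality built from $-\log(1-u+|u|^{1+\delta})\leq\psi_1(u)\leq\log(1+u+|u|^{1+\delta})$, so that the truncation bias is absorbed directly into the moment-generating-function bound via $\EE\varepsilon_i=0$. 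You instead route through $\bbeta^*_\tau$, which makes the score centered (strictly speaking, only the \emph{sum} $\sum_i\EE\{\psi_\tau(y_i-\langle\bx_i,\bbeta^*_\tau\rangle)\}\bx_i$ vanishes by the population first-order condition, not each summand, but Bernstein applies after centering each term, so this is only a wording issue), apply coordinate-wise Bernstein, and add the bias back with Proposition~\ref{prop:error}. Your decomposition is more modular and makes the bias--robustness tradeoff explicit, at the cost of some extra bookkeeping the paper avoids: your variance proxies involve the $(1+\delta)$-th moments of the shifted residuals $\varepsilon_i+\langle\bx_i,\bbeta^*-\bbeta^*_\tau\rangle$ rather than of $\varepsilon_i$, and your curvature bound is centered at $\bbeta^*_\tau$, so you must check that the ball of radius $r$ around $\bbeta^*_\tau$ sits inside the region where the truncated Hessian is controlled. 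Both routes balance the Bernstein (resp.\ Catoni) terms at the same $\tau$ and deliver the same rate, so the argument goes through.
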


\begin{remark}
It is worth mentioning that the proposed robust estimator depends on  the unknown parameter $v_\delta^{1/(1+\delta)}$.
Adaptation to the unknown moment is indeed another important problem. In Section~\ref{sec:4}, we suggest a simple cross-validation scheme for choosing $\tau$ with desirable numerical performance. A general adaptive construction of $\tau$ can be obtained via  Lepski's method \citep{Lepski1991}, which is more challenging due to  unspecified constants.
In the supplementary material, we discuss a variant of Lepski's method and establish its theoretical guarantee.

\end{remark}

\begin{remark} We do not assume $\EE( | \varepsilon_i |^{1+\delta} | \bx_i)$ to be a constant, and hence the proposed method accommodates heteroscedastic regression models. For example, $\varepsilon_i$ can take the form of $\sigma(\bx_i)  v_i$, where $\sigma: \RR^d \to (0,\infty)$ is a positive function, and $v_i$ are random variables satisfying $\EE( v_i ) = 0$ and $\EE ( | v_i |^{1+\delta} ) < \infty$.
\end{remark}

\begin{remark}
We need the scaling condition to go roughly as $n\gtrsim d^2t$ under fixed designs. With random designs, we show that the scaling condition can be relaxed to $n\gtrsim d+t$. Details are given in the supplementary material.
\end{remark}

Theorem~\ref{ass:3.0} indicates that, with only bounded $(1+\delta)$-th moment,  the adaptive Huber estimator achieves the upper bound $d^{1/2}n^{-\min\{\delta/(1+\delta), 1/2\}}$, up to a logarithmic factor, by setting $t =\log(nd)$. A natural question is whether  the upper bound in \eqref{L2.hatbeta} is optimal. To address this,
we provide a matching lower bound up to a logarithmic factor. 
Let $\cP_{\delta}^{v_\delta}$ be the class of all distributions on $\RR$ whose $(1+\delta)$-th absolute central moment equals $v_\delta.$ Let  $\Xb = (\bx_1, \ldots, \bx_n)^\T=(\bx^1,\ldots, \bx^d) \in \RR^{n\times d}$ be the design matrix and $\cU_n=\{\bu: \bu\in \{-1,1\}^n\}.$
\begin{theorem}[{\sf Lower Bound}]\label{thm:ld:mini}
Assume that the regression errors $\varepsilon_i$ are i.i.d. from a distribution in $ \cP_{\delta}^{v_\delta}$ with $\delta>0$. Suppose there exists a $\bu\in \cU_n$ such that $\| n^{-1} \Xb^\T \bu \|_{\min}\geq \alpha$ for some $\alpha>0$. Then, for any $t\in [ 0, n/2]$ and  any estimator $\widehat\bbeta= \hat{\bbeta}(y_1,\ldots, y_n, t)$ possibly depending on $t$, we have
\$
\sup_{\PP \in\cP_{\delta}^{v_\delta} }\PP \Bigg[ \big\|\widehat\bbeta-\bbeta^*\big\|_2\geq     \alpha  c_u^{-1}   \nu_\delta \,d^{1/2}   \bigg( \frac{t}{n} \bigg)^{\min\{\delta/(1+\delta), 1/2\}}\Bigg] \geq  \frac{e^{-2t}}{2} ,
\$
where $c_u \geq   \lambda_{\max}(\Sb_n)$.
\end{theorem}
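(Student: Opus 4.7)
The plan is to apply Le Cam's two-point method. Consider two candidate pairs $(\bbeta_j, P_j)_{j\in\{1,2\}}$ with $\bbeta_j \in \RR^d$ and $P_j \in \cP_\delta^{v_\delta}$, and let $\PP_j$ denote the joint law of $(y_1,\ldots,y_n)$ under scenario $j$. A standard two-point argument, combined with the triangle inequality applied to the balls $B(\bbeta_j,\|\bbeta_1-\bbeta_2\|_2/2)$, gives
\begin{align*}
\max_{j\in\{1,2\}} \PP_j\!\left[\|\widehat\bbeta - \bbeta_j\|_2 \geq \tfrac{1}{2}\|\bbeta_1 - \bbeta_2\|_2\right] \geq \tfrac{1}{2}\!\int \min(d\PP_1, d\PP_2).
\end{align*}
It therefore suffices to exhibit two hypotheses whose parameter separation is at least $2\rho$, where $\rho$ denotes the target lower-bound radius of the theorem, and for which $\int\min(d\PP_1,d\PP_2) \geq e^{-2t}$. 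The phase transition at $\delta=1$ forces qualitatively different constructions in the two regimes.

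For $\delta \geq 1$, I would take $P_1 = P_2 = N(0,\sigma^2)$ with $\sigma^2$ calibrated so that the $(1+\delta)$-th central moment equals $v_\delta$, so that $\sigma\asymp\nu_\delta$. For a shift $\Delta := \bbeta_1-\bbeta_2$, a direct computation gives
\begin{align*}
\mathrm{KL}(\PP_1\|\PP_2) = \frac{\|\Xb\Delta\|_2^2}{2\sigma^2} = \frac{n\,\Delta^\T\Sb_n\Delta}{2\sigma^2} \leq \frac{n c_u \|\Delta\|_2^2}{2\sigma^2}.
\end{align*}
The hypothesis $\|n^{-1}\Xb^\T\bu\|_{\min}\geq\alpha$, together with $\lambda_{\max}(\Sb_n)\leq c_u$ and $\|\bu\|_2^2=n$, forces $\alpha^2 d \leq c_u$, which is exactly the room needed to take $\|\Delta\|_2 = 2\rho = 2\alpha c_u^{-1}\nu_\delta d^{1/2}(t/n)^{1/2}$ while keeping $\mathrm{KL}\leq 2t$ (up to an absolute constant that can be absorbed into $\alpha$). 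The Bretagnolle--Huber inequality $\int\min(d\PP_1,d\PP_2)\geq\tfrac{1}{2}e^{-\mathrm{KL}}$ then closes the TV bound.

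For $0<\delta<1$, the Gaussian-plus-KL route is too weak (it only recovers the $(t/n)^{1/2}$ rate), so I would instead use a two-atom construction in the spirit of \cite{devroye2015sub}. Set $q = t/n$ and design $P_1, P_2$ so that each puts mass $1-q$ on a ``typical'' atom and mass $q$ on a ``tail'' atom; pinning the mean to zero and the $(1+\delta)$-th central moment to $v_\delta$ forces the tail atom at magnitude $\asymp \nu_\delta q^{-1/(1+\delta)}$ and the typical atom at magnitude $\asymp \nu_\delta(t/n)^{\delta/(1+\delta)}$. One then selects $\Delta$ and the atom positions so that the typical atoms of $P_1$ and $P_2$, after translation by $\langle\bx_i,\bbeta_1\rangle$ and $\langle\bx_i,\bbeta_2\rangle$ respectively, coincide at every row $i$. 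On the ``no outlier'' event, which has probability at least $(1-q)^n \geq e^{-2t}$ under either law, the observations $(y_i)_{i=1}^n$ under $\PP_1$ and $\PP_2$ can then be coupled to agree sample-by-sample, producing $\int\min(d\PP_1,d\PP_2) \geq e^{-2t}$ as required.

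The main technical obstacle is the alignment step in the heavy-tail regime: one must produce a $\Delta$ of magnitude $\asymp \alpha c_u^{-1}\nu_\delta d^{1/2}(t/n)^{\delta/(1+\delta)}$ together with atom locations for $P_1,P_2$ whose typical-atom offsets are absorbed exactly by the design-induced shifts $\langle\bx_i,\Delta\rangle$ simultaneously across all $i$. The hypothesis $\|n^{-1}\Xb^\T\bu\|_{\min}\geq\alpha$ is what provides enough structure for this: an appropriate rescaling of $\Sb_n^{-1}\mathrm{sgn}(n^{-1}\Xb^\T\bu)$ (or a closely related vector) yields a $\Delta$ of the target magnitude whose image under $\Xb$ is aligned with $\bu$, with the multiplicative factor $\alpha c_u^{-1}$ emerging from the design-matrix calculation. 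Once this alignment is in place, the remainder---the two-point inequality and the elementary bound $(1-q)^n \geq e^{-2t}$ for $q\leq 1/2$---is routine.
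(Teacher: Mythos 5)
Your proposal is correct, and for the substantive regime $0<\delta<1$ it is essentially the paper's own argument: the paper takes two-atom laws supported on $\{0,\pm c\}$ with mass $\gamma=t/n$ on the outlier atom, couples the two samples so that they coincide on the no-outlier event of probability $(1-\gamma)^n\geq e^{-2t}$, and realizes your ``alignment step'' by setting $\Xb\bbeta_1^*=c\gamma\,\bu$ and $\bbeta_2^*=-\bbeta_1^*$, so that $\bbeta_1^*$ is exactly the rescaled $\Sb_n^{-1}(n^{-1}\Xb^\T\bu)$ you anticipate, with $\|\bbeta_1^*\|_2\geq c\gamma\,\alpha d^{1/2}/c_u$ and $c\gamma\gtrsim \nu_\delta (t/n)^{\delta/(1+\delta)}$ forced by pinning the $(1+\delta)$-th moment to $v_\delta$. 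Where you genuinely diverge is the regime $\delta\geq 1$: the paper reuses the same two-atom construction (invoking $v_1<\infty$ to land on the $(t/n)^{1/2}$ rate), whereas you switch to a Gaussian pair with a KL computation. Your Gaussian route is valid, and your observation that $\alpha^2 d\leq c_u$ is forced by the hypotheses is precisely what makes the KL budget close; its cost is only in the constant, since Bretagnolle--Huber combined with the two-point reduction yields $e^{-2t}/4$ rather than the stated $e^{-2t}/2$, so to match the theorem exactly you would need either the exact Gaussian total-variation formula or the paper's uniform two-atom construction, which gives $\int\min(d\PP_1,d\PP_2)\geq(1-\gamma)^n\geq e^{-2t}$ directly for every $\delta>0$. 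What the paper's single construction buys is this uniformity across all $\delta$; what your split buys is a cleaner, more standard information-theoretic argument in the light-tailed regime.
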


Theorem~\ref{thm:ld:mini} reveals that root-$n$ consistency with exponential concentration is impossible when $\delta \in (0,1)$.  It widens the phenomenon observed in Theorem~3.1 in \cite{devroye2015sub} for estimating a mean.  In addition to the eigenvalue assumption, we need to assume that there exists a $\bu\in \cU_n\subseteq \RR^n$ such that the minimum angle between $n^{-1}\bu$ and $\bx^j$ is non-vanishing.
This assumption comes from the intuition that the linear subspace spanned by $\bx^j$ is at most of rank $d$ and thus cannot span the whole space $\RR^n.$ This assumption naturally holds in the univariate case where $\Xb=(1, \ldots, 1)^\T$ and we can take $\bu=(1,\ldots, 1)^\T $ and $\alpha=1$.   More generally,   $\|\Xb^\T\bu/n\|_{\min}=\min \{ |\bu^\T \bx^1 |/n,\ldots,  |\bu^\T \bx^d |/n \}$.  Taking $|\bu^\T \bx^1|/n $ for an example, since $\bu\in \{-1,+1\}^n$, we can assume that each coordinate of $\bx^1$ is positive.  In this case, $\bu^\T\bx^1/n= \sn |x^1_i|/n\geq \min_i {|x_i^1|}$, which is strictly positive with probability one, assuming $\bx^1$ is drawn from a continuous distribution.  %

Together, the upper and lower bounds show that the adaptive Huber estimator achieves near-optimal  deviations. Moreover, it indicates that the Huber estimator with an adaptive $\tau$  exhibits a sharp phase transition: when $\delta\geq 1$, $\widehat\bbeta_\tau$ converges to $\bbeta^*$ at the parametric rate $n^{-1/2}$, while only a slower rate of order $n^{-\delta/(1+\delta)}$ is available when the second moment does not exist.

 \begin{remark}
We provide a parallel analysis under random designs in the supplementary material. Beyond the nonasymptotic  deviation bounds, we also prove a nonasymptotic Bahadur representation, which establishes a linear approximation of the nonlinear robust estimator. This result paves the way for future research on conducting statistical inference and constructing confidence sets under heavy-tailedness.  Additionally, the proposed estimator achieves full efficiency: it is as efficient as the ordinary least squares estimator asymptotically, while the robustness is characterized via nonasymptotic performance.
 \end{remark}

\subsection{Adaptive Huber Regression in High Dimensions}

In this section, we study the regularized adaptive Huber estimator in high dimensions where $d$ is allowed to grow with the sample size $n$ exponentially. The analysis is carried out under fixed designs, and results for random designs are again provided in the supplementary material. We start with  a modified version of the  localized restricted eigenvalue introduced by \cite{fan2015tac}.  Let $\Hb_\tau(\bbeta) = \nabla^2\cL_\tau(\bbeta)$ denote the Hessian matrix. Recall that $\cS = {\rm supp}(\bbeta^*)\subseteq \{ 1, \ldots, d\}$ is the true support set  with $|\cS|=s$.

\begin{definition}[\sf Localized Restricted Eigenvalue, LRE]\label{lre}
	The localized restricted eigenvalue of $\Hb_\tau$ is defined as
\$\kappa_+(m,\gamma,r)=\sup \Big\{{ \langle \bu,  \Hb_\tau(\bbeta)   \bu \rangle }:
     ( \bu ,\bbeta ) \in \mathcal{C}(m,\gamma,r) \Big\},\\
\kappa_-(m,\gamma,r)=\inf \Big\{{ \langle \bu,   \Hb_\tau(\bbeta) \bu \rangle }:
     ( \bu ,\bbeta )\in \mathcal{C}(m,\gamma,r)   \Big\},
\$
where $\mathcal{C}(m,\gamma,r) := \{  ( \bu ,\bbeta  )\in  \mathbb{S}^{d-1} \times \RR^d : \forall J \subseteq \{ 1, \ldots, d\}~{\textnormal{satisfying}}~
S\subseteq J, |J|\leq m, \| \bu_{J^c}\|_1\leq \gamma \| \bu_{J} \|_1, \|\bbeta-\bbeta^*\|_1\leq r \}$ is a local  $\ell_1$-cone.
\end{definition}

The LRE is defined in a local neighborhood of  $\bbeta^*$ under  $\ell_1$-norm. This facilitates our proof, while \cite{fan2015tac} use the $\ell_2$-norm.
\begin{cond}\label{con:lre}
 $\Hb_\tau$ satisfies the localized restricted eigenvalue condition $\textnormal{LRE}(k, \gamma, r)$, that is, $ \kappa_l \leq\kappa_-(k,\gamma, r)\leq \kappa_+(k,\gamma, r)\leq \kappa_u $ for some constants $\kappa_u, \kappa_l >0$.
\end{cond}

The condition above is  referred to as the {LRE condition} \citep{fan2015tac}. It is a unified condition for studying generalized loss functions, whose Hessians may possibly depend on $\bbeta$. For Huber loss,  Condition \ref{con:lre} also involves the observation noise. The following  definition concerns the restricted eigenvalues of $\Sb_n$ instead of $\Hb_\tau$. 

\begin{definition}[\sf Restricted Eigenvalue, RE]\label{re}
The restricted maximum and minimum eigenvalues of $\Sb_n$ are defined respectively as
\$
\rho_+(m,\gamma)&=\sup_{ \bu }\big\{{ \langle \bu,  \Sb_n  \bu \rangle }:
      \bu  \in \mathcal{C}(m,\gamma) \big\}, \\
  \rho_-(m,\gamma)&=\inf_{ \bu } \big\{{  \langle \bu,  \Sb_n  \bu \rangle }:
     \bu \in \mathcal{C}(m,\gamma)   \big\},
\$
where $\mathcal{C}(m,\gamma) := \{  \bu \in  \mathbb{S}^{d-1} : \forall J \subseteq \{ 1, \ldots, d\}~{\rm satisfying}~S\subseteq J, |J|\leq m, \| \bu_{J^c}\|_1\leq \gamma \| \bu_{J}\|_1 \}$.
\end{definition}

\begin{cond}\label{con:re}
$\Sb_n$ satisfies the restricted eigenvalue condition $\textnormal{RE}(k, \gamma)$, that is, $  \kappa_l \leq\rho_-(k,\gamma)\leq \rho_+(k,\gamma)\leq \kappa_u $ for some constants $\kappa_u, \kappa_l >0$.
\end{cond}

To make Condition \ref{con:lre} on $\Hb_\tau$ practically useful, in what follows,  we show that Condition \ref{con:re} implies Condition \ref{con:lre} with high probability. As before, we write $v_\delta=n^{-1}\sum_{i=1}^nv_{i,\delta}$ and $L=\max_{1\leq i\leq n} \| \bx_i \|_{\infty}$.
\begin{lemma}\label{lemma:hd:lm1}
Condition \ref{con:re} implies Condition \ref{con:lre} with high probability: if $0< \kappa_l \leq \rho_-(k,\gamma)\leq \rho_+(k, \gamma)\leq \kappa_u<\infty $ for some $k\geq 1$ and $\gamma>0$, then it holds with probability at least $1-e^{-t}$ that, $0<\kappa_l /2 \leq \kappa_-(k,\gamma , r)\leq \kappa_+(k, \gamma, r)\leq \kappa_u<\infty$ provided  $\tau \geq \max\{  8L r , c_1 (L^2 k v_\delta )^{1/(1+\delta)}\}$ and $n \geq c_2 L^4  k^2 t$, where $c_1, c_2>0$ are constants depending only on $(\gamma, \kappa_l )$.

\end{lemma}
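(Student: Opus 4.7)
The plan is to decompose the Hessian as $\Hb_\tau(\bbeta)=\Sb_n-R_\tau(\bbeta)$ with
\[
R_\tau(\bbeta):=\frac{1}{n}\sum_{i=1}^n\mathbf{1}\bigl(|y_i-\langle \bx_i,\bbeta\rangle|>\tau\bigr)\bx_i\bx_i^\T\succeq \mathbf{0},
\]
using the a.e.\ identity $\ell_\tau''(x)=\mathbf{1}(|x|\leq \tau)$. The upper bound is then immediate: $\Hb_\tau(\bbeta)\preceq \Sb_n$ yields $\kappa_+(k,\gamma,r)\leq \rho_+(k,\gamma)\leq \kappa_u$ deterministically. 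For the lower bound it suffices to show that, with probability at least $1-e^{-t}$,
\[
\sup_{(\bu,\bbeta)\in \mathcal{C}(k,\gamma,r)}\langle \bu,R_\tau(\bbeta)\bu\rangle\leq \kappa_l/2,
\]
since this yields $\kappa_-(k,\gamma,r)\geq \rho_-(k,\gamma)-\kappa_l/2\geq \kappa_l/2$.

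Next I would peel off the dependence on $\bbeta$. Since $\|\bbeta-\bbeta^*\|_1\leq r$ and $\|\bx_i\|_\infty\leq L$, we have $|\langle \bx_i,\bbeta-\bbeta^*\rangle|\leq Lr\leq \tau/8$ under $\tau\geq 8Lr$. Writing $y_i-\langle \bx_i,\bbeta\rangle=\varepsilon_i-\langle \bx_i,\bbeta-\bbeta^*\rangle$, the event $\{|y_i-\langle \bx_i,\bbeta\rangle|>\tau\}$ forces $|\varepsilon_i|>7\tau/8\geq \tau/2$. Hence uniformly in the $r$-ball,
\[
R_\tau(\bbeta)\preceq \wt R:=\frac{1}{n}\sum_{i=1}^n D_i\,\bx_i\bx_i^\T,\qquad D_i=\mathbf{1}\bigl(|\varepsilon_i|>\tau/2\bigr),
\]
so the problem reduces to controlling $\sup_{\bu\in\mathcal{C}(k,\gamma)}\langle \bu,\wt R\bu\rangle$. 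On the cone, $\|\bu\|_1\leq \|\bu_J\|_1+\|\bu_{J^c}\|_1\leq (1+\gamma)\|\bu_J\|_1\leq (1+\gamma)\sqrt{k}$, so $(\bu^\T\bx_i)^2\leq L^2(1+\gamma)^2 k$.

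For the expected size, Markov gives $\EE D_i\leq 2^{1+\delta}v_{i,\delta}/\tau^{1+\delta}$, so
\[
\EE\langle \bu,\wt R\bu\rangle\leq 2^{1+\delta}L^2(1+\gamma)^2 k\, v_\delta/\tau^{1+\delta}\leq \kappa_l/4
\]
once $\tau\geq c_1 (L^2 k v_\delta)^{1/(1+\delta)}$ for a suitable $c_1=c_1(\gamma,\kappa_l)$. For the uniform deviation I would symmetrize, apply the Ledoux--Talagrand contraction principle to the Lipschitz map $t\mapsto t^2$ restricted to $[-L(1+\gamma)\sqrt{k},L(1+\gamma)\sqrt{k}]$, and dominate the resulting Rademacher linear process by $(1+\gamma)\sqrt{k}\,\bigl\|n^{-1}\sum_i\epsilon_i D_i\bx_i\bigr\|_\infty$; coordinate-wise Bernstein, crucially exploiting the small variance proxy $\EE D_i^2=\EE D_i\leq 2^{1+\delta}v_\delta/\tau^{1+\delta}$, combined with a union bound over the $d$ coordinates, then delivers a deviation of order $L^2 k\sqrt{t/n}$ (logarithmic factors absorbed into $t$ or $c_2$), which is $\leq \kappa_l/4$ under $n\geq c_2 L^4 k^2 t$. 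The main obstacle will be executing this deviation step without inflating the rate: a naive entrywise $\|\wt R-\EE \wt R\|_{\max}$ estimate costs an extra factor $\|\bu\|_1^2\asymp k$, and treating $D_i$ as merely bounded (rather than of variance $\lesssim v_\delta/\tau^{1+\delta}$) would inject an extra $k$ or $\log d$ that the stated scaling $n\gtrsim L^4 k^2 t$ cannot afford, so the moment-based variance control must be inserted at precisely this point.
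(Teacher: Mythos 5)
Your reduction steps match the paper's: the decomposition $\Hb_\tau(\bbeta)=\Sb_n-R_\tau(\bbeta)$, the use of $\tau\geq 8Lr$ together with $\|\bx_i\|_\infty\leq L$ and $\|\bbeta-\bbeta^*\|_1\leq r$ to replace the $\bbeta$-dependent indicator by $\mathbf{1}(|\varepsilon_i|>\tau/2)$, the cone bound $|\langle\bu,\bx_i\rangle|\leq L(1+\gamma)\sqrt{k}$, and the Markov bound $\PP(|\varepsilon_i|>\tau/2)\leq (2/\tau)^{1+\delta}v_{i,\delta}$ are all exactly what the paper does. Where you diverge is the concentration step, and there you miss the observation that makes the paper's proof short: after the reduction, the only random object left is the scalar $n^{-1}\sum_{i=1}^n \mathbf{1}(|\varepsilon_i|>\tau/2)$, which does not depend on $\bu$ or $\bbeta$ at all. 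The paper simply bounds
$$\sup_{(\bu,\bbeta)\in\mathcal{C}(k,\gamma,r)}\langle\bu,R_\tau(\bbeta)\bu\rangle\;\leq\; L^2k(1+\gamma)^2\cdot\frac{1}{n}\sum_{i=1}^n \mathbf{1}\bigl(|\varepsilon_i|>\tau/2\bigr)$$
and applies a single scalar Hoeffding inequality to the bounded i.i.d.\ average, getting a deviation of $\sqrt{t/(2n)}$ with probability $1-e^{-t}$; multiplying by the deterministic prefactor $L^2k(1+\gamma)^2$ yields exactly the requirement $n\gtrsim L^4k^2t$ with constants depending only on $(\gamma,\kappa_l)$. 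No symmetrization, contraction, or empirical-process machinery is needed.

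Your proposed route (symmetrization, Ledoux--Talagrand contraction, coordinate-wise Bernstein, union bound over $d$ coordinates) would work as an argument, but it does not deliver the lemma as stated: the union bound over coordinates forces the exception probability to be $de^{-x}$, so you must take $x\asymp t+\log d$, and the resulting $\log d$ cannot be ``absorbed into $c_2$'' since $c_2$ is required to depend only on $(\gamma,\kappa_l)$. (Your route does buy a better $k$-dependence, roughly $n\gtrsim L^2k(t+\log d)$, which is the kind of improvement the paper only obtains under random designs; but for this fixed-design lemma it proves a different statement.) So the gap is not in the structure of the argument but in the final step: you reached for a uniform deviation bound where none is required, and in doing so changed the scaling condition.
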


With the above preparations in place, we are now ready to present the main results on the adaptive Huber estimator in high dimensions.

\begin{theorem}[\sf Upper Bound in High Dimensions]\label{thm:hd}
Assume Condition \ref{con:re} holds with $(k,\gamma)=(2s, 3)$, $v_\delta <\infty$ for some $0<\delta \leq 1$.   For any $t>0$ and $\tau_0\geq  \nu_{\delta}$, let $\tau =\tau_0 (n/t)^{\max\{ 1/(1+\delta) , 1/2 \}}$, $\lambda \geq 4L \tau_0  (t/n)^{ \min\{ \delta/(1+\delta) , 1/2 \}  }$, and $r>12\kappa_l^{-1}  s \lambda $.  Then with probability at least $1- (2s + 1) e^{-t}$,  the $\ell_1$-regularized Huber estimator $\hat{\bbeta}_{\tau, \lambda}$ defined in \eqref{ssdr} satisfies
\#
 \big\|\hat\bbeta_{\tau, \lambda} -\bbeta^*\big\|_2 \leq  3 \kappa_l^{-1}   s^{1/2} \lambda ,   \label{hd.bound.general}
\#
as long as $n \geq C(L,\kappa_l) s^2 t$ for some $C(L,\kappa_l)$ depending only on $(L,\kappa_l)$. In particular, with $t=(1+c) \log d$ for $c>0$ we have
\#
    \big\|\hat\bbeta_{\tau, \lambda} -\bbeta^*\big\|_2\lesssim   \kappa_l^{-1}L \tau_0 \, s^{1/2} \bigg\{ \frac{ (1+c) \log d}{n}\bigg\}^{\min\{\delta/(1+\delta),1/2\}}  \label{hd.bound.special}
\#
with probability at least $1- d^{-c}$.
\end{theorem}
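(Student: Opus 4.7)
The plan is to follow the standard two-step recipe for analyzing $\ell_1$-regularized M-estimators, with two important modifications: (i) controlling the bias that arises because $\nabla\EE\cL_\tau(\bbeta^*)\neq \mathbf{0}$ under adaptive Huberization, and (ii) invoking the localized restricted eigenvalue analysis to handle the fact that the Hessian of the Huber loss depends on $\bbeta$ through the indicator $\mathbf{1}(|y_i-\langle\bx_i,\bbeta\rangle|\leq\tau)$. The two ingredients are (a) a sharp high-probability bound on $\|\nabla\cL_\tau(\bbeta^*)\|_\infty$ that dictates the choice of $\lambda$, and (b) the strong-convexity-on-a-cone statement provided by Condition~\ref{con:lre}, which by Lemma~\ref{lemma:hd:lm1} follows from the assumed RE condition on $\Sb_n$.

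\emph{Step 1: Score control.} Writing $\nabla\cL_\tau(\bbeta^*)=-n^{-1}\sum_i\ell_\tau'(\varepsilon_i)\bx_i$, I decompose each coordinate into a centered sum and a deterministic bias, $\ell_\tau'(\varepsilon_i)=[\ell_\tau'(\varepsilon_i)-\EE\ell_\tau'(\varepsilon_i)]+\EE\ell_\tau'(\varepsilon_i)$. Because $|\ell_\tau'|\leq \tau$ and $|\ell_\tau'(\varepsilon)|^2\leq \tau^{1-\delta}|\varepsilon|^{1+\delta}$ for $0<\delta\leq 1$, Bernstein's inequality combined with the union bound over the $d$ coordinates yields a deviation bound of order $L\{v_\delta \tau^{1-\delta}t/n+\tau t/n\}^{1/2}$ for the centered part. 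The bias term is controlled by $|\EE\ell_\tau'(\varepsilon_i)|=|\EE[\ell_\tau'(\varepsilon_i)-\varepsilon_i]|\leq \EE|\varepsilon_i|\mathbf{1}(|\varepsilon_i|>\tau)\leq v_\delta\tau^{-\delta}$. Plugging $\tau=\tau_0(n/t)^{\max\{1/(1+\delta),1/2\}}$ balances the two contributions and delivers $\|\nabla\cL_\tau(\bbeta^*)\|_\infty \leq 2L\tau_0(t/n)^{\min\{\delta/(1+\delta),1/2\}}\leq \lambda/2$ on an event of probability at least $1-2de^{-t}$.

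\emph{Step 2: Cone condition.} On this event, optimality of $\hat\bbeta_{\tau,\lambda}$ gives $\cL_\tau(\hat\bbeta_{\tau,\lambda})+\lambda\|\hat\bbeta_{\tau,\lambda}\|_1\leq \cL_\tau(\bbeta^*)+\lambda\|\bbeta^*\|_1$; combined with convexity $\cL_\tau(\hat\bbeta_{\tau,\lambda})\geq \cL_\tau(\bbeta^*)+\langle\nabla\cL_\tau(\bbeta^*),\hat\bdelta\rangle$ for $\hat\bdelta:=\hat\bbeta_{\tau,\lambda}-\bbeta^*$, and the $\ell_\infty/\ell_1$ duality, the usual basic-inequality manipulation produces $\|\hat\bdelta_{\cS^c}\|_1\leq 3\|\hat\bdelta_\cS\|_1$, so $\hat\bdelta$ lies in the cone $\cC(2s,3)$ and in particular $\|\hat\bdelta\|_1\leq 4s^{1/2}\|\hat\bdelta\|_2$.

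\emph{Step 3: Localized strong convexity and the self-consistency argument.} Apply Lemma~\ref{lemma:hd:lm1} with $(k,\gamma)=(2s,3)$ to conclude that, on an additional event of probability at least $1-e^{-t}$, Condition~\ref{con:lre} holds with constants $\kappa_l/2,\kappa_u$ on the neighborhood $\|\bbeta-\bbeta^*\|_1\leq r$; this produces a quadratic lower bound $\langle\hat\bdelta,\nabla\cL_\tau(\bbeta^*+\bdelta)-\nabla\cL_\tau(\bbeta^*)\rangle\geq(\kappa_l/2)\|\hat\bdelta\|_2^2$ for any $\bdelta$ on the segment from $0$ to $\hat\bdelta$ provided that segment stays in the ball. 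The main obstacle is precisely ensuring $\hat\bdelta$ stays within radius $r$ so that LRE is applicable. I plan to handle this via the standard contraction trick: set $\eta=\min\{1,r/\|\hat\bdelta\|_1\}$ and $\tilde\bdelta=\eta\hat\bdelta$; convexity of $\cL_\tau+\lambda\|\cdot\|_1$ propagates the basic inequality to $\tilde\bbeta:=\bbeta^*+\tilde\bdelta$, and $\tilde\bdelta$ still lies in the cone and inside the $\ell_1$-ball of radius $r$. Combining LRE, the cone condition, and $\lambda\geq 2\|\nabla\cL_\tau(\bbeta^*)\|_\infty$ yields $\|\tilde\bdelta\|_2\leq 3\kappa_l^{-1}s^{1/2}\lambda$, hence $\|\tilde\bdelta\|_1\leq 12\kappa_l^{-1}s\lambda<r$ by the assumption $r>12\kappa_l^{-1}s\lambda$. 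This strict inequality forces $\eta=1$, so $\hat\bdelta=\tilde\bdelta$ and \eqref{hd.bound.general} follows. The specialization \eqref{hd.bound.special} is obtained by taking $t=(1+c)\log d$, absorbing the $2s+1$ prefactor into the logarithm, and using the prescribed $\lambda$.
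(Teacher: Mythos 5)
Your proposal follows essentially the same architecture as the paper's proof: control of $\|\nabla\cL_\tau(\bbeta^*)\|_\infty$ to get the $\ell_1$-cone property, the localized restricted eigenvalue bound via Lemma~\ref{lemma:hd:lm1}, and the contraction trick $\tilde\bdelta=\eta\hat\bdelta$ with $\eta=\min\{1,r/\|\hat\bdelta\|_1\}$ propagated through the symmetric Bregman divergence (Lemma~\ref{lm01}) to force $\eta=1$. The one substantive difference is in the score control: the paper does not use Bernstein's inequality but rather the multiplicative exponential-moment bound $e^{\psi_1(u)}\leq 1+u+|u|^{1+\delta}$ applied to $\psi_1(\varepsilon_i/\tau)x_{ij}/L$, which directly yields $\PP(|\Psi_j|\geq 2t/n)\leq 2e^{-t}$ and hence the clean constant $2L\tau t/n=2L\tau_0(t/n)^{\delta/(1+\delta)}$ compatible with $\lambda\geq 4L\tau_0(t/n)^{\delta/(1+\delta)}$. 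Your Bernstein route is valid (the variance proxy $\EE\,\psi_\tau(\varepsilon)^2\leq \tau^{1-\delta}v_\delta$ and the a.s.\ bound $L\tau$ give the right rate after balancing), but two bookkeeping points differ: (i) the Bernstein constant is roughly $\sqrt{2}+1/3$ on the fluctuation plus $1$ on the bias, slightly exceeding $2$, so the factor $4$ in the stated condition on $\lambda$ would need to be enlarged; and (ii) your union bound over all $d$ coordinates gives failure probability $(2d+1)e^{-t}$ rather than the stated $(2s+1)e^{-t}$ (the paper achieves the latter by bounding only the $\cS$-coordinates of the gradient, at the cost of some looseness against the cone lemma, which formally requires the full sup-norm). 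Neither point affects the specialization \eqref{hd.bound.special} beyond constants.
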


The above result demonstrates that the regularized Huber estimator with an adaptive robustification parameter converges at the rate $s^{1/2}  \{ ( \log d ) / n \}^{\min\{\delta/(1+\delta),1/2\}}$ with overwhelming probability. Provided the observation noise has finite variance,  the proposed estimator performs as well as the Lasso with sub-Gaussian errors. We advocate the adaptive Huber regression method since sub-Gaussian condition often fails in practice \citep{Lan2015, eklund2016cluster}.

\begin{remark}
As pointed out by a reviewer, if one pursues a sparsity-adaptive approach, such as the SLOPE \citep{bogdan2015slope,bellec2016slope}, the upper bound on $\ell_2$-error can be improved from $\sqrt{s\log(d) /n}$ to $\sqrt{s \log(ed/s)/n}$.
With heavy-tailed observation noise, it is interesting to investigate whether this sharper bound can be achieved by Huber-type regularized estimator.
We leave this to future work as a significant amount of additional work is still needed.
On the other hand, since $\log(ed/s) = 1+ \log d - \log s$ and $s \leq n$, 
$\log(ed/s)$ scales the same as $\log d$ so long as $\log d > a \log n$ for some $a > 1$.
\end{remark}

\begin{remark}
Analogously to the low dimensional case, here we impose the sample size scaling $n\gtrsim s^2\log d$ under fixed designs.
In the supplementary material, we obtain minimax optimal $\ell_1$-, $\ell_2$- and prediction error bounds for $\hat{\bbeta}_{\tau, \lambda}$ with random designs under the scaling $n\gtrsim s \log d$.
\end{remark}

Finally, we establish a matching  lower bound for estimating $\bbeta^*$. 
Recall the definition of $\cU_n$ in Theorem~\ref{thm:ld:mini}.

\begin{theorem}[\sf Lower Bound in High Dimensions]\label{thm:hd:mini}
Assume that $\varepsilon_i$ are independent from some distribution in $\cP_\delta^{v_\delta}$. Suppose that  Condition \ref{con:re} holds with $k=2s$ and  $\gamma=0$. Further assume that there exists a set $\cA$ with $|\cA|=s$ and $\ub\in \cU_n$ such that $\|\Xb_\cA^\T \ub/n\|_{\min}\geq \alpha$ for some $\alpha>0$. Then, for any $A>0$ and $s$-sparse estimator $\widehat\bbeta = \hat{\bbeta}(y_1, \ldots, y_n, A)$ possibly depending on $A$, we have
\$
\sup_{\PP \in \cP_{\delta}^{v_\delta}}\PP\Bigg[ \big\|\widehat\bbeta-\bbeta^*\big\|_2\geq     \nu_{\delta}  \frac{\alpha s^{1/2} }{\kappa_u}  \bigg(\frac{A\log d}{2n}\bigg)^{\min\{{\delta}/({1+\delta}), 1/2\}}\Bigg] \geq 2^{-1}d^{-A},
\$
as long as $n\geq 2(A\log d+\log 2)$.
\end{theorem}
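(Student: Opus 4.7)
The plan is to adapt the two-point Le Cam argument underpinning the low-dimensional lower bound (Theorem~\ref{thm:ld:mini}) to the sparse setting by restricting both hypotheses to have supports contained in $\cA$ and invoking the restricted eigenvalue condition on the principal submatrix $\Sb_\cA:=n^{-1}\Xb_\cA^\T\Xb_\cA$. With $\mathbf{g}:=n^{-1}\Xb_\cA^\T\ub\in\RR^s$ (so $\|\mathbf{g}\|_{\min}\geq\alpha$ and $\|\mathbf{g}\|_2\geq\alpha\sqrt{s}$), take the null $\bbeta_0^*=\mathbf{0}$ and the alternative $\bbeta_1^*\in\RR^d$ supported on $\cA$ with $\bbeta_{1,\cA}^*:=c\,\Sb_\cA^{-1}\mathbf{g}$ for a tuning constant $c>0$. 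The RE condition at $(k,\gamma)=(2s,0)$ forces $\lambda_{\max}(\Sb_\cA)\leq\kappa_u$, so $\|\bbeta_1^*-\bbeta_0^*\|_2\geq c\|\mathbf{g}\|_2/\kappa_u\geq c\alpha\sqrt{s}/\kappa_u$; moreover $\Xb_\cA\bbeta_{1,\cA}^*$ equals the projection of $c\ub$ onto $\mathrm{col}(\Xb_\cA)$, which forces $\sum_{i=1}^n\langle\bx_i,\bbeta_1^*\rangle^2\leq c^2\|\ub\|_2^2=c^2n$.

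Next I specify error distributions in $\cP_\delta^{v_\delta}$ tailored to the regime of $\delta$. For $\delta\geq 1$, take $\varepsilon_i^{(0)},\varepsilon_i^{(1)}\sim N(0,v_1)$; the KL contribution from observation $i$ is then $\langle\bx_i,\bbeta_1^*\rangle^2/(2v_1)$. For $0<\delta<1$, take $\varepsilon_i^{(0)}$ symmetric on three atoms $\{-M,0,M\}$ with weights $\{p/2,1-p,p/2\}$ and $M=(v_\delta/p)^{1/(1+\delta)}$, and build $\varepsilon_i^{(1)}$ so that $y_i^{(1)}=\langle\bx_i,\bbeta_1^*\rangle+\varepsilon_i^{(1)}$ is supported on the same three atoms with reweighted probabilities (adjusting $M$ slightly per $i$ to keep $\varepsilon_i^{(1)}\in\cP_\delta^{v_\delta}$). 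A Taylor expansion gives $\mathrm{KL}(\mathrm{law}(y_i^{(1)})\|\mathrm{law}(y_i^{(0)}))\lesssim\langle\bx_i,\bbeta_1^*\rangle^2/(M^2p)$, and optimizing $p$ under the validity constraint $p\gtrsim(\max_i|\langle\bx_i,\bbeta_1^*\rangle|)^{(1+\delta)/\delta}/v_\delta^{1/\delta}$ converts this into the heavy-tail-sharp bound $\lesssim|\langle\bx_i,\bbeta_1^*\rangle|^{(1+\delta)/\delta}/v_\delta^{1/\delta}$. Summing and using $\sum_i\langle\bx_i,\bbeta_1^*\rangle^2\leq c^2n$ (together with a projection bound on $\max_i|\langle\bx_i,\bbeta_1^*\rangle|$ to absorb the sup-norm into $c$), the total divergence satisfies $\mathrm{KL}(\PP_1\|\PP_0)\lesssim nc^{2\wedge(1+\delta)/\delta}/v_\delta^{1\wedge1/\delta}$.

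Choosing $c\asymp\nu_\delta(A\log d/n)^{\min\{\delta/(1+\delta),1/2\}}$ forces $\mathrm{KL}(\PP_1\|\PP_0)\leq A\log d$; the scaling $n\geq 2(A\log d+\log 2)$ keeps $c$ in the range where the atomic weights remain nonnegative and the KL expansion is valid. Bretagnolle--Huber gives $1-\mathrm{TV}(\PP_0,\PP_1)\geq\tfrac{1}{2}e^{-\mathrm{KL}(\PP_1\|\PP_0)}\geq\tfrac{1}{2}d^{-A}$, and the standard Le Cam two-point reduction
\$
\inf_{\widehat\bbeta}\,\max_{j\in\{0,1\}}\PP_j\!\Bigl[\|\widehat\bbeta-\bbeta_j^*\|_2\geq\tfrac{1}{2}\|\bbeta_0^*-\bbeta_1^*\|_2\Bigr]\geq\tfrac{1}{2}\bigl(1-\mathrm{TV}(\PP_0,\PP_1)\bigr)\geq 2^{-1}d^{-A}
\$
then yields the theorem, since $\tfrac{1}{2}\|\bbeta_0^*-\bbeta_1^*\|_2$ matches the stated $\psi$ for our choice of $c$, and both laws $\PP_0,\PP_1$ live in $\cP_\delta^{v_\delta}$ so that the right-hand side is dominated by the advertised $\sup_{\PP\in\cP_\delta^{v_\delta}}$.

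The main technical obstacle is the noise construction for $0<\delta<1$: engineering a pair of distributions in $\cP_\delta^{v_\delta}$ whose KL divergence scales as $|\mathrm{shift}|^{(1+\delta)/\delta}$ rather than the Gaussian-type $|\mathrm{shift}|^2$, while exactly preserving the prescribed fractional central moment under shifting. The key is to let the atomic mass $p$ shrink with the shift and optimize it against the moment constraint; this optimization is precisely what produces the exponent $\delta/(1+\delta)$ in the rate and matches the phase transition in the upper bound of Theorem~\ref{thm:hd}. Once the noise construction is in hand, the remaining ingredients (projection identity for $\Xb_\cA\bbeta_{1,\cA}^*$, Bretagnolle--Huber, Le Cam) are routine.
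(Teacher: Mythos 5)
Your proposal takes a genuinely different route from the paper's. The paper computes no KL divergence at all: it puts the heavy tail entirely in the noise via two-point laws $\PP^{\pm}_c$ with a rare atom of mass $\gamma\asymp (A\log d)/n$ at $\pm c$, arranges the signal so that the mean shift per observation is the \emph{tiny} quantity $c\gamma$ (not $c$), and couples the two samples so that $\by_1=\by_2=\mathbf{0}$ with probability $(1-\gamma)^n\geq d^{-A}$; on that event no estimator can distinguish the two hypotheses, which are separated by $2c\gamma\alpha s^{1/2}/\kappa_u$. Your Le Cam/Bretagnolle--Huber scheme with a three-atom noise family is a legitimate alternative in principle, and your per-observation divergence scaling $\mathrm{KL}_i\asymp |u_i|^{(1+\delta)/\delta}/v_\delta^{1/\delta}$ with $u_i=\langle \bx_i,\bbeta_1^*\rangle$ is indeed the correct local behavior for the class $\cP_\delta^{v_\delta}$.

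The gap is in summing these per-observation divergences when $0<\delta<1$. The exponent $(1+\delta)/\delta$ then exceeds $2$, so the only available bound is $\sum_i|u_i|^{(1+\delta)/\delta}\leq (\max_i|u_i|)^{(1-\delta)/\delta}\sum_i u_i^2$, and the projection identity gives only $\sum_i u_i^2\leq c^2 n$ together with $\max_i|u_i|\leq c\sqrt{n}$, since $\Xb_\cA\bbeta^*_{1,\cA}=c\,\Pi_\cA\ub$ and a projection can concentrate a $\pm 1$ vector on few coordinates. In the worst case $\sum_i|u_i|^{(1+\delta)/\delta}$ is as large as $(c\sqrt{n})^{(1+\delta)/\delta}$, exceeding the required $nc^{(1+\delta)/\delta}$ by a factor $n^{(1-\delta)/(2\delta)}$; the ``projection bound on $\max_i|u_i|$'' you invoke to absorb this into $c$ amounts to a bounded-leverage condition on $\Xb_\cA$ that is not among the theorem's hypotheses, so as written the total KL does not close. (There is also a harmless factor-of-two slip at the end: Le Cam combined with Bretagnolle--Huber at $\mathrm{KL}\leq A\log d$ yields $\frac{1}{4}d^{-A}$ rather than $\frac{1}{2}d^{-A}$.) The paper's construction sidesteps the issue entirely because the mean shift is exactly $c\gamma$ in magnitude for every $i$ and the indistinguishability probability comes from the explicit coupling, with no higher moment of the projected signal ever entering; if you keep the KL route you must either add a leverage assumption or move the rare large atom into the noise as the paper does, at which point the coupling argument is simpler than the KL computation anyway.
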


Together,  Theorems~\ref{thm:hd} and \ref{thm:hd:mini} show that the regularized adaptive Huber estimator achieves the optimal rate of convergence  in $\ell_2$-error. The proof, which is given in the supplementary material, involves constructing a sub-class of binomial distributions for the regression errors.
Unifying the results in low and high dimensions, we arrive at the claim \eqref{eq:phase.uni} and thus the phase transition in Figure \ref{fig:1}.

\section{Extension to Heavy-tailed Designs} \label{sec:ext}

In this section, we extend the idea of adaptive Huber regression described in Section~\ref{sec:2} to the case where both the covariate vector  $\bx$ and the regression error $\varepsilon$ exhibit heavy tails.
We focus on the high dimensional regime $d\gg n$, where $\bbeta^* \in \RR^d$ is sparse with $s = \| \bbeta^* \|_0 \ll n$.
Observe that, for Huber regression, the linear part of the Huber loss penalizes the residuals, and therefore robustifies the quadratic loss in the sense that outliers in the response space (caused by heavy-tailed observation noise) are down weighted or removed. Since no robustification is imposed on the covariates, intuitively, the adaptive Huber estimator may not be robust against heavy-tailed covariates. In what follows, we modify the adaptive Huber regression  to robustify both the covariates and regression errors.

To begin with, suppose we observe  independent data $\{( y_i, \bx_i) \}_{i=1}^n$ from $(  y, \bx)$, which follows the linear model $y=\langle \bx, \bbeta^*\rangle + \varepsilon$. To robustify $\bx_i$, we define truncated covariates $ \bx^{\varpi}_i = ( \psi_{\varpi}(x_{i1}), \ldots,   \psi_{\varpi}(x_{id}) )^\T$, where $\psi_\varpi(x) :=   \min\{  \max (-\varpi, x), \varpi \}  $ and $\varpi>0$ is a tuning parameter. Then we consider the  modified adaptive Huber estimator (see \cite{fan2016robust} for a general robustification principle)
\#
	 \hat{\bbeta}_{\tau, \varpi, \lambda}  \in  \arg\min_{\bbeta\in \RR^d} \big\{   \cL^{\varpi}_\tau(\bbeta ) +  \lambda \| \bbeta \|_1  \big\} ,  \label{pure.robust.est}
\#
where $ \cL^{\varpi}_\tau(\bbeta ) = n^{-1} \sum_{i=1}^n\ell_\tau (y_i- \langle  \bx^{\varpi}_i , \bbeta  \rangle )$   and $\lambda>0$ is a regularization parameter.

Let $\cS$ be the true support of $\bbeta^*$ with sparsity $|\cS|=s$, and denote by $  \Hb^{\varpi}_\tau(\bbeta)=\nabla^2  \cL^{\varpi}_\tau(\bbeta)$ the Hessian matrix of the modified Huber loss. To investigate the deviation property of $ \hat{\bbeta}_{\tau, \varpi, \lambda} $, we impose the following mild moment assumptions.

\begin{cond} \label{moment.condition.hd}
(i) $\EE(\varepsilon) = 0$, $\sigma^2 = \EE(\varepsilon^2)>0$ and $v_3 := \EE(\varepsilon^4) <\infty$; (ii) The covariate vector $\bx= (x_{1},\ldots, x_{d})^\T \in \RR^d$ is independent of $\varepsilon$ and satisfies $M_4 := \max_{1\leq j\leq d} \EE(x_j^4) <\infty $.
\end{cond}

We are now in place to state the main result of this section. Theorem~\ref{robust.hdreg.dev.ineq} below demonstrates that the modified adaptive Huber estimator admits exponentially fast concentration when the convariates only have finite fourth moments, although at the cost of stronger scaling conditions.

\begin{theorem}  \label{robust.hdreg.dev.ineq}
Assume Condition~\ref{moment.condition.hd} holds and let $  \Hb^{\varpi}_\tau(\cdot)$ satisfy Condition~\ref{con:lre} with $k=2s$, $\gamma=3$ and $r>  12 \kappa_l^{-1}   \lambda s$. Then, the modified adaptive Huber estimator $ \hat{\bbeta}_{\tau, \varpi, \lambda}$ given in \eqref{pure.robust.est} satisfies, on the event $ \mathcal{E}(\tau, \varpi, \lambda) = \big\{ \| ( \nabla   \cL^{\varpi}_\tau(\bbeta^*) )_{\cS} \|_\infty\leq \lambda/2 \big\}$, that
\#
	\big\| \hat{\bbeta}_{\tau, \varpi, \lambda}  - \bbeta^* \big\|_2 \leq 3 \kappa_l^{-1} s^{1/2} \lambda .  \nn
\#
For any $t>0$, let the triplet $( \tau, \varpi, \lambda)$ satisfy
\#
	\lambda & \geq  2 M_4 \| \bbeta^* \|_2 \, s^{1/2}   \varpi^{-2} + 8  \big\{ v_2  M_2^{1/2}  +  M_4 \| \bbeta^* \|_2^3 \,  s^{3/2} \big\}  \tau^{-2}  \nn \\
 & \quad + 2\big(2\sigma^2 M_2  +  2M_4 \|\bbeta^* \|_2^2 \, s   \big)^{1/2} \sqrt{\frac{t}{n}}  +  \varpi \tau\frac{ t }{n},\label{scaling.tau}
\#
where $v_2 = \EE(|\varepsilon|^3)$ and $M_2 = \max_{1\leq j\leq d} \EE(x_j^2)$. Then $\PP\{ \mathcal{E}(\tau, \varpi, \lambda ) \} \geq 1- 2s e^{-t}$.
\end{theorem}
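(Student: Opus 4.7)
The plan is to split the proof into a deterministic bound (the displayed inequality on $\|\hat\bbeta_{\tau,\varpi,\lambda}-\bbeta^*\|_2$ conditional on $\mathcal{E}$) and a probabilistic bound (on $\PP\{\mathcal{E}\}$), handling them separately.

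\textbf{Deterministic part.} First I would set $\widehat\bnu = \hat\bbeta_{\tau,\varpi,\lambda}-\bbeta^*$. Since the regularized objective $F(\bbeta) = \cL^\varpi_\tau(\bbeta)+\lambda\|\bbeta\|_1$ is minimized at $\hat\bbeta_{\tau,\varpi,\lambda}$, convexity of $\cL^\varpi_\tau$ gives $\langle \nabla\cL^\varpi_\tau(\bbeta^*),\widehat\bnu\rangle + \lambda(\|\bbeta^*+\widehat\bnu\|_1 - \|\bbeta^*\|_1)\leq 0$. Combining the decomposition $\|\bbeta^*+\widehat\bnu\|_1-\|\bbeta^*\|_1\geq \|\widehat\bnu_{\cS^c}\|_1-\|\widehat\bnu_{\cS}\|_1$ with the assumption $\|(\nabla\cL^\varpi_\tau(\bbeta^*))_{\cS}\|_\infty\leq \lambda/2$ on $\mathcal{E}$, a standard manipulation yields the cone condition $\|\widehat\bnu_{\cS^c}\|_1 \leq 3\|\widehat\bnu_{\cS}\|_1$, hence $\|\widehat\bnu\|_1 \leq 4\sqrt{s}\|\widehat\bnu\|_2$. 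I would then invoke the LRE condition at radius $r$ (which is satisfied as long as $\|\widehat\bnu\|_1 \leq r$, ultimately verified \emph{a posteriori} from $r > 12\kappa_l^{-1}\lambda s$) to obtain restricted strong convexity $\cL^\varpi_\tau(\hat\bbeta_{\tau,\varpi,\lambda})-\cL^\varpi_\tau(\bbeta^*)-\langle\nabla\cL^\varpi_\tau(\bbeta^*),\widehat\bnu\rangle \geq (\kappa_l/2)\|\widehat\bnu\|_2^2$. Plugging this into the basic inequality and using $|\langle\nabla\cL^\varpi_\tau(\bbeta^*),\widehat\bnu\rangle|\leq (\lambda/2)\|\widehat\bnu\|_1$ leads to $(\kappa_l/2)\|\widehat\bnu\|_2^2 \leq (3\lambda/2)\|\widehat\bnu_{\cS}\|_1 \leq (3\lambda/2)\sqrt{s}\|\widehat\bnu\|_2$, which gives the advertised bound $\|\widehat\bnu\|_2\leq 3\kappa_l^{-1}s^{1/2}\lambda$.

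\textbf{Probabilistic part.} To show $\PP\{\mathcal{E}\}\geq 1-2se^{-t}$ under the stated scaling on $\lambda$, I would fix $j\in\cS$ and analyze $\partial_j\cL^\varpi_\tau(\bbeta^*) = -n^{-1}\sum_{i=1}^n \ell'_\tau(\varepsilon_i+\Delta_i)\,x^\varpi_{ij}$, where $\Delta_i := \langle\bx_i-\bx_i^\varpi,\bbeta^*\rangle$ is the covariate-truncation residual. Split this into its expectation plus a centered sum, and attack each part.

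\textit{Expectation.} Write $\EE[\ell'_\tau(\varepsilon_i+\Delta_i)x_{ij}^\varpi]=\EE[(\varepsilon_i+\Delta_i)x_{ij}^\varpi]+\EE[(\ell'_\tau-\mathrm{id})(\varepsilon_i+\Delta_i)x_{ij}^\varpi]$. Since $\varepsilon\perp\bx$ and $\EE\varepsilon=0$, the first term reduces to $\EE[\Delta_i x_{ij}^\varpi]$, which I bound entry-wise using $|x_{ik}-x_{ik}^\varpi|\leq x_{ik}^2/\varpi$ on $\{|x_{ik}|>\varpi\}$ together with $\EE[|x_{ik}|^3|x_{ij}|]\leq M_4$ (Cauchy--Schwarz). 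This delivers the $M_4\|\bbeta^*\|_2 s^{1/2}\varpi^{-2}$ contribution. For the second term, I use $|\ell'_\tau(u)-u|\leq |u|^3/\tau^2$, so it is bounded by $\tau^{-2}\EE[|\varepsilon+\Delta|^3|x_{ij}^\varpi|]\lesssim\tau^{-2}\{v_2 M_2^{1/2}+\EE[|\Delta|^3|x_{ij}|]\}$ (by independence). The crux here is the sparse bound on $|\Delta|^3$: via Cauchy--Schwarz $|\Delta|\leq \|\bbeta^*\|_2(\sum_{k\in\cS}x_{ik}^2)^{1/2}$, followed by Jensen $(s^{-1}\sum_{k\in\cS}x_{ik}^2)^{3/2}\leq s^{-1}\sum_{k\in\cS}|x_{ik}|^3$, giving $|\Delta|^3\leq \|\bbeta^*\|_2^3 s^{1/2}\sum_{k\in\cS}|x_{ik}|^3$ and hence $\EE[|\Delta|^3|x_{ij}|]\leq M_4\|\bbeta^*\|_2^3 s^{3/2}$.

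\textit{Concentration.} The summands $Z_{ij}:=\ell'_\tau(\varepsilon_i+\Delta_i)x_{ij}^\varpi$ are bounded by $\varpi\tau$ and have variance at most $\EE[(\varepsilon_i+\Delta_i)^2 x_{ij}^2]\leq 2\sigma^2 M_2+2\EE[\Delta_i^2 x_{ij}^2]$. By the same sparsity argument, $\EE[\Delta_i^2 x_{ij}^2]\leq \|\bbeta^*\|_2^2\sum_{k\in\cS}\EE[x_{ik}^2 x_{ij}^2]\leq sM_4\|\bbeta^*\|_2^2$. Bernstein's inequality then yields, with probability $\geq 1-2e^{-t}$, a centered deviation bounded by $2\sqrt{(2\sigma^2 M_2+2M_4\|\bbeta^*\|_2^2 s)t/n}+\varpi\tau\, t/n$. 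A union bound over the $s$ coordinates in $\cS$ completes the proof.

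\textbf{Main obstacle.} The nontrivial step is bounding the cross moments $\EE[\Delta_i^2 x_{ij}^2]$ and $\EE[|\Delta_i|^3|x_{ij}|]$ so that only the factors $\|\bbeta^*\|_2^k s^{k/2}$ appear (not $\|\bbeta^*\|_1^k$, which would be too crude) using only fourth moments of the covariates. The Cauchy--Schwarz/Jensen combination described above exploits sparsity of $\bbeta^*$ precisely to convert the random $\ell_2$-norm $\|(x_{ik})_{k\in\cS}\|_2$ into controllable fourth moments; without this, the $s^{3/2}$ rate in the $\tau^{-2}$ term would degrade.
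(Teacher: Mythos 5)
Your treatment of the probability bound on $\mathcal{E}(\tau,\varpi,\lambda)$ is essentially the paper's own: the same decomposition of $\EE\{\psi_\tau(\varepsilon_i+\Delta_i)\,x^{\varpi}_{ij}\}$ into the covariate-truncation bias $\EE\{\Delta_i x^{\varpi}_{ij}\}$ plus a response-truncation bias controlled by $|\psi_\tau(u)-u|\leq |u|^3/\tau^2$, the same Cauchy--Schwarz/power-mean step that converts $|\Delta_i|^k$ into $\|\bbeta^*\|_2^k s^{k/2}$ times fourth moments of the covariates, the same variance proxy $\sigma^2 M_2+M_4\|\bbeta^*\|_2^2 s$, and Bernstein's inequality with $|x^{\varpi}_{ij}\psi_\tau(\cdot)|\leq \varpi\tau$ followed by a union bound over $\cS$. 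One cosmetic point: to obtain the $\varpi^{-2}$ rate you need $|x_{ik}-x^{\varpi}_{ik}|\leq |x_{ik}|^3/\varpi^2$ rather than the stated $x_{ik}^2/\varpi$; since $x_{ik}^2/\varpi\leq |x_{ik}|^3/\varpi^2$ on the truncation event $\{|x_{ik}|>\varpi\}$, this is harmless but should be written out.

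The one genuine gap is in the deterministic part. As stated, you apply the localized restricted eigenvalue condition at $\hat{\bbeta}_{\tau,\varpi,\lambda}$ itself, which presupposes $\|\hat{\bbeta}_{\tau,\varpi,\lambda}-\bbeta^*\|_1\leq r$, and then propose to verify this membership ``a posteriori'' from the very bound that the LRE produced --- that is circular. The paper closes the loop with the localization device from the proof of Theorem~\ref{thm:hd}: set $\wt\bbeta_\eta=\bbeta^*+\eta(\hat\bbeta_{\tau,\varpi,\lambda}-\bbeta^*)$ with $\eta$ chosen so that $\|\wt\bbeta_\eta-\bbeta^*\|_1=r$ whenever the estimator falls outside the $\ell_1$-ball, use convexity of the loss (Lemma~\ref{lm01}, monotonicity of the symmetric Bregman divergence along the segment) to transfer the basic inequality from $\hat\bbeta_{\tau,\varpi,\lambda}$ to $\wt\bbeta_\eta$, apply the LRE to $\wt\bbeta_\eta$ (which lies in the cone by Lemma~\ref{lemma:1cone} and within radius $r$ by construction), obtain the strict inequality $\|\wt\bbeta_\eta-\bbeta^*\|_1\leq 12\kappa_l^{-1}s\lambda<r$, and conclude $\eta=1$. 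With this interpolation step inserted, the rest of your deterministic argument (basic inequality, cone condition, $\|\bdelta\|_1\leq 4\sqrt{s}\,\|\bdelta\|_2$, cancellation of one factor of $\|\bdelta\|_2$) goes through exactly as in the paper.
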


\begin{remark}
Assume that the quantities $v_3$, $M_4$ and $\| \bbeta^*\|_2$ are all bounded. Taking $t\asymp \log d$ in  \eqref{scaling.tau}, we see that $\hat{\bbeta}_{\tau, \varpi, \lambda}$ achieves a near-optimal convergence rate of order $s\sqrt{ (\log d) /n }$ when the parameters $(\tau, \varpi, \lambda)$ scale as
$$
 \tau \asymp  s^{1/2} \bigg( \frac{n}{\log d} \bigg)^{1/4} , \ \ \varpi \asymp  \bigg( \frac{n}{\log d} \bigg)^{1/4} ~\mbox{ and }~ \lambda \asymp \sqrt{\frac{ s \log d}{n}}.
$$
We remark here that  the theoretically optimal $\tau$ is different from that in the sub-Gaussian design case. See Theorem~B.2 in the supplementary material.
\end{remark}

\section{Algorithm and Implementation}\label{sec:3plus}

This section is devoted to computational algorithm and numerical implementation. We focus on the regularized adaptive Huber regression  in \eqref{ssdr}, as \eqref{Huber.est} can be easily solved via the iteratively reweighted least squares method. To solve the convex optimization problem in \eqref{ssdr},  standard optimization algorithms, such as the cutting-plane or interior point method, are not scalable to large-scale problems.

In what follows, we describe a fast and easily implementable method using the local adaptive majorize-minimization  (LAMM) principle \citep{fan2015tac}.  We say that  a function $g(\btt|\btt^{(k)})$ majorizes $f(\bbeta)$ at the point $\bbeta^{(k)}$ if
\$
g(\btt|\btt^{(k)}) \geq f(\btt) \quad \mbox{and} \quad
g(\btt^{(k)}|\btt^{(k)}) = f(\btt^{(k)}).
\$
To minimize a general function $f(\btt)$, a majorize-minimization (MM) algorithm  initializes  at $\bbeta^{(0)}$, and then iteratively computes $\bbeta^{(k+1)}=\arg\min_{\bbeta\in \RR^d} g(\bbeta|\bbeta^{(k)})$ for $k=0,1,\ldots$. The objective value of such an algorithm decreases in each step, since
\#\label{0311.3}
   f(\bbeta^{(k+1)}) \stackrel{\mbox{\tiny major.}} {\leq}  g(\bbeta^{(k+1)} \,|\, \bbeta^{(k)}) \stackrel{\mbox{\tiny min.}}{\leq}
    g(\bbeta^{(k)}\,|\,\bbeta^{(k)}) \stackrel{\mbox{\tiny init.}} {=} f(\bbeta^{(k)}).
\#
As pointed out by \cite{fan2015tac}, the majorization requirement only needs to hold locally at $\bbeta^{(k+1)}$ when starting from $\bbeta^{(k)}$.  We therefore  locally majorize $\cL_\tau(\bbeta)$ in \eqref{ssdr} at $\bbeta^{(k)}$   by an isotropic quadratic function
$$
 g_k(\bbeta|\bbeta^{(k)})= \cL_\tau(\btt^{(k)})+\big\langle \nabla\cL_\tau(\btt^{(k)}),\, \btt-\btt^{(k)}\big\rangle +\frac{\phi_k}{2}\big\|\btt-\btt^{(k)}\big\|_2^2,
$$
where $\phi_k$ is a quadratic parameter such that  $g_k(\btt^{(k+1)}|\btt^{(k)}) \geq \cL_\tau(\btt^{(k+1)})$.
The isotropic form also allows a simple analytic solution to the subsequent majorized optimization problem:
\#\label{pg_form}
 \min_{\btt\in \RR^d} \biggl\{ \big\langle \nabla\cL_\tau(\btt^{(k)}),\btt-\btt^{(k)}\big\rangle +\frac{\phi_k}{2}\big\|\btt-\btt^{(k)}\big\|_2^2+\lambda\big\|\bbeta\big\|_1   \biggr\}.
\#
It can be shown that \eqref{pg_form} is minimized at
$$
 \btt^{(k+1)}=T_{\lambda,\phi_k}(\btt^{(k)})= S\Big(\btt^{(k)}-{\phi_k^{-1}}{\nabla\cL_\tau(\btt^{(k)})}, {\phi_k^{-1}}\lambda\Big),
$$
where $S(\xb,\lambda)$ is the  soft-thresholding operator defined by
$S(\xb,\lambda)=  \text{sign}(x_j) \max (|x_j|-\lambda , 0  )$. The simplicity of this updating rule is due to the fact that  \eqref{pg_form} is an unconstrained optimization problem.

To find the smallest $\phi_k$ such that $g_k(\bbeta^{(k+1)}|\bbeta^{(k)})\geq \cL_\tau(\bbeta^{(k+1)})$,   the basic idea of LAMM is to start from a relatively small isotropic parameter $\phi_k=\phi_k^0$ and then successfully inflate $\phi_k$ by a factor $\gamma_u > 1$, say $\gamma_u=2$.  If the solution satisfies $g_k(\bbeta^{(k+1)}|\bbeta^{(k)})\geq \cL_\tau(\bbeta^{(k+1)})$, we stop and obtain $\bbeta^{(k+1)}$, which makes the target value non-increasing.  We then continue with the iteration to produce next solution until the solution sequence $\{\bbeta^{(k)} \}_{k=1}^\infty$ converges. A simple stopping criterion is $\|\bbeta^{(k+1)}-\bbeta^{(k)}\|_2\leq \epsilon$ for a sufficiently small $\epsilon$, say $10^{-4}$. We refer to \cite{fan2015tac} for a detailed complexity analysis of the LAMM algorithm.

\begin{algorithm}[!t]\label{alg:1}
	\caption{LAMM algorithm for regularized adaptive  Huber regression. }\label{alg:ls}
	\begin{algorithmic}[1]
		\STATE{{\bf Algorithm}: $\{\btt^{(k)},\phi_k\}_{k=1}^\infty \leftarrow \mbox{LAMM}(\lambda,\btt^{(0)}, \phi_0, \epsilon$ ) }
		\STATE{\textbf{Input}: $\lambda, \btt^{(0)}, \phi_0, \epsilon$   }
		\STATE{\textbf{Initialize}: $\phi^{(\ell,k)}\leftarrow \max\{\phi_{0},\gamma_u^{-1}\phi^{(\ell,k-1)}\}$  }
		\STATE{\textbf{for} $k=0,1,\ldots$ until $\|\bbeta^{(k+1)}-\bbeta^{(k)}\|_2\leq \epsilon$  \textbf{do}}
		\STATE{~~~~ \textbf{Repeat}}
		\STATE{~~~~~~~~ $\btt^{(k+1)}\leftarrow T_{\lambda,\phi_k}(\btt^{(k)})$ }
		\STATE{~~~~~~~~ \textbf{If} $g_k(\btt^{(k+1)}|\bbeta^{(k)})< \cL_\tau(\bbeta^{(k+1)})$ \textbf{ then } $\phi_k\leftarrow \gamma_u\phi_k$ }
		\STATE{~~~~ \textbf{Until}  $g_k(\btt^{(k+1)}|\bbeta^{(k)})\geq \cL_\tau(\bbeta^{(k+1)})$ }
		\STATE{~~~~ \textbf{Return} $\{\btt^{(k+1)},\phi_k\}$}
		\STATE{\textbf{end for}}
		\STATE{\textbf{Output}: $\widehat\bbeta=\bbeta^{(k+1)}$}
	\end{algorithmic}
\end{algorithm}

\section{Numerical Studies}\label{sec:4}


\subsection{ Tuning Parameter and Finite Sample Performance}
\label{sec.implement}

For numerical studies and real data analysis, in the case where the actual order of moments is unspecified, we  presume the variance is finite and therefore choose robustification and regularization parameters as follows:
\$
\tau=c_\tau \times \widehat \sigma\, \left(\frac{n_\eff}{t}\right)^{1/2}~~\textnormal{and}~~\lambda=c_\lambda\times \widehat \sigma \,\left(\frac{n_{\eff}}{t}\right)^{ 1/2},
\$
where $\widehat\sigma^2 =n^{-1}\sum_{i=1}^n(y_i-\bar y)^2$ with $\bar y = n^{-1} \sn y_i$ serves as a crude preliminary estimate of $\sigma^2$, and the parameter $t$ controls the confidence level. We set $t = \log n$ for simplicity except for the phase transition plot.  The constant $c_\tau$ and  $c_\lambda$ are chosen via  3-fold cross-validation from a small set of constants, say $\{0.5, 1, 1.5\}$.

\begin{table}[!t]
\begin{center}
\caption{Results for adaptive Huber regression (AHR) and ordinary least squares (OLS) when $n=100$ and $d=5$. The mean and standard  deviation (std) of $\ell_2$-error based on 100 simulations are reported. }
\vspace{0.2cm}
\begin{tabular}{l   ccc c}
  \hline
 Noise & \multicolumn{2}{c}{AHR}& \multicolumn{2}{c}{OLS} \\
&mean & std & mean & std  \\ \hline
Normal   & 0.566 &0.189& 0.567& 0.191  \\
Student's $t$  & 0.806 &0.651& 1.355 &2.306\\
Log-normal &  3.917 &3.740& 8.529 &13.679\\
\hline
\end{tabular}
\label{table1}
\end{center}
\end{table}

We generate data from the linear model
\#\label{ex:1:eq:1}
y_i= \langle  \bx_i , \bbeta^* \rangle +\varepsilon_i, ~~~i\!=\!1,\ldots, n,
\#	
where $\varepsilon_i$ are i.i.d. regression errors and
$
\bbeta^* = (5,-2,0,0, 3,\underbrace{0,\ldots, 0}_{d-5})^\T  \in \RR^d.
$
Independent of  $\varepsilon_i$, we generate $\bx_i$ from standard multivariate normal distribution $\cN({\bf 0}, \Ib_d )$. 
In this section, we set $(n,d)=(100,5)$, and generate regression errors from three different distributions: the normal distribution $\cN(0,4)$, the $t$-distribution with degrees of freedom 1.5, and the log-normal distribution $\log \cN(0, 4)$.
Both $t$ and log-normal distributions are heavy-tailed, and produce outliers with high chance. 

The results on $\ell_2$-error for adaptive Huber regression and the least squares estimator, averaged over 100 simulations, are summarized in Table~\ref{table1}.
In the case of normally distributed noise, the adaptive Huber estimator performs as well as the least squares.
With heavy-tailed regression errors following Student's $t$ or log-normal distribution,  the adaptive Huber regression significantly outperforms the least squares.
These empirical results reveal that adaptive Huber regression prevails across various scenarios: not only it provides more reliable estimators in the presence of heavy-tailed and/or asymmetric errors, but also loses almost no efficiency at the normal model.

\subsection{Phase Transition}
In this section,  we validate the phase transition behavior of $\|\widehat\bbeta_\tau-\bbeta^*\|_2$ empirically.  We generate continuous responses according to  \eqref{ex:1:eq:1},	
where $\bbeta^*$ and $\bx_i$ are set the same way as before. We sample independent errors as $\varepsilon_i\sim t _\textnormal{df}$, Student's  $t$-distribution with $\textnormal{df}$ degrees of freedom.  Note that $t_{\textnormal{df}}$ has finite $(1+\delta)$-th moments provided $\delta<\textnormal{df}-1$ and infinite $\textnormal{df}$-th moment. Therefore, we take $\delta=\textnormal{df}-1-0.05$ throughout.

In low dimensions, we take $(n,d)=(500,5)$ and a sequence of degrees of freedoms (df's):  $\textnormal{df}\!\in\!\{1.1, 1.2, \ldots, 3.0\}$; in high dimensions, we take $(n, d)=(500,1000)$, with the same choice of df's. Tuning parameters $(\tau, \lambda)$ are calibrated similarly as before. Indicated by the main theorems, 
it holds
\begin{enumerate}
\item{}(Low dimension):
$$
	-\log\big(\|\widehat\bbeta_\tau -\bbeta^*\|_2 \big) \asymp \frac{\delta}{1+\delta}\log (n)-\frac{1}{1+\delta}\log (v_\delta), \ \  0< \delta \leq 1,
$$
\item{}(High dimension):
$$
-\log\big( \|\widehat\bbeta_{\tau} -\bbeta^*\|_2 \big) \asymp \frac{\delta}{1+\delta}\log\Big(\frac{n}{\log d}\Big)-\frac{1}{1+\delta}\log (v_\delta),  \ \   0< \delta \leq 1 ,
$$
\end{enumerate}
which are approximately $\log(n)\times\delta/(1+\delta)$ and $\log(n/\log d)\times \delta/(1+\delta)$, respectively, when  $n$ is sufficiently large.

\begin{figure}[t!]
	\includegraphics[scale=.6]{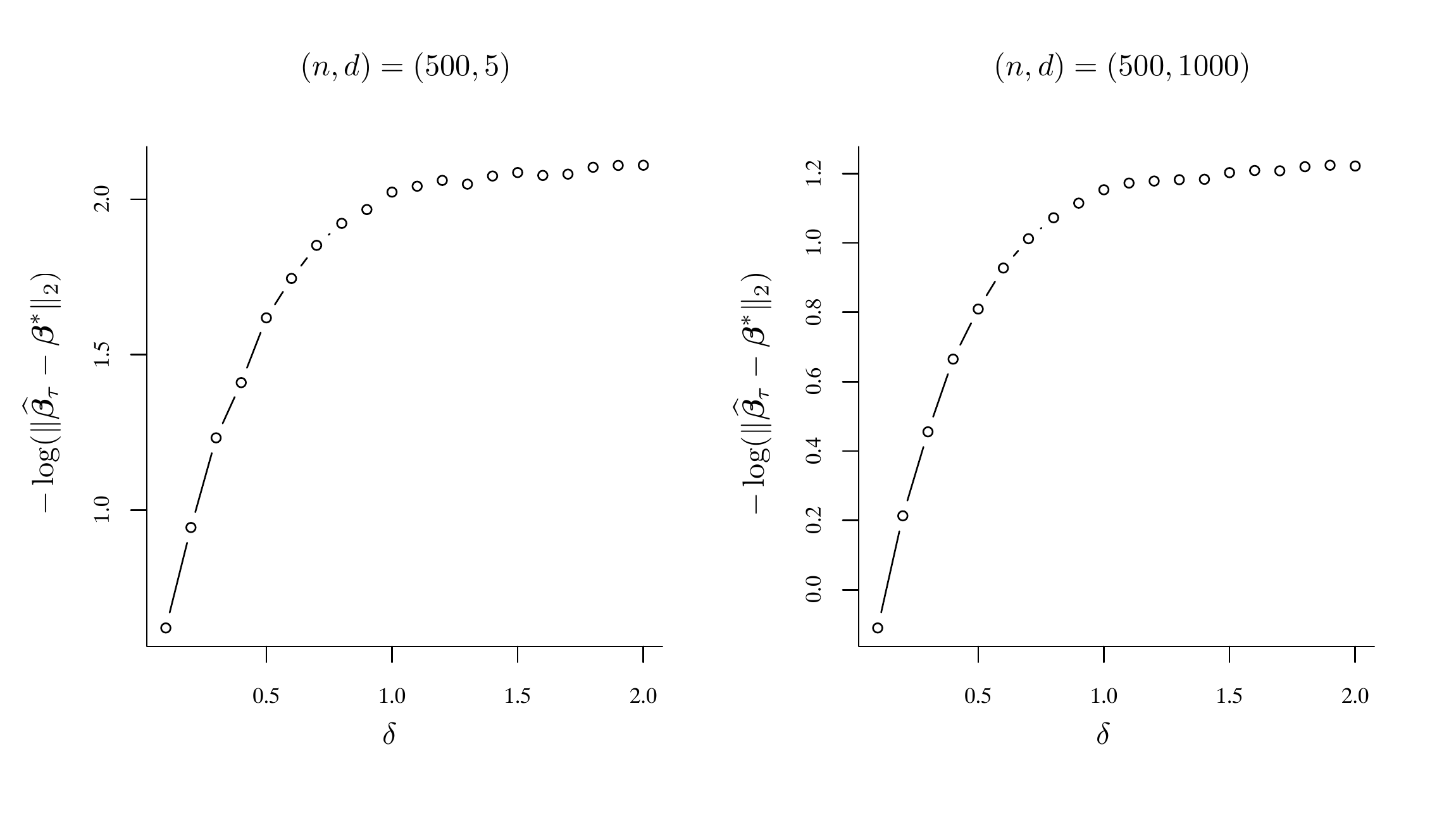}\centering\\
	\caption{Negative $\log$ $\ell_2$-error  versus $\delta$ in low (left panel) and high (right panel) dimensions.}  \label{fig:3}
\end{figure}

\begin{figure}[t!]
	\includegraphics[scale=.6]{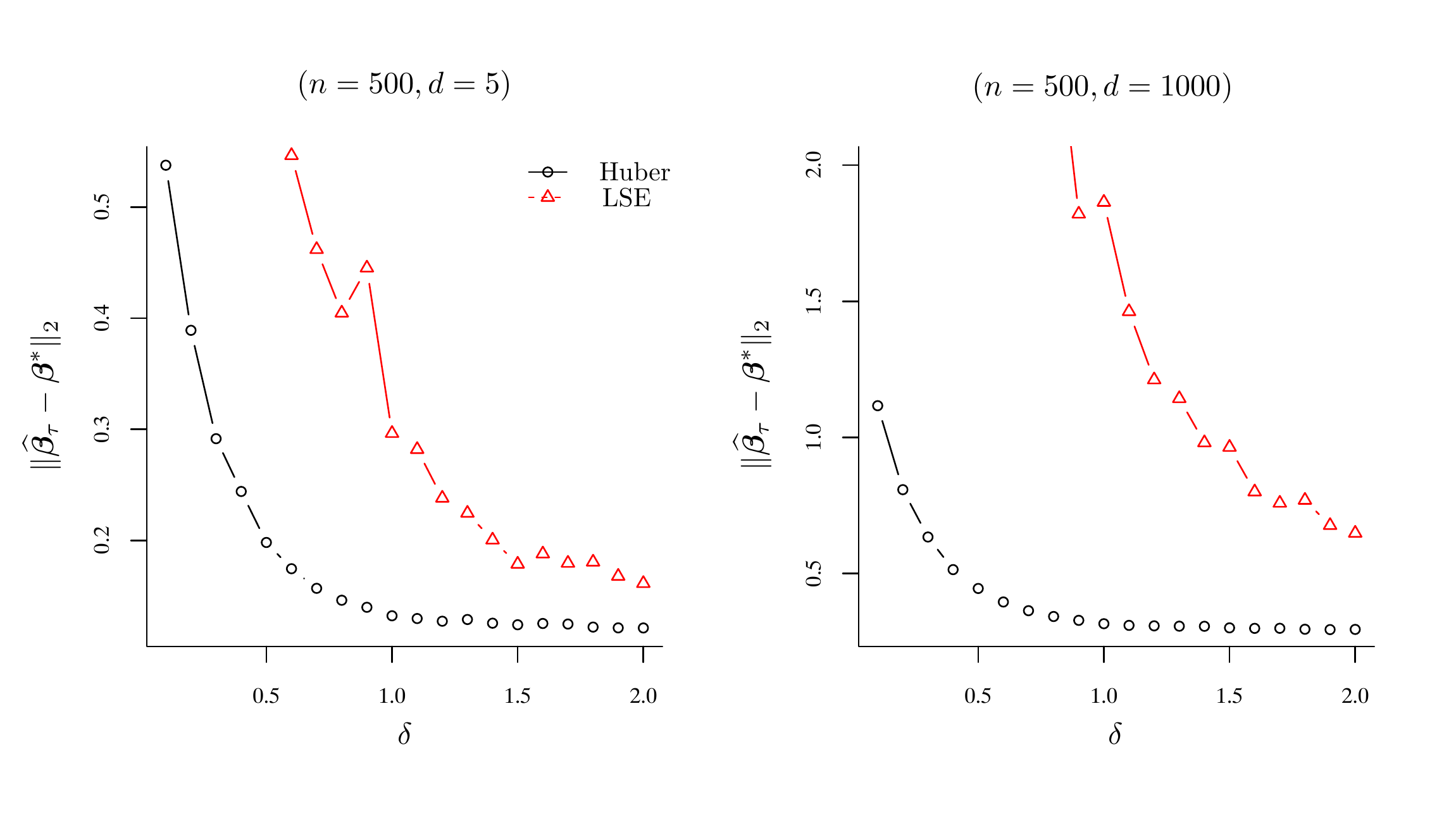}\centering\\
	\caption{Comparison between the  (regularized) adaptive Huber estimator and the (regularized) least squares estimator under $\ell_2$-error.}  \label{fig:4}
\end{figure}

Figure \ref{fig:3} displays the negative $\log$ $\ell_2$-error versus $\delta$ in both low and high dimensions over 200 repetitions for each $(n,d)$ combination. The empirically fitted curve closely resembles the theoretical curve displayed in Figure \ref{fig:1}. These numerical results are in line with the theoretical findings, and empirically validate the phase transition of the adaptive Huber estimator.

We also compared the $\ell_2$-error of the adaptive Huber estimator  with that of the OLS estimator for $t$-distributed errors with varying degrees of freedoms. As shown in Figure \ref{fig:4}, adaptive Huber exhibits a significant advantage especially when $\delta$ is small. The OLS slowly catches up as $\delta$ increases.

\subsection{Effective Sample Size}

\begin{figure}[t!]
	\includegraphics[scale=.6]{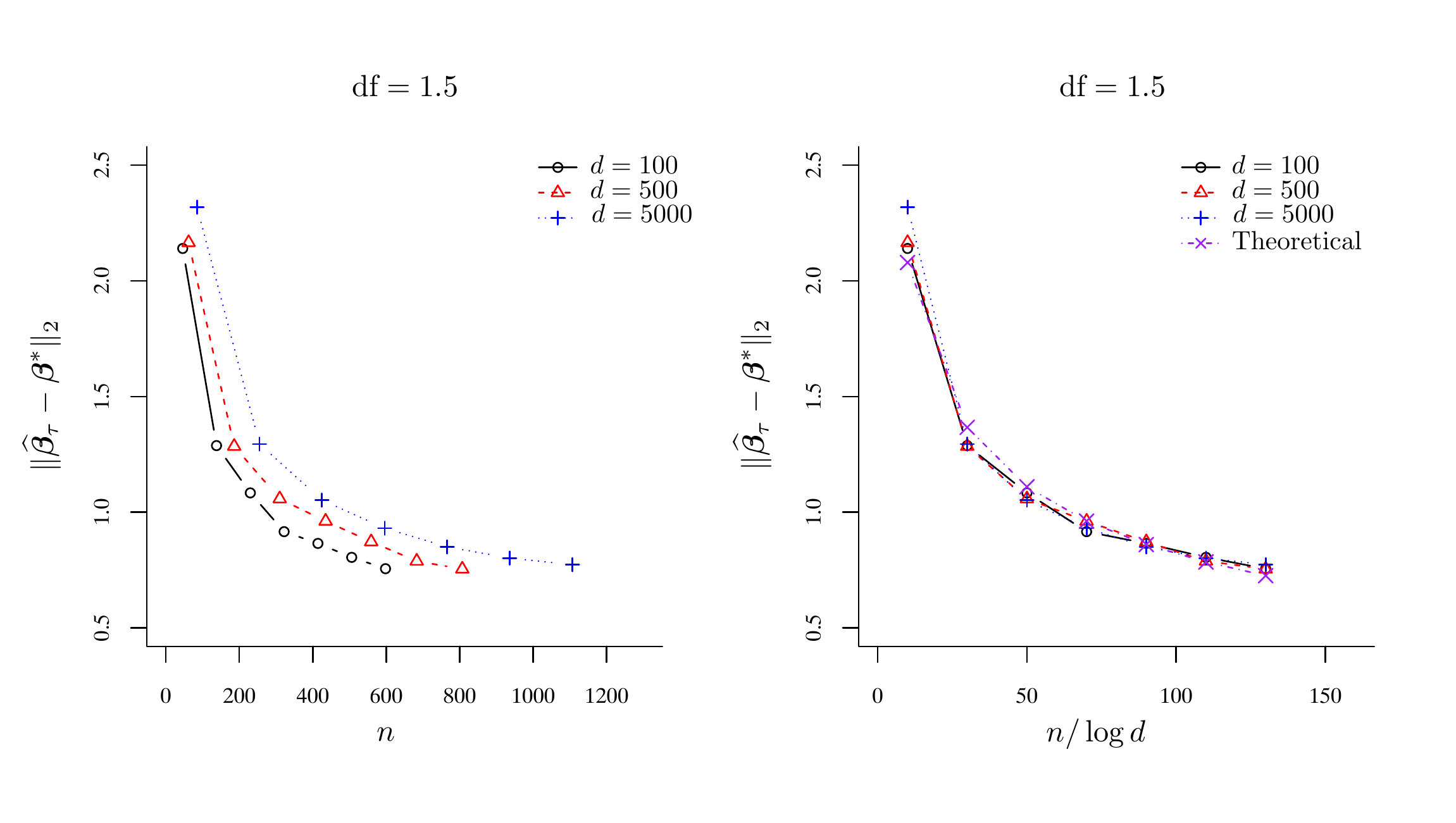}\centering\\
	\caption{The $\ell_2$-error versus sample size $n$ (left panel) and the $\ell_2$-error versus {effective sample size} $n_\textnormal{eff}=n/\log d$ (right panel).
}  \label{fig:2}
\end{figure}

In this section,  we verify the scaling behavior of $\|\widehat\bbeta_\tau-\bbeta^*\|_2$ with respect to the effective sample size.
The data are generated in the same way as before except that the errors are drawn from $t_{1.5}$.
As discussed in the previous subsection, we take $\delta=0.45$ and then choose the robustification parameter as
$
\tau=c_\tau \widehat v_\delta ({n}/{\log d})^{ 1/(1+\delta) },
$ where $\widehat v_\delta$ is the $(1+\delta)$-th sample absolute central moment. For simplicity, we take  $c_\tau=0.5$ here since our goal is to demonstrate the scaling behavior as $n$ grows, instead of to achieve the best finite-sample performance.

The left panel of Figure \ref{fig:2} plots the $\ell_2$-error $\|\widehat\bbeta_{\tau, \lambda }-\bbeta^*\|_2$ versus sample size  over 200 repetitions when the dimension $d \in \{100, 500, 5000\}$. In all three settings, the $\ell_2$-error decays as the sample size grows. As expected, the curves shift to the right when the dimension increases. Theorem~\ref{thm:hd} provides a specific prediction about this scaling behavior: if we plot the $\ell_2$-error versus {effective sample size} ($n/\log d$), the curves should align roughly with the theoretical curve
\$
\|\widehat\bbeta_{\tau, \lambda } -\bbeta^*\|_2\asymp \bigg(\frac{n}{\log d}\bigg)^{-\delta/(1+\delta)}
\$
 for different values of $d$. This is validated empirically by the right panel of Figure~\ref{fig:2}. This near-perfect alignment in Figure \ref{fig:2} is also observed by  \cite{wainwright2009ieee} for  Lasso with sub-Gaussian errors.

\subsection{A Real Data Example: NCI-60 Cancer Cell Lines}
We apply  the proposed methodologies to the NCI-60, a panel of 60 diverse human cancel cell lines.
The NCI-60 consists of data on 60 human cancer cell lines and can be downloaded from \url{http://discover.nci.nih.gov/cellminer/}. More details on data acquisition can be found in \cite{shankavaram2007transcript}. Our aim is to investigate the effects of genes on protein expressions.
The gene expression data were obtained with an Affymetrix HG-U133A/B chip, $\log_2$ transformed and normalized with the guanine dytosine robust multi-array analysis.
 We then combined the same gene expression variables measured by multiple different probes into one   by taking  their median,  resulting in a set of $p=17,924$ predictors. The protein expressions based on 162 antibodies were acquired via reverse-phase protein lysate arrays in their original scale.
One observation had to be removed since all values were missing in the gene expression data, reducing the number of observations to $n = 59$.

We first center all the protein and gene expression variables to have mean zero, and then plot the histograms of the kurtosises of all expressions in Figure \ref{fig:hist}. The left panel in the figure shows that, 145 out of 162 protein expressions have  kurtosises larger than 3; and 49 larger than 9. In other words, more than 89.5\% of the protein expression variables have tails heavier than the normal distribution, and about 30.2\% are severely heavy-tailed with tails flatter than $t_5$, the $t$-distribution with 5 degrees of freedom. Similarly, about 36.5\% of the gene expression variables, even after the $\log_2$-transformation, still exhibit empirical kurtosises larger than that of $t_5$.  This suggests that, regardless of the normalization methods used, genomic data can still exhibit heavy-tailedness, which was also pointed out by \cite{PH2005}.

\begin{figure}[t!]
	\includegraphics[scale=.6]{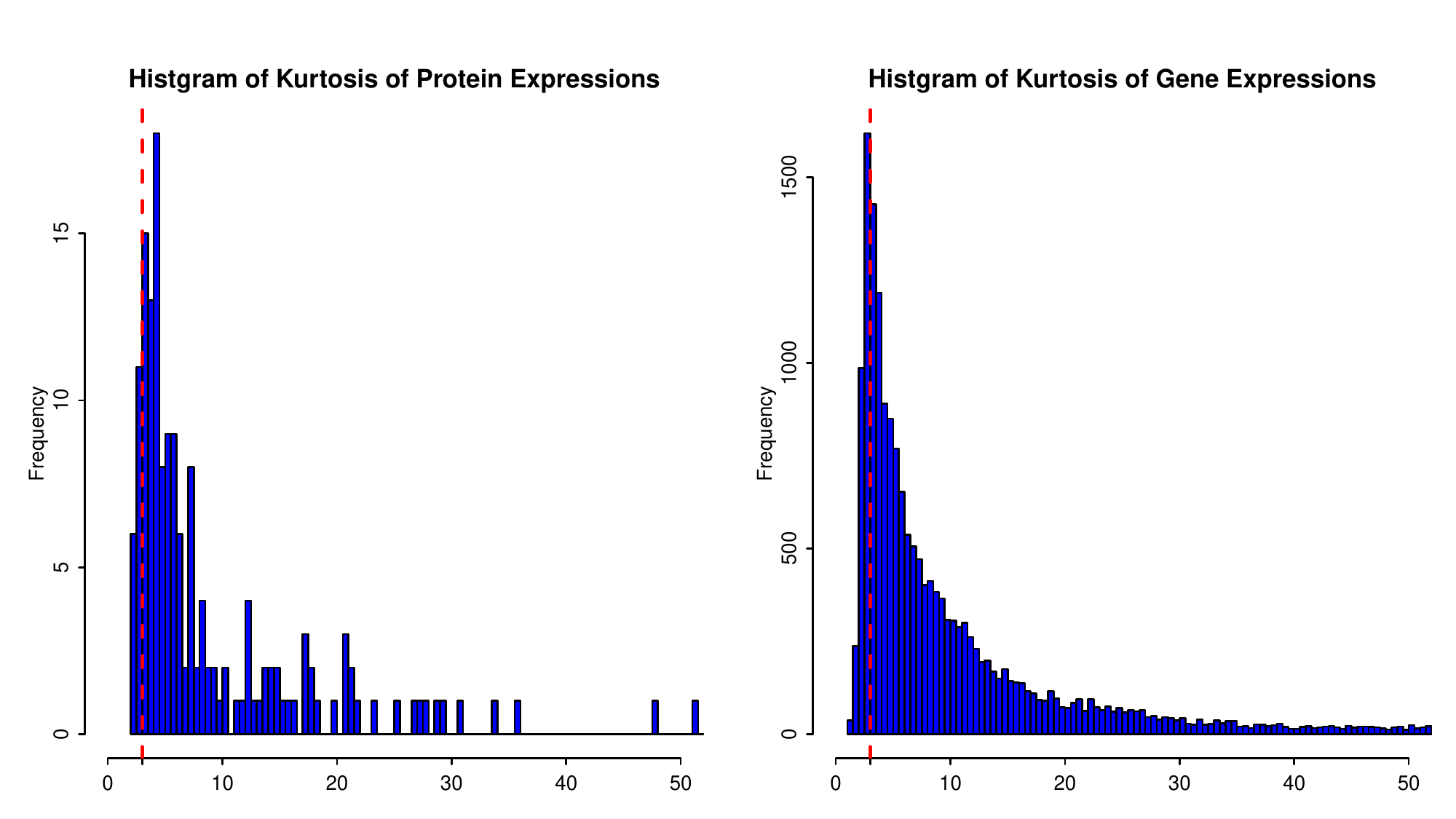}\centering\\
	\caption{Histogram of kurtosises for the protein and gene expressions. The dashed red line at 3 is the kurtosis of a normal distribution.}  \label{fig:hist}
\end{figure}

We  order the protein expression variables according to their scales, measured by the standard deviation. We show the results for the protein expressions based on the KRT19 antibody, the protein {keratin 19}, which constitutes the variable with the largest standard deviation, serving as one dependent variable. KRT19,  a type I keratin, also known as Cyfra 21-1, is encoded by the {\it KRT19} gene. 
  Due to its high sensitivity, the KRT19 antibody is the most used marker for  the  tumor cells disseminated in lymph nodes, peripheral blood, and bone marrow of breast cancer patients \citep{nakata2004serum}.   We denote the adaptive Huber regression as AHuber, and that with truncated covariates as TAHuber.  We then compare {AHuber} and {TAHuber} with Lasso. Both regularization and robustification parameters are chosen by the ten-fold cross-validation.

To measure the predictive performance, we consider a robust prediction loss:  the mean absolute error (MAE) defined as
  \$
 \textnormal{MAE}\big(\widehat\bbeta\big)=\frac{1}{n_{\textnormal{test}}}\sum_{i=1}^{n_{\textnormal{test}}}\big| y_{i}^{\textnormal {test}}- \langle \bx_{i}^{\textnormal{test}} , \widehat\bbeta \rangle \big|,
 \$
 where $y^{\textnormal{test}}_i$ and $\bx^{\textnormal{test}}_i$, $i=1,\ldots,n_{\textnormal{test}}$, denote the observations of the response and predictor variables in the test data, respectively. We report the MAE via the leave-one-out cross-validation. Table \ref{tab:1}  reports the MAE, model size and selected genes for the considered methods. TAHuber clearly shows the smallest MAE, followed by AHuber and Lasso. The Lasso produces a fairly large model despite the small sample. Now it has been recognized that Lasso tends to select many noise variables along with the significant ones, especially when data exhibit heavy tails.

\begin{table}[t]
\setlength{\belowcaptionskip}{5pt}	
	\begin{center}
	\caption{We report the mean absolute error (MAE) for protein expressions based on the KRT19 antibody from the NCI-60 cancer cell lines, computed from leave-one-out cross-validation. We also report the model size and selected genes for each method.} 	\label{tab:1}%
\begin{tabular}{lccp{10.3cm}}
		\toprule
		Method & MAE   & Size  & Selected Genes  \\
		\midrule
		Lasso &7.64 & 42 & {\it FBLIM1,   MT1E,     EDN2,     F3,       FAM102B,  S100A14, LAMB3,    EPCAM,    FN1,      TM4SF1,   UCHL1,  NMU,      ANXA3,    PLAC8,    SPP1,     TGFBI,    CD74,     GPX3,     EDN1,     CPVL,     NPTX2,    TES,     AKR1B10,  CA2,      TSPYL5,   MAL2,     GDA,      BAMBI,    CST6,     ADAMTS15, DUSP6,    BTG1,     LGALS3,  IFI27,    MEIS2,   TOX3,     KRT23,    BST2,     SLPI,     PLTP,     XIST,     NGFRAP1}\\
		AHuber &6.74 &11 & {\it MT1E, ARHGAP29, CPCAM, VAMP8, MALL, ANXA3, MAL2, BAMBI, LGALS3, KRT19, TFF3}\\
                 TAHuber & 5.76 & 7& {\it MT1E, ARHGAP29, MALL, ANXA3, MAL2, BAMBI, KRT19}\\
                 \bottomrule
\end{tabular}%
\end{center}
\end{table}%

  The Lasso selects a model with 42 genes but excludes the {\it KRT19} gene, which encodes the protein {keratin 19}. {AHuber} finds  11 genes including {\it KRT19}. 
   {TAHuber}  results in a model with 7 genes:  {\it KRT19, MT1E, ARHGAP29, MALL, ANXA3, MAL2, BAMBI}. First, {\it KRT19} encodes the {keratin 19} protein. It has been reported in \cite{wu2008overlapping} that the {\it MT1E} expression is positively correlated with cancer cell migration and tumor stage, and the {\it MT1E}  isoform was found to be present in estrogen receptor-negative breast cancer cell lines \citep{friedline1998differential}.   {\it ANXA3} is highly expressed in all colon  cell lines and all breast-derived cell lines positive for the oestrogen receptor \citep{ross2000systematic}. A very recent study in \cite{zhou2017silencing} suggested that  silencing the {\it ANXA3} expression by RNA interference inhibits the proliferation and invasion of breast cancer cells.  Moreover, studies in \cite{shangguan2012inhibition} and \cite{kretzschmar2000transforming} showed that the {\it BAMBI} transduction significantly inhibited TGF-$\beta$/Smad signaling and expression of carcinoma-associated fibroblasts in human bone marrow mesenchymal stem cells (BM-MSCs), 
 and disrupted the cytokine network mediating the interaction between MSCs and breast cancer cells. Consequently, the {\it BAMBI} transduction abolished protumor effects of BM-MSCs in vitro and in an orthotopic breast cancer xenograft model, and instead significantly inhibited growth and metastasis of coinoculated cancer. {\it MAL2} expressions were shown to be elevated at both RNA and protein levels in breast cancer \citep{shehata2008nonredundant}. It has also been shown that {\it MALL} is associated with various forms of cancer \citep{oh2005transcriptome, landi2014genome}. However, the effect of {\it ARHGAP29} and {\it MALL} on breast cancer remains unclear and is worth further investigation.

\section*{Supplementary Materials}

In the supplementary materials, we provide theoretical analysis under random designs, and proofs of all the theoretical results in this paper. 

\section*{Acknowledgments}
The authors  thank  the Editor, Associate Editor, and two anonymous referees for their valuable comments. This work is supported by a Connaught Award,   NSERC Grant RGPIN-2018-06484, NSF Grants DMS-1662139, DMS-1712591, and DMS-1811376, NIH Grant 2R01-GM072611-14, and NSFC Grant 11690014.

\section*{Appendix} 
\appendix

\section{A Lepski-type method}

Adapting the unknown robustification parameter depends on the value of the variance provided it exists. Through Lepski's renowned adaptation method \citep{Lepski1991}, this can be done without actually knowing the variance in advance.
Assume that $v_1 =  n^{-1} \sn \EE(\varepsilon_i^2) <\infty$ and let $\sigma_{\max}, \sigma_{\min}>0$ be such that
$
	\sigma_{\min} \leq v_1^{1/2} \leq \sigma_{\max}.
$
Here, parameters $\sigma_{\max}$ and $\sigma_{\min}$ serve as crude preliminary upper and lower bounds for $v_1^{1/2}$, respectively.

For a prespecified $a>1$, let $\sigma_j = \sigma_{\min} a^j$ and define the set
$$
	\cJ = \cJ_a  = \big\{ j= 0,1,2,\ldots : \sigma_{\min} \leq \sigma_j  < a \sigma_{\max} \big\}
$$
with its cardinality satisfying ${\rm card}(\cJ) \leq 1+\log_a(\sigma_{\max} / \sigma_{\min})$. For every predetermined $t>0$, compute a collection of Huber estimators $\{\hat{\bbeta}_{\tau_j} \}_{j\in \cJ}$, where $\tau_j = \sigma_j (n/t)^{1/2}$ for $j\in \cJ$. Set
\#
	\hat j = \min\Bigg\{  j \in \cJ : \big\| \Sb_n^{1/2}(\hat{\bbeta}_{\tau_k} - \hat{\bbeta}_{\tau_j} ) \big\|_2 \leq 8 \wt L \sigma_j \,d^{1/2} \sqrt{\frac{t}{n}} ~\mbox{ for all } k>j , k\in \cJ \Bigg\} , \nn
\#
where $\wt L : = \max_{1\leq i\leq n} \| \Sb_n^{-1/2} \bx_i \|_\infty$ assuming $\Sb_n = n^{-1} \sn \bx_i \bx_i^\T$ is positive definite.
The final data-driven estimator is then defined as $\hat{\bbeta} = \hat{\bbeta}_{\tau_{\hat j}}$.

\begin{theorem} \label{thm:lepski}
For any $t>0$, the data-dependent estimator $\hat \bbeta$ satisfies the bound
\#
	   \big\| \Sb_n^{1/2} ( \hat \bbeta  -\bbeta^* ) \big\|_2 \leq 12 a \wt L v_1^{1/2} d^{1/2} \sqrt{\frac{t}{n}}  \label{lepski.bound}
\#
with probability at least $1- (2d+1) \log_a(a \sigma_{\max} / \sigma_{\min}) e^{-t}$, provided the sample size satisfies $n\geq  8 \max  (4 \wt L^2 d , \wt L^4 d^2 ) t$.
\end{theorem}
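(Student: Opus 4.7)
The plan is a standard Lepski-type balancing argument. Define the oracle index $j^\star = \min\{ j \in \cJ : \sigma_j \geq v_1^{1/2} \}$, so that by definition $v_1^{1/2} \leq \sigma_{j^\star} \leq a\,v_1^{1/2}$. The two claims to prove on a good event are (i) $j^\star$ itself passes the comparison test in the definition of $\hat j$, so that $\hat j \leq j^\star$; and (ii) the selection rule applied at $\hat j$ combined with one triangle inequality yields \eqref{lepski.bound}.

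The essential input beyond Theorem~\ref{thm:ld} is a scale-free concentration bound in the prediction norm: for every $j \in \cJ$ with $\sigma_j \geq v_1^{1/2}$, with probability at least $1 - (2d+1)e^{-t}$,
\#
\big\| \Sb_n^{1/2}( \hat{\bbeta}_{\tau_j} - \bbeta^* ) \big\|_2 \leq 4 \wt L \, v_1^{1/2} d^{1/2} \sqrt{t/n}. \label{sharp}
\#
The right-hand side is proportional to $v_1^{1/2}$ rather than to the user-chosen $\sigma_j$. This sharpening of Theorem~\ref{thm:ld} is obtained by repeating its proof in the whitened variables $\wt\bx_i = \Sb_n^{-1/2}\bx_i$: the score $-n^{-1}\sum_i \ell'_{\tau_j}(\varepsilon_i)\wt\bx_i$ has coordinate variance at most $\EE[\ell'_{\tau_j}(\varepsilon_i)^2]\leq v_1$ uniformly in $\tau_j$, so Bernstein's inequality bounds its $\ell_2$-norm by $\wt L v_1^{1/2}\sqrt{dt/n}$, while Proposition~\ref{prop:error} controls the bias $\|\bbeta_{\tau_j}^* - \bbeta^*\|_2 \lesssim v_1/\tau_j \leq v_1^{1/2}\sqrt{t/n}$, which is of the same or smaller order. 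The scaling $n \geq 8\max(4\wt L^2 d, \wt L^4 d^2)t$ is precisely what is needed to run the localization step behind Theorem~\ref{thm:ld}.

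A union bound of \eqref{sharp} over the at most $\log_a(a\sigma_{\max}/\sigma_{\min})$ indices in $\cJ$ produces an event $\mathcal{E}$ of the probability advertised in the theorem. On $\mathcal{E}$, for any $k > j^\star$ in $\cJ$, the triangle inequality and $v_1^{1/2}\leq \sigma_{j^\star}$ give $\|\Sb_n^{1/2}(\hat{\bbeta}_{\tau_k} - \hat{\bbeta}_{\tau_{j^\star}})\|_2 \leq 8\wt L\, v_1^{1/2} d^{1/2}\sqrt{t/n}\leq 8\wt L\, \sigma_{j^\star} d^{1/2}\sqrt{t/n}$, so $j^\star$ passes the test in the definition of $\hat j$ and hence $\hat j \leq j^\star$. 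Applying the selection criterion at $\hat j$ with $k = j^\star$ (trivially if $\hat j = j^\star$) and using $\sigma_{\hat j} \leq \sigma_{j^\star} \leq a\, v_1^{1/2}$, one more triangle inequality combined with \eqref{sharp} at $j^\star$ yields $\|\Sb_n^{1/2}(\hat{\bbeta} - \bbeta^*)\|_2 \leq (4+8a)\wt L v_1^{1/2} d^{1/2}\sqrt{t/n} \leq 12 a \wt L v_1^{1/2} d^{1/2}\sqrt{t/n}$ since $a>1$, which is \eqref{lepski.bound}.

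The main obstacle is the sharpening \eqref{sharp}. Applying Theorem~\ref{thm:ld} verbatim with $\tau_0=\sigma_j$ produces a right-hand side of order $\sigma_j$; then in $\|\hat{\bbeta}_{\tau_k} - \hat{\bbeta}_{\tau_{j^\star}}\|$ a residual $\sigma_k$ would remain that cannot be absorbed into the $8\wt L\sigma_{j^\star}$ tolerance of the selection rule, and $j^\star$ could fail the test. The refinement comes from observing that the crude bound $|\ell'_\tau|\leq \tau$ enters only in the higher-order Bernstein correction term, whereas the dominant variance contribution is controlled uniformly by $\EE[\ell'_\tau(\varepsilon)^2]\leq v_1$, independently of $\tau$. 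Once \eqref{sharp} is secured, the rest of the argument is a routine Lepski bookkeeping calculation.
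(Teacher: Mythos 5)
Your overall Lepski bookkeeping (define $j^\star$, show $\hat j \le j^\star$ on a good event, finish with one triangle inequality) matches the paper's, but the technical engine you rely on --- the ``scale-free'' bound $\| \Sb_n^{1/2}(\hat{\bbeta}_{\tau_j}-\bbeta^*)\|_2 \le 4\wt L\, v_1^{1/2} d^{1/2}\sqrt{t/n}$ uniformly over all $j\in\cJ$ with $\sigma_j\ge v_1^{1/2}$ --- is false, and your argument for it does not close. The deviation bound for the score $n^{-1}\sum_i \psi_{\tau_j}(\varepsilon_i)\wt\bx_i$ (whether via Bernstein or the Catoni-type exponential-moment argument used in the proof of Theorem~\ref{thm:ld}) has a variance term of order $\wt L\sqrt{v_1 d t/n}$ \emph{and} a linear term of order $\wt L\tau_j \sqrt{d}\,t/n$. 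With $\tau_j=\sigma_j\sqrt{n/t}$ the linear term equals $\wt L\sigma_j\sqrt{dt/n}$: it is of exactly the same order in $n$ and $t$ as the variance term, not ``higher-order,'' and it scales with $\sigma_j$ rather than $v_1^{1/2}$. Nor is this an artifact of the inequality: take $\varepsilon=\pm\tau_j$ with probability $v_1/(2\tau_j^2)$ each and $0$ otherwise, so that $\mathrm{var}(\varepsilon)=v_1$; at confidence level $e^{-t}$ the truncated score deviates by order $\sigma_j\sqrt{t/n}$ up to a logarithmic factor, which exceeds $4 v_1^{1/2}\sqrt{t/n}$ once $\sigma_j\gg v_1^{1/2}$ --- and indices with $\sigma_j$ near $a\sigma_{\max}$ do occur in $\cJ$, since $\sigma_{\max}$ is only a crude upper bound. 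So your key display cannot be proved, and with it the step ``$j^\star$ passes the comparison test'' collapses.

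The repair is precisely the route you dismissed. The paper's proof uses the $\sigma_j$-dependent bound $\|\Sb_n^{1/2}(\hat{\bbeta}_{\tau_j}-\bbeta^*)\|_2\le 4\wt L\sigma_j d^{1/2}\sqrt{t/n}$ for every $j\ge j^\star$ (Theorem~\ref{thm:ld} rerun in the whitened coordinates $\Sb_n^{-1/2}\bx_i$), and the comparison tolerance $8\wt L\sigma_{\boldsymbol{\cdot}}$ is evaluated at the \emph{larger} of the two indices in each pair: for $k>j^\star$ one gets $4\wt L(\sigma_k+\sigma_{j^\star})\le 8\wt L\sigma_k$, so $j^\star$ passes, and the large scales $\sigma_k$ never contaminate the final bound because the criterion is only invoked once more, at the pair $(\hat j, j^\star)$ with $\hat j\le j^\star$, where the tolerance is $8\wt L\sigma_{j^\star}\le 8a\wt L v_1^{1/2}$; adding the bound at $j^\star$ gives $12\wt L\sigma_{j^\star}\le 12a\wt L v_1^{1/2}$. (The printed definition of $\hat j$ attaches $\sigma_j$ to the candidate index, which may be what pushed you toward the unprovable sharpening; the proof only goes through with the tolerance indexed by the larger element of the pair.)
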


Lepski-type construction relies on preliminary crude upper and lower bounds for $v_1^{1/2}$, which are usually unknown in advance.
In practice, one can take $\sigma_{\min} = \hat \sigma / K$ and $\sigma_{\max} = K \hat \sigma$ for some $K > 1$, where $\hat \sigma^2 := (n-d)^{-1} \sn (y_i - \langle \bx_i, \hat \bbeta^{{\rm ols}} \rangle )^2$ and $ \hat \bbeta^{{\rm ols}}$ is the least squares estimator.
Moreover, one may choose $a =1.5$ and $t=\log n$ or $\log(nd)$.
However, the effectiveness of this method depends on how sharp the  constants are in the theoretical bounds.
We note that all constants in Theorems 1 and \ref{thm:lepski} are explicit, although they might not be sharp.
Finding sharp constants remains open.
Since the current content already consists of long and technical arguments, we will not pursue this particular goal in this paper.


\begin{proof}[Proof of Theorem~\ref{thm:lepski}]
Following the proof of Theorem~\ref{thm:ld} which is given in Appendix~\ref{sec:1}, it can be similarly proved that, for any $\tau = \tau_0 (n/t)^{1/2}$ with $\tau_0\geq v_1^{1/2}$,
\#
	\big\| \Sb_n^{1/2} (\hat \bbeta_\tau - \bbeta^* ) \big\|_2 \leq 4  \wt L \tau_0 \, d^{1/2} \sqrt{\frac{t}{n}}  \label{new.fix.l2bound}
\#
with probability at least $1- (2d+1)e^{-t}$ as long as $n \geq  8 \max  (4 \wt L^2 d , \wt L^4 d^2 ) t$.

Let $j^* = \min\{ j \in \cJ: \sigma_j \geq v_1^{1/2}\}$ and note that $v_1^{1/2} \leq \sigma_{j^*} \leq a v_1^{1/2}$. By the definition of $\hat j$,
\#
	\{ \hat j > j^* \}  & \subseteq \bigcup_{j \in \cJ: j > j^*}  \Bigg\{  \big\|   \Sb_n^{1/2} ( \hat \bbeta_{\tau_j} - \hat \bbeta_{\tau_{j^*}}) \big\|_2 > 8 \wt L \sigma_j \, d^{1/2} \sqrt{\frac{t}{n}} \Bigg\}  \nn \\
& \subseteq \bigcup_{j \in \cJ: j \geq  j^*}  \Bigg\{  \big\|   \Sb_n^{1/2} ( \hat \bbeta_{\tau_j} -  \bbeta^* ) \big\|_2 > 4 \wt L \sigma_j \, d^{1/2} \sqrt{\frac{t}{n}} \Bigg\} . \nn
\#
Define the event
$$
	\cE = \bigcap_{j \in \cJ: j \geq  j^*}  \Bigg\{  \big\|   \Sb_n^{1/2} ( \hat \bbeta_{\tau_j} -  \bbeta^* ) \big\|_2 \leq  4 \wt L \sigma_j \, d^{1/2} \sqrt{\frac{t}{n}} \Bigg\}
$$
such that $\cE \subseteq \{ \hat j \leq j^* \}$. From \eqref{new.fix.l2bound} we see that for each $j\geq j^*$,
$$
\big\|   \Sb_n^{1/2} ( \hat \bbeta_{\tau_j} -  \bbeta^* ) \big\|_2 \leq  4 \wt L \sigma_j \, d^{1/2} \sqrt{\frac{t}{n}}
$$
with probability at least $1- (2d+1)e^{-t}$ under the prescribed sample size scaling.  By the union bound, we obtain that
\#
	\PP(\cE^{{\rm c}})& \leq \sum_{j\in \cJ: j\geq j^*}   \PP\Bigg\{  \big\|   \Sb_n^{1/2} ( \hat \bbeta_{\tau_j} -  \bbeta^* ) \big\|_2 > 4 \wt L \sigma_j \, d^{1/2} \sqrt{\frac{t}{n}} \Bigg\} \nn \\
&  \leq (2d+1) | \cJ | e^{-t}  \leq (2d+1) \{ 1 + \log_a(\sigma_{\max}/\sigma_{\min}) \} e^{-t} . \nn
\#
On the event $\cE$, $\hat j \leq j^*$ and thus
\#
		 \big\| \Sb_n^{1/2} ( \hat \bbeta  -\bbeta^* ) \big\|_2  & \leq  \big\| \Sb_n^{1/2} ( \hat \bbeta_{\tau_{\hat j}}  - \hat \bbeta_{\tau_{j^*}} ) \big\|_2 +  \big\| \Sb_n^{1/2} ( \hat \bbeta_{\tau_{j^*}}  -\bbeta^* ) \big\|_2 \nn \\
& \leq    8 \wt L \sigma_{j^*}\,d^{1/2} \sqrt{\frac{t}{n}} + 4 \wt L \sigma_{j^*} \,d^{1/2} \sqrt{\frac{t}{n}} \leq 12a \wt L v_1^{1/2} d^{1/2} \sqrt{\frac{t}{n}} . \nn
\#
Together, the last two displays yield \eqref{lepski.bound}.
\end{proof}

\section{Random Design Analysis}\label{sec:0}


In this section, we derive counterparts of the results in Section~3 under random designs.
First we impose the following moment conditions on the covariates and regression errors.

\begin{cond}\label{ass:3.1}
In linear model~\eqref{linear.model}, the covariate vectors $\bx_i \in \RR^d$ are i.i.d. from a sub-Gaussian random vector $\bx$, i.e. $\PP ( | \langle \bu, \wt \bx \rangle | \geq  y )  \leq 2 \exp (- y^2 \| \bu \|_2^2 / A_0^2  )$ for all $y\in \RR$ and $\bu \in \RR^d$, where $\wt \bx = \bSigma^{-1/2}\bx$ with $\bSigma = (\sigma_{jk})_{1\leq j,k\leq d} = \EE(\bx  \bx^\T)$ being positive definite and $A_0>0$ is a constant.
The regression errors $\varepsilon_i$ are independent and satisfy $\EE(\varepsilon_i |\bx_i)=0$ and $v_{i, \delta} = \EE(|\varepsilon_i|^{1+\delta} | \bx_i) <\infty$ almost surely for some $\delta>0$.
\end{cond}

Throughout this section, for simplicity, we assume the independent regression errors $\varepsilon_i$ in model \eqref{linear.model} are homoscedastic in the sense that $v_{i,\delta}$ does not depend on $\bx_i$.
The conditional heteroscedastic model can be allowed with slight modifications as before.
With this setup, we write
\#
 v_\delta= \frac{1}{n} \sn v_{i,\delta} ~~\mbox{ and }~~  \nu_\delta = \min \{v_\delta^{1/(1+\delta)}, v_1^{1/2} \} , \ \  \delta >0 . \nn
\#
Assuming the $d\times d$ matrix $\bSigma = \EE(\bx \bx^\T)$ is positive definite, we use $\| \cdot \|_{\bSigma,2}$ to denote the rescaled $\ell_2$-norm on $\RR^d$:
$$
	\| \bu \|_{\bSigma, 2} = \| \bSigma^{1/2} \bu \|_2 , \quad  \bu \in \RR^d.
$$
Moreover, we use $\psi_\tau$ to denote the derivative of Huber loss, that is,
\#
	\psi_\tau(x) = \ell_\tau'(x) =  \sgn(x) \min (|x| , \tau ) , \ \ x\in \RR. \label{psi.def}
\#

\subsection{Huber regression in low dimensions}

In the low dimensional regime ``$d\ll n$",  we consider the Huber estimator
\#
	\hat \bbeta_\tau = \arg\min_{\bbeta \in \RR^d} \cL_\tau(\bbeta ) ,   \nn
\#
where $\cL_\tau(\bbeta) = n^{-1} \sn \ell_\tau( y_i - \langle \bx_i, \bbeta \rangle )$ is the empirical Huber loss function and $\tau>0$ is the robustification parameter. Under Condition~\ref{ass:3.1}, the following theorem provides (i) exponential-type concentration inequalities for $\hat \bbeta_\tau$ when $\tau$ is properly calibrated, and (ii) a nonasymptotic Bahadur representation result under the finite variance condition on regression errors, i.e. $\delta=1$.

\begin{theorem}\label{thm:A1}
Suppose Condition~\ref{ass:3.1} holds.
\begin{itemize}
\item[(\Rom{1})] For any $t >0$ and $\tau_0 \geq \nu_\delta$, the estimator $\hat{\bbeta}_\tau$ with $\tau =\tau_0  \{ n/(d+t) \}^{\max\{1/(1+\delta),1/2\}}$ satisfies
\#
  \PP\Bigg\{ \big\|   \hat\bbeta_\tau - \bbeta^*   \big\|_{\bSigma, 2}  \geq  C_1   \tau_0 \bigg( \frac{d+t}{n} \bigg)^{  \min\{ \delta/(1+\delta) , 1/2 \} } \Bigg\} \leq 2 e^{-t} \label{CI.type2}
\#
as long as $n \geq C_2 (d+t)$, where $C_1 , C_2>0$ depend only on $A_0$.

\item[(\Rom{2})] Assume that $v_1 <\infty$. For any $t >0$ and $\tau_0 \geq v_1^{1/2}$, the estimator $\hat{\bbeta}_\tau$ with $\tau  = \tau_0  \sqrt{n/(d+t)}$ satisfies
\begin{align}
	\PP \Bigg\{ \bigg\|   \bSigma^{1/2} ( \hat\bbeta_\tau - \bbeta^*  ) -   \frac{1}{n}\sn \psi_\tau(\varepsilon_i)  \wt \bx_i  \bigg\|_2  \geq  C_3 \tau_0  \frac{d+t}{n}     \Bigg\} \leq  3 e^{-t}   \label{BR}
\end{align}
provided $n\geq C_2  (d+t)$, where $C_3>0$ depends only on $A_0$.
\end{itemize}
\end{theorem}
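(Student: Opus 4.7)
The plan is the standard three-step program for convex $M$-estimators adapted to the adaptive-Huber setting, with a one-step linearization added for Part~(II). Write $\psi_\tau = \ell_\tau'$, $\bDelta = \hat\bbeta_\tau - \bbeta^*$, and $\wt\bx_i = \bSigma^{-1/2}\bx_i$, so that $\EE \wt\bx_i\wt\bx_i^\T = I$ and $\wt\bx_i$ is sub-Gaussian with constant $A_0$. The first ingredient is a score bound at $\bbeta^*$:
\[
\bigl\|\bSigma^{-1/2}\nabla\cL_\tau(\bbeta^*)\bigr\|_2 = \sup_{\|\bu\|_2=1}\Bigl|\frac{1}{n}\sum_{i=1}^n\psi_\tau(\varepsilon_i)\langle\bu,\wt\bx_i\rangle\Bigr|\lesssim \tau_0\bigl\{(d+t)/n\bigr\}^{\min\{\delta/(1+\delta),1/2\}}.
\]
Pointwise in $\bu$, $\psi_\tau(\varepsilon_i)\langle\bu,\wt\bx_i\rangle$ is sub-exponential with envelope $\tau|\langle\bu,\wt\bx_i\rangle|$ and variance at most $\EE\psi_\tau(\varepsilon)^2\leq \min\{v_1, \tau^{1-\delta}v_\delta\}$ via $\psi_\tau(x)^2\leq |x|^{1+\delta}\tau^{1-\delta}$. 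Bernstein's inequality together with a standard $\epsilon$-net of $\mathbb{S}^{d-1}$ controls the stochastic piece. The deterministic bias $|\EE\psi_\tau(\varepsilon)|\leq v_\delta/\tau^\delta$ and the calibration $\tau = \tau_0\{n/(d+t)\}^{\max\{1/(1+\delta),1/2\}}$ exactly match the Bernstein tail at the claimed order.

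The second ingredient is a local restricted strong convexity (RSC) for the Huber Hessian $\nabla^2\cL_\tau(\bbeta) = n^{-1}\sum_i\mathbf{1}\{|\varepsilon_i-\langle\bx_i,\bbeta-\bbeta^*\rangle|\leq\tau\}\bx_i\bx_i^\T$: I would show $\langle\bu,\bSigma^{-1/2}\nabla^2\cL_\tau(\bbeta)\bSigma^{-1/2}\bu\rangle\geq \kappa_l>0$ uniformly over $\|\bu\|_2=1$ and $\|\bbeta-\bbeta^*\|_{\bSigma,2}\leq r$. The indicator is lower-bounded by $\mathbf{1}_{|\varepsilon_i|\leq\tau/2}\mathbf{1}_{|\langle\wt\bx_i,\bSigma^{1/2}(\bbeta-\bbeta^*)\rangle|\leq\tau/2}$, reducing the matrix to $n^{-1}\sum_i\wt\bx_i\wt\bx_i^\T$ minus two defect terms whose expectations are small (Markov for $\varepsilon_i$, sub-Gaussian tails of $\wt\bx_i$); sub-Gaussian operator-norm concentration of the sample covariance handles the main term, and the defects are made uniform in $\bbeta$ by a peeling argument on $\|\bbeta-\bbeta^*\|_{\bSigma,2}$ combined with nets on $\bu$ and on the direction of $\bbeta-\bbeta^*$. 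The third ingredient is the familiar convexity/projection trick: with $\bar\bbeta = \bbeta^* + \min\{1,r/\|\bDelta\|_{\bSigma,2}\}\bDelta$, convexity of $\cL_\tau$ and optimality give $\cL_\tau(\bar\bbeta)\leq\cL_\tau(\bbeta^*)$, so the Bregman divergence satisfies $D_{\cL_\tau}(\bar\bbeta,\bbeta^*)\leq \|\bSigma^{-1/2}\nabla\cL_\tau(\bbeta^*)\|_2\|\bar\bbeta-\bbeta^*\|_{\bSigma,2}$ by Cauchy--Schwarz. Combining with RSC and choosing $r$ to exceed the intended error bound forces $\bar\bbeta=\hat\bbeta_\tau$, proving Part~(I).

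For Part~(II), I would linearize the score. First-order optimality $\nabla\cL_\tau(\hat\bbeta_\tau)=0$ and the identity $\psi_\tau(\varepsilon_i)-\psi_\tau(\varepsilon_i-\langle\bx_i,\bDelta\rangle) = \langle\bx_i,\bDelta\rangle\int_0^1\mathbf{1}\{|\varepsilon_i - s\langle\bx_i,\bDelta\rangle|\leq\tau\}\,ds$ together yield $\bar M(\bDelta)\bDelta=-\nabla\cL_\tau(\bbeta^*)$ for an integrated random Hessian $\bar M(\bDelta)$. Rearranging,
\[
\bSigma^{1/2}\bDelta - \frac{1}{n}\sum_{i=1}^n\psi_\tau(\varepsilon_i)\wt\bx_i = \bigl[(\bSigma^{-1/2}\bar M(\bDelta)\bSigma^{-1/2})^{-1}-I\bigr]\cdot\frac{1}{n}\sum_{i=1}^n\psi_\tau(\varepsilon_i)\wt\bx_i.
\]
The second factor is $O(\tau_0\sqrt{(d+t)/n})$ by ingredient~(i), so it suffices to show $\|\bSigma^{-1/2}\bar M(\bDelta)\bSigma^{-1/2}-I\|_{\mathrm{op}}\lesssim \sqrt{(d+t)/n}$; a Neumann series then passes the bound to the inverse. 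This splits into a deterministic bias $\lesssim (v_1^{1/2}+A_0\|\bDelta\|_{\bSigma,2})/\tau\asymp \sqrt{(d+t)/n}$ (Markov on the complementary indicator) and a stochastic fluctuation handled by matrix Bernstein after truncating $\wt\bx_i$ at scale $\sqrt{d+t}$, using the Part~(I) consistency $\|\bDelta\|_{\bSigma,2}\lesssim \tau_0\sqrt{(d+t)/n}$. The main obstacle is ingredient~(ii): the random Hessian depends on $\bbeta$ discontinuously through indicator functions, so turning pointwise concentration into a uniform operator-norm bound over a $\|\cdot\|_{\bSigma,2}$-ball while preserving the linear scaling $n\gtrsim d+t$ (rather than the quadratic $n\gtrsim d^2$ of the fixed-design case) requires a delicate peeling-plus-net argument that fully exploits the sub-Gaussian tails of $\wt\bx_i$, and the calibration between $\tau$, the RSC radius $r$, and the consistency bound must be chosen coherently so that the Bahadur remainder hits the advertised order $\tau_0(d+t)/n$.
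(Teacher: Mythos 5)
Your three-ingredient architecture for Part~(\Rom{1}) --- a score bound at $\bbeta^*$, a local restricted strong convexity statement, and localization via the convexity/projection trick --- is exactly the paper's, and your score bound coincides with the paper's Lemma~B.5 (Bernstein's inequality plus a $1/2$-net of $\mathbb{S}^{d-1}$, with the deterministic bias $v_\delta\tau^{-\delta}$ matched to the stochastic term by the choice of $\tau$). Where you genuinely diverge is in the two uniform-concentration steps, and the comparison is instructive. For RSC you propose to lower-bound the indicator-weighted Hessian in operator norm uniformly over a ball, via peeling and nets; the paper instead works with the first-order quantity $\langle\nabla\cL_\tau(\bbeta)-\nabla\cL_\tau(\bbeta^*),\bbeta-\bbeta^*\rangle$, replaces the non-smooth indicator by a Lipschitz surrogate $\phi_R$, and controls the resulting supremum by Bousquet's concentration inequality, symmetrization, and a Gaussian comparison (Sudakov--Fernique) step that reduces the quartic empirical process to a \emph{linear} Gaussian one --- this is precisely what delivers the clean scaling $n\gtrsim d+t$ with no logarithmic loss. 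In your route, the defect term $n^{-1}\sum_i\langle\bu,\wt\bx_i\rangle^2\,1\{|\langle\wt\bx_i,\bdelta\rangle|>\tau/2\}$, once Markov-bounded, becomes a fourth-moment process $\sup_{\bu,\bdelta}n^{-1}\sum_i\langle\bu,\wt\bx_i\rangle^2\langle\wt\bx_i,\bdelta\rangle^2$ whose uniform control for sub-Gaussian designs via nets alone typically costs extra factors; making your sketch rigorous at linear scaling would essentially force you to rediscover the paper's smoothing-plus-comparison device. For Part~(\Rom{2}), your multiplicative linearization $\bSigma^{1/2}\bDelta-\bSigma^{-1/2}\nabla\cL_\tau(\bbeta^*)\cdot(-1)=(G^{-1}-\Ib_d)\,(-\bSigma^{-1/2}\nabla\cL_\tau(\bbeta^*))$ with $G$ the normalized integrated Hessian is algebraically correct and, combined with the Neumann series and the Part~(\Rom{1}) rate, yields the advertised order $\tau_0(d+t)/n$; but it again reduces to uniform operator-norm control of an indicator-weighted Hessian. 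The paper instead bounds the \emph{additive} remainder $\sup_{\bbeta\in\Theta_0(r)}\|\bB(\bbeta)-\EE\bB(\bbeta)\|_2$ for $\bB(\bbeta)=\bSigma^{-1/2}\{\nabla\cL_\tau(\bbeta)-\nabla\cL_\tau(\bbeta^*)\}-\bSigma^{1/2}(\bbeta-\bbeta^*)$ using Spokoiny's exponential-moment chaining theorem for vector-valued processes, which only requires moment-generating-function control of the derivative of the process and sidesteps operator norms of truncated sample covariances entirely. In short: your plan is sound and reaches the right rates, but the one step you flag as the ``main obstacle'' is exactly where the paper's specialized machinery (smoothed indicators, Bousquet, Gaussian comparison, Spokoiny) does the heavy lifting that ``peeling plus nets'' would struggle to replicate without degrading the sample-size requirement.
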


With random designs, the first part of  Theorem~\ref{thm:A1} provides concentration inequalities for the $\ell_2$-error under finite $(1+\delta)$-th moment conditions with $\delta>0$; when the second moments are finite, the second part gives a finite-sample approximation of $\hat{\bbeta}_\tau - \bbeta^*$ by a sum of independent random vectors. The remainder of such an approximation exhibits sub-exponential tails.  Unlike the least squares estimator, the adaptive Huber estimator does not admit an explicit closed-form
representation, which causes the main difficulty for analyzing its asymptotic and nonasymptotic properties.
Theorem~\ref{thm:A1} reveals that, up to a higher-order remainder, the distributional property of $\hat \bbeta_\tau$ mainly depends on a linear stochastic term that is much easier to deal with.

Regarding the truncated random variable $\psi_\tau(\varepsilon_i)$, the following result shows that the differences between the first two moments of $\psi_\tau(\varepsilon_i)$ and $\varepsilon_i$ depend on both $\tau$ and the moments of $\varepsilon_i$. The higher moment $\varepsilon_i$ has, the faster these differences decay as a function of $\tau$. We summarize this observation in the following  proposition.   We drop $i$ for ease of presentation.

\begin{proposition}  \label{Prop2}
Assume that $\EE(\varepsilon)=0$, $\sigma^2 = \EE(\varepsilon^2)>0$ and $\EE  (|\varepsilon |^{2+\kappa }  )<\infty$ from some $\kappa \geq 0$. Then we have
\$
	| \EE \psi_\tau(\varepsilon) | \leq  \min\big\{  \tau^{-1} \sigma^2,  \tau^{-1- \kappa } \EE\big( | \varepsilon |^{2+ \kappa}\big) \big\}. \nn
\$
Moreover, if $\kappa >0$,
\$
	\sigma^2 -  2 \kappa^{-1} \tau^{- \kappa} \EE\big(|\varepsilon|^{2+ \kappa } \big)  \leq \EE \{ \psi_\tau^2(\varepsilon) \}  \leq \sigma^2. \nn
\$
\end{proposition}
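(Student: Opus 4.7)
The plan is to exploit the simple identity that $\psi_\tau(x)$ and $x$ agree on $\{|x|\le \tau\}$, so that all the relevant discrepancies can be written as integrals over the tail event $\{|\varepsilon|>\tau\}$ and then bounded with Markov-type arguments at the appropriate moment order.

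\medskip
\noindent\textbf{Bound on $|\EE\psi_\tau(\varepsilon)|$.}  First I would use $\EE(\varepsilon)=0$ to rewrite
\[
 \EE\psi_\tau(\varepsilon)=\EE\big[\psi_\tau(\varepsilon)-\varepsilon\big],
\]
and observe that the integrand vanishes on $\{|\varepsilon|\le \tau\}$ while on $\{|\varepsilon|>\tau\}$ one has $|\psi_\tau(\varepsilon)-\varepsilon|=|\varepsilon|-\tau\le |\varepsilon|$. Thus
\[
 |\EE\psi_\tau(\varepsilon)|\le \EE\bigl[|\varepsilon|\,\mathbf 1\{|\varepsilon|>\tau\}\bigr].
\]
On $\{|\varepsilon|>\tau\}$ we have $|\varepsilon|\le |\varepsilon|^{p}/\tau^{\,p-1}$ for any $p\ge 1$. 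Applying this with $p=2$ gives the bound $\tau^{-1}\sigma^2$, while $p=2+\kappa$ yields $\tau^{-1-\kappa}\EE(|\varepsilon|^{2+\kappa})$. Taking the minimum finishes the first inequality.

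\medskip
\noindent\textbf{Bounds on $\EE\psi_\tau^2(\varepsilon)$.}  The upper bound is immediate because $|\psi_\tau(x)|\le |x|$ pointwise, so $\psi_\tau^2(\varepsilon)\le \varepsilon^2$ and integrating gives $\EE\psi_\tau^2(\varepsilon)\le \sigma^2$. For the lower bound I would again split according to whether $|\varepsilon|\le \tau$ or not, to get
\[
 \sigma^2-\EE\psi_\tau^2(\varepsilon)=\EE\bigl[(\varepsilon^2-\tau^2)\,\mathbf 1\{|\varepsilon|>\tau\}\bigr].
\]
To produce the claimed factor $2\kappa^{-1}\tau^{-\kappa}\EE(|\varepsilon|^{2+\kappa})$, the cleanest route is the layer-cake representation
\[
 \EE\bigl[(\varepsilon^2-\tau^2)\mathbf 1\{|\varepsilon|>\tau\}\bigr]=\int_0^\infty \PP(\varepsilon^2-\tau^2>u,\,|\varepsilon|>\tau)\,du =\int_\tau^\infty 2s\,\PP(|\varepsilon|>s)\,ds,
\]
after the substitution $s=\sqrt{u+\tau^2}$.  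Markov's inequality with the $(2+\kappa)$-th moment yields $\PP(|\varepsilon|>s)\le s^{-2-\kappa}\EE(|\varepsilon|^{2+\kappa})$, and the integral $\int_\tau^\infty 2 s^{-1-\kappa}\,ds=2\tau^{-\kappa}/\kappa$ produces exactly the stated constant.

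\medskip
\noindent\textbf{Difficulty assessment.}  The proof is essentially a tail-integration exercise: no uniform control over a stochastic process or empirical-process machinery is needed, only that $\psi_\tau$ coincides with the identity below level $\tau$.  The only small subtlety is choosing between the direct Markov bound $\EE[\varepsilon^2\mathbf 1\{|\varepsilon|>\tau\}]\le \tau^{-\kappa}\EE(|\varepsilon|^{2+\kappa})$ and the layer-cake route above; the former is tighter for $\kappa<2$, but the layer-cake version reproduces the constant $2/\kappa$ stated in the proposition and dominates the former for $\kappa\ge 2$.  I would present the layer-cake argument so that the displayed constant matches the statement exactly.
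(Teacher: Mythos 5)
Your proof is correct and follows essentially the same route as the paper's: both bound $|\EE\psi_\tau(\varepsilon)|$ by the tail term $\EE\{(|\varepsilon|-\tau)\mathbf 1(|\varepsilon|>\tau)\}$ and apply Markov at moment orders $2$ and $2+\kappa$, and both obtain the lower bound on $\EE\psi_\tau^2(\varepsilon)$ via the identity $\sigma^2-\EE\psi_\tau^2(\varepsilon)=\EE\{(\varepsilon^2-\tau^2)\mathbf 1(|\varepsilon|>\tau)\}$ followed by the layer-cake representation $2\int_\tau^\infty s\,\PP(|\varepsilon|>s)\,ds$ and Markov's inequality. The only differences are cosmetic.
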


Proposition~\ref{Prop2}, along with Theorem~\ref{thm:A1}, shows that the adaptive Huber estimator achieves nonasymptotic robustness against heavy-tailed errors, while enjoying  high efficiency when $\tau$ diverges to $\infty$. In particular, taking $t =   \log n$, we see that under the scaling $n  \gtrsim d$, the robust estimator $\hat{\bbeta}_\tau$ with $\tau \asymp \sqrt{n/(d  +   \log n)}$ satisfies
\#
	\bigg\| \hat{\bbeta}_\tau -  {\bbeta}^* -   \frac{1}{n} \sn \psi_\tau(\varepsilon_i)   \bSigma^{-1}  \bx_i  \bigg\|_2 =    O\bigg( \frac{d+\log n}{n} \bigg) \nn
\#
with probability at least $1- O(n^{-1})$. From an asymptotic point of view, this implies that if the dimension $d$, as  a function of $n$, satisfies
\#
	d =  o(n) ~\mbox{ as } n\to \infty , \nn
\#
then for any deterministic vector $\ba \in \RR^d$, the distribution of $\langle \ba , \hat{\bbeta}_\tau - {\bbeta}^* \rangle$ is close to that of $n^{-1} \sn \psi_\tau(\varepsilon_i) \langle \ba,  \bSigma^{-1}\bx_i \rangle $. If $\varepsilon_1,\ldots, \varepsilon_n$ are independent from $\varepsilon$ with variance $\sigma^2$ and $\EE (|\varepsilon|^{2+\kappa}  )<\infty$ for some $\kappa > 0$, taking  $\tau \asymp \sqrt{n/(d+\log n)}$ in Proposition~\ref{Prop2} implies that  $n^{-1/2} \sn \psi_\tau(\varepsilon_i) \langle \ba,   \bSigma^{-1}\bx_i  \rangle$ follows  a normal distribution with mean zero and variance $ \sigma^2 \| \bSigma^{-1/2} \ba \|_2^2$ asymptotically.

 \subsection{Huber regression in high dimensions}

In the high dimensional setting where $d\gg n$ and $s= \| \bbeta^* \|_0 \ll n$, we investigate the $\ell_1$-regularized Huber estimator
\#
\hat \bbeta_{\tau, \lambda } \in \arg\min_{\bbeta \in \RR^d}  \big\{ \cL_\tau(\bbeta ) + \lambda \| \bbeta \|_1 \big\}   \label{regularized.huber}
\#
under Condition~\ref{ass:3.1}, where $\tau$ and $\lambda$ represent, respectively, the robustification and regularization parameters.

\begin{theorem} \label{hd.huber}
Assume Condition~\ref{ass:3.1} holds and that the unknown $\bbeta^*$ is sparse with $s=\| \bbeta^* \|_0$.
Then any optimal solution $\hat \bbeta_{\tau, \lambda}$ to the convex program \eqref{regularized.huber} with
\#
	\tau = \tau_0 \bigg( \frac{n}{\log d} \bigg)^{ \max\{ 1/(1+\delta) , 1/2 \}}   ~ ( \tau_0 \geq \nu_\delta )
\#
and  $\lambda$ scaling as $A_0 \sigma_{\max}   \tau_0 \{ (\log d) / n \}^{ \min \{ \delta/(1+\delta) , 1/2\} }$ satisfies the bounds
\#
	 \big\|   \hat \bbeta_{\tau, \lambda} - \bbeta^* \big\|_{\bSigma, 2} \lesssim     \kappa_l^{-1/2} A_0  \sigma_{\max} \tau_0 \, s^{1/2} \bigg( \frac{\log d}{n} \bigg)^{ \min\{ \delta/(1+\delta) , 1/2 \} }  \nn  \\
	 \mbox{ and }~~  \big\| \hat \bbeta_{\tau, \lambda} - \bbeta^*  \big\|_1 \lesssim   \kappa_l^{-1}  A_0 \sigma_{\max} \tau_0 \, s \bigg( \frac{\log d}{n} \bigg)^{ \min\{ \delta/(1+\delta) , 1/2 \} }    \label{l1huber.bounds}
\#
with probability at least $1-3d^{-1}$ as long as $n\geq C   \kappa_l^{-1} \sigma_{\max}^2 s \log d$, where $C>0$ is a constant only depending on $A_0$, $\sigma_{\max} = \max_{1\leq j\leq d} \sigma_{jj}^{1/2}$ and $\kappa_l = \lambda_{\min}(\bSigma)$.
\end{theorem}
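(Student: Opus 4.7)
The plan is to execute the familiar penalized $M$-estimator three-step scheme (score bound $\Rightarrow$ cone containment $\Rightarrow$ restricted strong convexity), with the twist that both the robustification level $\tau$ and the localized Hessian analysis must be calibrated to the finite-$(1+\delta)$-moment regime.

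First I would control the empirical score at the truth: $\nabla \cL_\tau(\bbeta^*) = -n^{-1}\sum_i \psi_\tau(\varepsilon_i)\bx_i$. Decompose each coordinate into a centered part and a bias part. Since $|\psi_\tau(\varepsilon_i)|\leq \tau$ and $x_{ij}$ is sub-Gaussian, the product $\psi_\tau(\varepsilon_i) x_{ij}$ is sub-exponential with Orlicz scale $\lesssim \tau\sigma_{\max}$ and variance $\lesssim \sigma^2_{jj}\,\EE\{\psi_\tau^2(\varepsilon)\} \leq \sigma_{\max}^2\, v_1$. A Bernstein inequality plus a union bound over the $d$ coordinates yields
\[
 \bigl\| n^{-1}\textstyle\sum_i \{\psi_\tau(\varepsilon_i) x_{ij} - \EE[\psi_\tau(\varepsilon_i) x_{ij}]\}\bigr\|_\infty \lesssim A_0\sigma_{\max}\Bigl( \nu_\delta\sqrt{\tfrac{\log d}{n}} + \tau\tfrac{\log d}{n}\Bigr).
\]
For the bias, Proposition~\ref{Prop2} gives $|\EE\{\psi_\tau(\varepsilon)\}| \leq v_\delta\tau^{-\delta}$ when $0<\delta<1$ (and $\leq v_1\tau^{-1}$ when $\delta\geq 1$); independence of $\bx_i$ and $\varepsilon_i$ then bounds each coordinate of the bias by $\sigma_{\max}\cdot v_\delta\tau^{-\delta}$. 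Plugging in $\tau=\tau_0(n/\log d)^{\max\{1/(1+\delta),1/2\}}$ balances the sub-exponential and bias terms so that both are of order $\lambda/2 \asymp A_0\sigma_{\max}\tau_0\bigl\{(\log d)/n\bigr\}^{\min\{\delta/(1+\delta),1/2\}}$, giving $\|\nabla\cL_\tau(\bbeta^*)\|_\infty\leq \lambda/2$ with probability $\geq 1-d^{-1}$. A convex-optimization/KKT argument then forces the error $\widehat\Delta:=\hat\bbeta_{\tau,\lambda}-\bbeta^*$ into the cone $\cC(\cS,3)=\{\bu:\|\bu_{\cS^c}\|_1\leq 3\|\bu_\cS\|_1\}$.

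Second, I would verify a localized restricted eigenvalue condition for the Huber Hessian $\Hb_\tau(\bbeta)=n^{-1}\sum_i\mathbf{1}\{|y_i-\langle \bx_i,\bbeta\rangle|\leq \tau\}\bx_i\bx_i^\T$ on $\cC(\cS,3)$. For $\bu\in\cC(\cS,3)$, write
\[
 \langle \bu,\Hb_\tau(\bbeta)\bu\rangle \geq \langle \bu,\Sb_n\bu\rangle - n^{-1}\textstyle\sum_i \mathbf{1}\{|\varepsilon_i|+|\langle\bx_i,\bbeta-\bbeta^*\rangle|>\tau\}\langle\bx_i,\bu\rangle^2.
\]
The first term is lower bounded via the standard sub-Gaussian RE result (e.g., Rudelson--Zhou) by $(\kappa_l/2)\|\bu\|_{\bSigma,2}^2$ as soon as $n\gtrsim s\log d$. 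For the subtraction term, within a local radius $r\asymp\kappa_l^{-1}s\lambda$ in $\ell_1$, Markov's inequality on $\mathbf{1}\{|\varepsilon_i|>\tau/2\}$ (using $\EE|\varepsilon|^{1+\delta}\leq v_\delta$) together with an envelope bound $|\langle\bx_i,\bbeta-\bbeta^*\rangle|\leq L\|\bbeta-\bbeta^*\|_1$ controls the excluded mass by $o(1)\kappa_l$; a peeling/concentration argument over the intersection of $\cC(\cS,3)$ with an $\ell_1$-ball delivers an analog of Lemma~\ref{lemma:hd:lm1} in the random-design setting.

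Third, combine score and RSC in the usual way. From the optimality of $\hat\bbeta_{\tau,\lambda}$ and convexity of $\cL_\tau$,
\[
 \cL_\tau(\hat\bbeta_{\tau,\lambda})-\cL_\tau(\bbeta^*) + \lambda\bigl(\|\hat\bbeta_{\tau,\lambda}\|_1-\|\bbeta^*\|_1\bigr)\leq 0,
\]
and Taylor expansion together with the RSC bound yields
\[
 \tfrac{\kappa_l}{4}\|\widehat\Delta\|_{\bSigma,2}^2 \leq \|\nabla\cL_\tau(\bbeta^*)\|_\infty\|\widehat\Delta\|_1 + \lambda(\|\bbeta^*\|_1-\|\hat\bbeta_{\tau,\lambda}\|_1).
\]
Using $\|\nabla\cL_\tau(\bbeta^*)\|_\infty\leq \lambda/2$ and the cone containment $\|\widehat\Delta\|_1\leq 4\|\widehat\Delta_\cS\|_1\leq 4\sqrt{s}\,\kappa_l^{-1/2}\|\widehat\Delta\|_{\bSigma,2}$ gives the announced $\ell_{\bSigma,2}$-bound of order $\kappa_l^{-1/2}\sqrt{s}\lambda$, and re-inserting via the cone inequality yields the $\ell_1$-bound of order $\kappa_l^{-1}s\lambda$. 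A final self-consistency check confirms that $\|\widehat\Delta\|_1$ stays inside the local radius $r$ used to validate the LRE.

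The hard part will be the localized RSC verification: unlike the fixed-design analog in Lemma~\ref{lemma:hd:lm1}, where the envelope $L=\max_i\|\bx_i\|_\infty$ is deterministic, the random-design version requires controlling a supremum of empirical processes indexed by $(\bu,\bbeta)\in\cC(\cS,3)\times\{\|\bbeta-\bbeta^*\|_1\leq r\}$ where the bad event $\mathbf{1}\{|\varepsilon_i|+|\langle\bx_i,\bbeta-\bbeta^*\rangle|>\tau\}$ couples design and noise randomness. Careful use of sub-Gaussian maximal inequalities (to handle $\max_i\|\bx_i\|_\infty\lesssim A_0\sigma_{\max}\sqrt{\log(nd)}$) and a small-ball/truncation argument for $\varepsilon_i$ with only $(1+\delta)$-moments is needed to ensure the discarded contribution is a small fraction of $\kappa_l$ under the stated scaling $n\gtrsim \kappa_l^{-1}\sigma_{\max}^2 s\log d$.
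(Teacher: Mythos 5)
Your skeleton (score bound $\Rightarrow$ cone $\Rightarrow$ restricted strong convexity $\Rightarrow$ basic inequality, with a localization/self-consistency check at the end) matches the paper's proof, and your handling of the score term and of the cone containment is essentially the argument in Lemmas~\ref{lm:grad} and \ref{lemma:1cone}. The genuine gap is in the step you yourself flag as hard: the restricted strong convexity under random design at the stated sample size $n \gtrsim \kappa_l^{-1}\sigma_{\max}^2\, s\log d$. Your plan controls the discarded mass $n^{-1}\sum_i 1\{|\varepsilon_i| + |\langle \bx_i, \bbeta-\bbeta^*\rangle| > \tau\}\langle \bx_i,\bu\rangle^2$ by the envelope $|\langle\bx_i,\bu\rangle| \leq \max_i\|\bx_i\|_\infty \|\bu\|_1 \lesssim \sqrt{\log(nd)}\,\sqrt{s}\,\|\bu\|_2$ on the cone, followed by Markov/Hoeffding on the indicators. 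That is exactly the fixed-design route of Lemma~\ref{lemma:hd:lm1}, and it inherently costs a factor of $s$ in the envelope squared: the fluctuation term $\sqrt{t/n}$ multiplied by $s\log(nd)$ forces $n\gtrsim s^2$ (with extra logarithms), which is strictly weaker than the theorem's $n\gtrsim s\log d$. The paper even remarks explicitly that the fixed-design scaling is $O(s^2\log d)$ while the random-design scaling is $O(s\log d)$, so the two cannot be bridged by the same envelope argument.

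The missing idea is in Lemma~\ref{RSC.prop}: the paper replaces the non-smooth indicator $1\{|\langle\bx_i,\bbeta-\bbeta^*\rangle|\leq \tau\|\bbeta-\bbeta^*\|_{\bSigma,2}/(2r)\}\,\langle\bx_i,\bbeta-\bbeta^*\rangle^2$ by a Lipschitz surrogate $\phi_R$, lower-bounds the \emph{gradient-based} curvature $\langle\nabla\cL_\tau(\bbeta)-\nabla\cL_\tau(\bbeta^*),\bbeta-\bbeta^*\rangle$ rather than the Hessian, and then controls the supremum of the resulting empirical process over $\Theta_0(r)\cap\cC$ via Bousquet's inequality, symmetrization, and a Gaussian comparison theorem. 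The comparison step reduces everything to $\EE\|n^{-1}\sum_i G_i'\bx_i\|_\infty \lesssim \sigma_{\max}\sqrt{(\log d)/n}$, so the cone enters only once through $\|\bu\|_1\leq 4\sqrt{s}\,\kappa_l^{-1/2}\|\bu\|_{\bSigma,2}$ and the sparsity cost is linear in $s$. Without this (or an equivalent small-ball/generic-chaining device), your step two does not deliver the theorem at the stated scaling. A secondary, more minor imprecision: for $0<\delta<1$ the variance proxy of $\psi_\tau(\varepsilon_i)x_{ij}$ is not $\sigma_{jj}v_1$ (which may be infinite) but $\sigma_{jj}v_\delta\tau^{1-\delta}$, which grows with $\tau$; the calibration still closes after substituting the prescribed $\tau$, but the Bernstein bound must be stated with this $\tau$-dependent variance as in Lemma~\ref{lm:grad}.
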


Provided the distribution of $\varepsilon_i$ has finite variance, i.e. $\delta=1$, Theorem~\ref{hd.huber} asserts that the $\ell_1$-regularized Huber regression with properly tuned $(\tau, \lambda)$ gives rise to statistically consistent estimators with $\ell_1$- and $\ell_2$-errors scaling as $s \sqrt{(\log d )/n}$ and $\sqrt{s ( \log d ) /n}$, respectively, under the sample size scaling $n \gtrsim s \log d$. These rates are the minimax rates enjoyed by the standard Lasso with Gaussian/sub-Gaussian errors \citep{BRT2009, W2009}.

 The results of Theorem~\ref{hd.huber} are useful complements to those in Theorem~4 under fixed designs. Taking $t=\log d$ therein, we see that the $\ell_2$-error bound in (10) almost coincides with that in \eqref{l1huber.bounds} up to constant factors. The sample size scaling under random designs is optimal and better than the scaling under fixed designs: the former is of order $O(s \log d)$, while the latter is of order $O(s^2 \log d)$. Technically, the sample size scaling is required to ensure the restricted strong convexity of Huber loss in a neighborhood of $\bbeta^*$; see Lemma~1 in the main text and Lemma~\ref{RSC.prop} below.
Since most existing works on analyzing high dimensional $M$-estimators beyond the least squares have focused on random designs (see, e.g. \cite{BC2011}, \cite{NRWY2012} and the references therein), it is not clear what the optimal sample size scaling is under fixed designs, although it is possible that the additional $s$ factor in Theorem~4 is purely an artifact of the proof technique.
We refer to \cite{vdG2008} for a study of generalized linear models in high dimensions. To achieve the oracle rate for the excess risk, the sparsity $s$ is required to be of order $O(\sqrt{n/\log n})$, or equivalently, the required sample size scales as $s^2 \log n$.

We complete this section by a prediction error bound for $\hat \bbeta_{\tau, \lambda}$, which is a direct consequence of Theorem~\ref{hd.huber}.

\begin{corollary} \label{hd.huber.corr}
Under the conditions of Theorem~\ref{hd.huber}, it holds
\# \label{prediction.error}
	 \frac{1}{\sqrt{n}}  \big\|  \Xb ( \hat \bbeta_{\tau, \lambda} - \bbeta^* )   \big\|_2  \lesssim     \kappa_l^{-1/2} A_0  \sigma_{\max} \tau_0 \, s^{1/2} \bigg( \frac{\log d}{n} \bigg)^{ \min\{ \delta/(1+\delta) , 1/2 \} }
\#
with probability at least $1-5d^{-1}$, where $\Xb = (\bx_1,\ldots, \bx_n)^\T $ is the $n\times d$ design matrix.
\end{corollary}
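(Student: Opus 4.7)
The plan is to deduce the prediction-norm bound from the $\ell_2$ ($\bSigma$-norm) and $\ell_1$ estimation rates already supplied by Theorem~\ref{hd.huber}, combined with a standard sub-Gaussian-design concentration inequality that compares the empirical quadratic form $n^{-1}\|\Xb v\|_2^2$ to its population counterpart $v^\T\bSigma v$. Set $\widehat v = \hat\bbeta_{\tau,\lambda} - \bbeta^*$ and abbreviate $r_n = \{(\log d)/n\}^{\min\{\delta/(1+\delta),1/2\}}$. Theorem~\ref{hd.huber} directly delivers, on an event $\mathcal{E}_1$ with $\PP(\mathcal{E}_1)\geq 1 - 3d^{-1}$, the simultaneous control
\[
\|\widehat v\|_{\bSigma,2} \lesssim \kappa_l^{-1/2} A_0 \sigma_{\max}\tau_0\, s^{1/2}\, r_n,
\qquad
\|\widehat v\|_1 \lesssim \kappa_l^{-1} A_0 \sigma_{\max}\tau_0\, s\, r_n.
\]

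The second ingredient I would invoke is a deviation bound of Raskutti--Wainwright--Yu type for sub-Gaussian random matrices: under Condition~\ref{ass:3.1}, there exist constants $C_1,C_2>0$ depending only on $A_0$ such that, on an event $\mathcal{E}_2$ with $\PP(\mathcal{E}_2)\geq 1-2d^{-1}$,
\[
\frac{1}{\sqrt{n}}\|\Xb v\|_2 \leq C_1 \|v\|_{\bSigma,2} + C_2\, \sigma_{\max}\sqrt{\frac{\log d}{n}}\,\|v\|_1
\qquad \text{for every } v\in\RR^d.
\]
This is the standard ``compatibility upper bound'' for isotropic sub-Gaussian rows $\bSigma^{-1/2}\bx_i$ after an affine reparametrization, and can be proved by Mendelson's generic chaining argument or by discretizing the sparse sphere via a covering-number calculation combined with Bernstein's inequality for sub-exponential quadratic forms.

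Intersecting $\mathcal{E}_1\cap\mathcal{E}_2$, whose probability is at least $1-5d^{-1}$ by a union bound, and substituting the two estimation bounds of Step~1 into the sub-Gaussian-design inequality of Step~2 gives
\[
\frac{1}{\sqrt{n}}\|\Xb\widehat v\|_2
\lesssim \kappa_l^{-1/2} A_0\sigma_{\max}\tau_0\, s^{1/2} r_n
\;+\; \kappa_l^{-1} A_0 \sigma_{\max}^2 \tau_0\, s\, r_n \sqrt{\tfrac{\log d}{n}}.
\]
The sample-size condition $n\geq C\kappa_l^{-1}\sigma_{\max}^2 s\log d$ from Theorem~\ref{hd.huber} ensures that $\sigma_{\max}\sqrt{s\log d/(n\kappa_l)}\lesssim 1$, so that the second term is dominated by the first and the claimed bound \eqref{prediction.error} follows.

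The main obstacle is not in stitching the two pieces together—that part is bookkeeping—but in supplying the sub-Gaussian-design inequality used in Step~2 with the appropriate probability. Since the paper already analyzes $\hat\bbeta_{\tau,\lambda}$ under Condition~\ref{ass:3.1}, a version of this inequality is effectively available in the supplementary material (for instance, restricted eigenvalue-type bounds on $\Sb_n = n^{-1}\Xb^\T\Xb$ valid on the $\ell_1$-cone containing $\widehat v$), so one can appeal to it directly rather than reproving it. Alternatively, because $\widehat v$ is already shown to belong to an $\ell_1$-cone $\mathcal{C}(2s,3)$ in the proof of Theorem~\ref{hd.huber}, one can replace the all-of-$\RR^d$ bound in Step~2 by its restricted-cone analogue $n^{-1}\|\Xb v\|_2^2 \leq 2\|v\|_{\bSigma,2}^2$, which needs only the scaling $n\gtrsim s\log d$ and yields the same conclusion in one line.
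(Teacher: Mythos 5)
Your proposal is correct, and your primary route differs in a small but genuine way from the paper's. The paper works entirely on the $\ell_1$-cone $\cC$ established in the proof of Theorem~\ref{hd.huber}: it sets $\bPsi = \Xb\bSigma^{-1/2}$, invokes Theorem~16 of Rudelson and Zhou (2013) to get the uniform comparison $n^{-1/2}\|\Xb v\|_2 \leq 2\|v\|_{\bSigma,2}$ for all $v$ in the cone (with probability $1-2d^{-1}$ under $n \gtrsim \kappa_l^{-1}A_0^4 \sigma_{\max}^2 s\log d$), and then substitutes only the $\bSigma$-norm rate from \eqref{rescaled.error.bound}; this is exactly the "one-line alternative" you mention in your closing paragraph. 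Your main argument instead uses a global two-term upper bound $n^{-1/2}\|\Xb v\|_2 \leq C_1\|v\|_{\bSigma,2} + C_2\sigma_{\max}\sqrt{(\log d)/n}\,\|v\|_1$ valid for every $v\in\RR^d$, which trades the cone restriction for the need to also carry the $\ell_1$ estimation rate and to absorb the cross term via the sample-size condition $n\gtrsim \kappa_l^{-1}\sigma_{\max}^2 s\log d$; both ingredients are indeed supplied by Theorem~\ref{hd.huber}, and the probability accounting ($3d^{-1}$ from the estimation event plus $2d^{-1}$ from the design event) matches the stated $1-5d^{-1}$. The one thing your route leaves as an external citation is the global two-term inequality itself, which is standard for sub-Gaussian rows but is not proved in the paper; the paper's cone-restricted version has the advantage of being directly covered by the Rudelson--Zhou result it cites, while your version is marginally more robust in that it does not require knowing in advance that $\hat\bbeta_{\tau,\lambda}-\bbeta^*$ lies in the cone.
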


\section{Proofs of Main Theorems} \label{sec:1}

Throughout the proofs, we use $\psi_\tau = \ell'_\tau$ as in definition \eqref{psi.def} and let $\| \cdot \|_{\bSigma,2}$ be the rescaled $\ell_2$-norm on $\RR^d$ given by $\| \bu \|_{\bSigma, 2} = \| \bSigma^{1/2} \bu \|_2$ for $\bu \in \RR^d$.

\subsection{Auxiliary Lemmas}

First we collect several auxiliary lemmas. Our first lemma concerns the localized analysis that can be utilized to remove the parameter constraint in previous works. It is established in \cite{fan2015tac} and we reproduce it here for completeness.

\begin{lemma}\label{lm01}
Let  $D_\cL(\bbeta_1,\bbeta_2)=\cL(\bbeta_1)-\cL(\bbeta_2)- \langle \nabla \cL(\bbeta_2),\bbeta_1-\bbeta_2  \rangle$ and $D_\cL^s(\bbeta_1,\bbeta_2)=D_\cL(\bbeta_1,\bbeta_2)+D_\cL(\bbeta_2,\bbeta_1)$. For $\bbeta_\eta  =\bbeta^*+ \eta (\bbeta-\bbeta^*)$ with $\eta \in (0,1]$ and any convex loss functions $\cL$, we have
	\$
	D_\cL^s(\bbeta_{\eta} ,\bbeta^*)\leq \eta D_\cL^s(\bbeta,\bbeta^*).
	\$
\end{lemma}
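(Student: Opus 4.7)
The plan is to rewrite the symmetric Bregman divergence in a more tractable form and then exploit the fact that, along a line segment, the directional derivative of a convex function is monotone.

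First I would observe that, by canceling the function values,
$$
D_\cL^s(\bbeta_1,\bbeta_2) = \langle \nabla \cL(\bbeta_1) - \nabla \cL(\bbeta_2),\, \bbeta_1-\bbeta_2\rangle.
$$
Applying this to the two pairs of interest, and using $\bbeta_\eta - \bbeta^* = \eta(\bbeta - \bbeta^*)$, the claim reduces to
$$
\eta \,\langle \nabla \cL(\bbeta_\eta) - \nabla \cL(\bbeta^*),\, \bbeta-\bbeta^*\rangle \;\leq\; \eta \,\langle \nabla \cL(\bbeta) - \nabla \cL(\bbeta^*),\, \bbeta-\bbeta^*\rangle,
$$
which after subtracting the common $\eta \langle \nabla \cL(\bbeta^*),\bbeta-\bbeta^*\rangle$ from both sides is equivalent to showing
$$
\langle \nabla \cL(\bbeta_\eta),\, \bbeta-\bbeta^*\rangle \;\leq\; \langle \nabla \cL(\bbeta),\, \bbeta-\bbeta^*\rangle.
$$

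Second, I would prove this last inequality by the standard one-dimensional-restriction trick. Define $g:[0,1]\to\RR$ by $g(t) = \cL\bigl(\bbeta^* + t(\bbeta-\bbeta^*)\bigr)$. Then $g$ is convex (as the composition of the convex $\cL$ with an affine map), and by the chain rule $g'(t) = \langle \nabla \cL(\bbeta^* + t(\bbeta-\bbeta^*)),\, \bbeta-\bbeta^*\rangle$. Convexity of $g$ implies that $g'$ is nondecreasing on $[0,1]$, so $g'(\eta)\leq g'(1)$ for $\eta\in(0,1]$, which is exactly the inequality above. Combining with step one gives $D_\cL^s(\bbeta_\eta,\bbeta^*)\leq \eta D_\cL^s(\bbeta,\bbeta^*)$.

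There is essentially no obstacle here: the result is a clean consequence of the identity $D_\cL^s(\bbeta_1,\bbeta_2) = \langle \nabla\cL(\bbeta_1)-\nabla\cL(\bbeta_2),\bbeta_1-\bbeta_2\rangle$ combined with monotonicity of $\nabla \cL$ along the segment from $\bbeta^*$ to $\bbeta$. The only mild subtlety is that the paper's Huber loss $\ell_\tau$ is differentiable, so one may freely use $\nabla\cL$; for a nonsmooth convex $\cL$, one would instead appeal to monotonicity of the subdifferential and pick suitable subgradients, but that generality is not needed here.
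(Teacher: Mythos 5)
Your proof is correct and is essentially the paper's argument: both reduce the claim, via the identity $D_\cL^s(\bbeta_1,\bbeta_2)=\langle\nabla\cL(\bbeta_1)-\nabla\cL(\bbeta_2),\bbeta_1-\bbeta_2\rangle$, to monotonicity of the directional derivative $\eta\mapsto\langle\nabla\cL(\bbeta_\eta),\bbeta-\bbeta^*\rangle$, which follows from convexity of the one-dimensional restriction (the paper phrases this as convexity of $Q(\eta)=D_\cL(\bbeta_\eta,\bbeta^*)$, whose derivative is your $g'(\eta)-g'(0)$). No gaps.
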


\begin{proof}[Proof of Lemma~\ref{lm01}]
Let  $Q(\eta)=D_{\cL}(\bbeta_\eta,\bbeta^*)=\cL(\bbeta_\eta)-\cL(\bbeta^*)- \langle\nabla\cL(\bbeta^*),\bbeta_\eta-\bbeta^* \rangle$. Noting that the derivative of $\cL(\bbeta_\eta)$ with respect to $\eta$ is $ \frac{d}{d \eta} \cL(\bbeta_\eta) = \langle \nabla \cL(\bbeta_\eta), \bbeta-\bbeta^*\rangle$, we have
	\$
	Q'(\eta) = \langle\nabla\cL(\bbeta_\eta)-\nabla\cL(\bbeta^*),\bbeta-\bbeta^* \rangle.
	\$
Then, the symmetric Bregman divergence $D_{\cL}^s(\bbeta_\eta-\bbeta^*)$ can be written as
\$
D_{\cL}^s(\bbeta_\eta, \bbeta^*)= \langle\nabla\cL(\bbeta_\eta)-\nabla\cL(\bbeta^*), \eta(\bbeta-\bbeta^*) \rangle=\eta Q'(\eta) , \ \ 0<\eta\leq 1.
\$
Taking $\eta =1$ in the above equation, we have
$
Q'(1)=D_{\cL}^s(\bbeta,\bbeta^*)
$
as a special case. If $Q(\eta)$ is convex, then $Q'(\eta)$ is non-decreasing and thus
\$
D_{\cL}^s(\bbeta_\eta,\bbeta^*)= \eta Q'(\eta)\leq \eta Q'(1)=\eta D_{\cL}^s(\bbeta,\bbeta^*).
\$

It remains to show the convexity of  $\eta \in [0,1] \mapsto  Q(\eta)$; or equivalently, the convexity of $\cL (\bbeta_\eta )$ and $\langle \nabla\cL(\bbeta^*), \bbeta^*-\bbeta_\eta\rangle$, respectively.
First, note that $\bbeta_\eta$, as a function of $\eta$, is linear in $\eta$, that is, $\bbeta_{\alpha_1 \eta_1+\alpha_2 \eta_2}=\alpha_1 \bbeta_{\eta_1}+\alpha_2 \bbeta_{\eta_2}$  for all $\eta_1, \eta_2\in [0,1]$ and $\alpha_1, \alpha_2\geq 0$ satisfying $\alpha_1+\alpha_2=1$. Then, the convexity of $\eta \mapsto \cL(\bbeta_\eta)$ follows from this linearity and the convexity of the Huber loss. The convexity of the second term follows directly from the bi-linearity of the inner product.
\end{proof}

The following two lemmas provide restricted strong convexity properties for the Huber loss in a local vicinity of the true parameter under both fixed and random designs.

\begin{lemma}\label{lm02.fix}
Assume that Condition~\ref{ass:3.0} holds and that $v_\delta = n^{-1} \sn \e ( |\varepsilon_i |^{1+\delta} ) <\infty$ for some $0<\delta \leq 1$. Then for any $t, r >0$, the Hessian matrix $ \nabla^2 \cL_\tau(\bbeta) $ with $\tau > 2 M r$ satisfies that, with probability greater than $1- e^{-t}$,
\begin{align}
 & \min_{\bbeta\in \RR^d: \|   \bbeta - \bbeta^*   \|_2\leq r} \lambda_{\min} \big(  \nabla^2 \cL_\tau(\bbeta)  \big)  \nn \\
 & \quad \quad \quad \quad \quad \geq   \big\{ 1 -  (2 M r/\tau)^2 \big\}  c_l -  M^2  \big\{   (2/\tau)^{1+\delta} v_\delta + (2n)^{-1/2} t^{1/2}  \big\} ,   \label{Hessian.lbd}
\end{align}
where $M = \max_{1\leq i\leq n} \| \bx_i \|_2$.
\end{lemma}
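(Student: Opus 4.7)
\medskip

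\noindent\textbf{Proof Plan for Lemma~\ref{lm02.fix}.}

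The plan is to obtain a deterministic lower bound for $\bu^\T \nabla^2 \cL_\tau(\bbeta) \bu$ that is uniform in $(\bu, \bbeta) \in \mathbb{S}^{d-1} \times B(\bbeta^*, r)$, and then apply Hoeffding's inequality to the only random object remaining, namely the empirical average of an indicator that does not depend on $\bbeta$. Since $\ell_\tau''(x) = \mathbb{1}(|x| \leq \tau)$, the Hessian takes the form
\[
    \nabla^2 \cL_\tau(\bbeta) = \frac{1}{n}\sum_{i=1}^n \mathbb{1}\big(|\varepsilon_i - \bx_i^\T (\bbeta-\bbeta^*)|\leq \tau\big)\, \bx_i \bx_i^\T .
\]
Writing $w_i = \bx_i^\T(\bbeta - \bbeta^*)$ with $|w_i| \leq Mr$, fix a unit vector $\bu$ and rewrite
\[
    \bu^\T \nabla^2 \cL_\tau(\bbeta) \bu = \bu^\T \Sb_n \bu - \frac{1}{n}\sum_{i=1}^n \mathbb{1}(|\varepsilon_i + w_i | > \tau)\,(\bu^\T \bx_i)^2.
\]

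The key algebraic step is the subadditive decomposition
$
    \mathbb{1}(|\varepsilon_i + w_i| > \tau) \leq \mathbb{1}(|\varepsilon_i| > \tau/2) + \mathbb{1}(|w_i| > \tau/2),
$
which is valid since $|\varepsilon_i + w_i|>\tau$ forces at least one of $|\varepsilon_i|$, $|w_i|$ to exceed $\tau/2$. For the second piece I will use the sharper envelope $\mathbb{1}(|w_i|>\tau/2) \leq (2w_i/\tau)^2$ (obvious case analysis), which yields
\[
    \frac{1}{n}\sum_{i=1}^n (\bu^\T \bx_i)^2 \mathbb{1}(|w_i|>\tau/2) \leq \frac{4}{\tau^2}\,\max_i w_i^2 \cdot \bu^\T \Sb_n \bu \leq (2Mr/\tau)^2\, \bu^\T \Sb_n \bu.
\]
For the first piece I simply bound $(\bu^\T\bx_i)^2 \leq M^2$. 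Combining and using $\lambda_{\min}(\Sb_n)\geq c_l$, I obtain the uniform deterministic inequality
\[
    \inf_{\bbeta \in B(\bbeta^*,r)} \lambda_{\min}\big(\nabla^2\cL_\tau(\bbeta)\big) \geq \{1 - (2Mr/\tau)^2\}\, c_l \, - \, M^2 \cdot \frac{1}{n}\sum_{i=1}^n \mathbb{1}(|\varepsilon_i| > \tau/2).
\]
Note that $\tau > 2Mr$ ensures the first term is positive, and crucially the right-hand side no longer depends on $\bbeta$ or $\bu$.

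It remains to control the empirical probability $\frac{1}{n}\sum_i Z_i$ with $Z_i := \mathbb{1}(|\varepsilon_i|>\tau/2) \in [0,1]$. By Markov's inequality, $\EE Z_i \leq (2/\tau)^{1+\delta}\, \EE|\varepsilon_i|^{1+\delta}$, hence $\EE \bar Z \leq (2/\tau)^{1+\delta} v_\delta$. Since the $Z_i$ are independent and $[0,1]$-valued, Hoeffding's inequality gives $\PP(\bar Z - \EE \bar Z \geq s) \leq e^{-2ns^2}$; choosing $s = (2n)^{-1/2} t^{1/2}$ produces the desired $e^{-t}$ tail. Substituting back delivers the claimed bound with probability at least $1-e^{-t}$.

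The only delicate point is making sure the $(2Mr/\tau)^2$ factor lands in front of $c_l$ rather than $c_u$; this is why one keeps $\bu^\T \Sb_n \bu$ intact (instead of crudely upper-bounding it by $c_u$) during the decomposition and only invokes $\lambda_{\min}(\Sb_n)\geq c_l$ at the final infimum step. The rest of the argument is routine: a two-line algebraic envelope followed by Hoeffding and Markov.
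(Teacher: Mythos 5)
Your proposal is correct and follows essentially the same route as the paper's proof: the same subadditive splitting of the indicator $\mathbb{1}(|y_i-\langle\bx_i,\bbeta\rangle|>\tau)$ into an $\varepsilon_i$-part and a $\langle\bx_i,\bbeta-\bbeta^*\rangle$-part, the same Chebyshev-type envelope $\mathbb{1}(|w_i|>\tau/2)\leq(2w_i/\tau)^2$ to keep the $(2Mr/\tau)^2$ factor attached to $\bu^\T\Sb_n\bu$, and the same Markov-plus-Hoeffding control of $n^{-1}\sum_i\mathbb{1}(|\varepsilon_i|>\tau/2)$ with $z=\sqrt{t/(2n)}$. The minor sign slip ($\varepsilon_i+w_i$ versus $\varepsilon_i-w_i$ for the residual) is immaterial since only $|w_i|$ enters the argument.
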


\begin{proof}[Proof of Lemma~\ref{lm02.fix}]
To begin with, note that
$$
	\Hb_n (\bbeta) =  \nabla^2 \cL_\tau(\bbeta) = \frac{1}{n} \sum_{i=1}^n  \bx_i  \bx_i^\T 1\big( |y_i - \bx_i^\T \bbeta | \leq \tau \big) ,
$$
where $\mathbf{S}_n$ is given in Condition~\ref{ass:3.0}. 
For each $\bbeta \in \RR^d$, define its centered and rescaled version $\bbeta_0 = \bbeta - \bbeta^*$ such that $y_i - \langle \bx_i , \bbeta \rangle = \varepsilon_i - \langle  \bx_i ,  \bbeta_0 \rangle$. Using the inequality that
$$
	1\big(  |y_i - \langle \bx_i , \bbeta \rangle | >  \tau \big) \leq 1\big( | \varepsilon_i | >  \tau/2 \big) +  1\big( |  \langle  \bx_i , \bbeta_0 \rangle |  >  \tau/2 \big),
$$
we have, for any $\bu  \in \mathbb{S}^{d-1}$ and $\bbeta \in \RR^{d}$ satisfying $\| \bbeta_0 \|_2  \leq r$,
\begin{align}
	& \langle \bu,  \Hb_n (\bbeta) \bu  \rangle  \nn \\
	& \geq  \| \Sb_n^{1/2} \bu   \|_2^2 - \frac{1}{n} \sn \langle  \bx_i , \bu \rangle^2 1 \big(  | \varepsilon_i | > \tau/2 \big)   -  \frac{1}{n} \sum_{i=1}^n \langle \bx_i , \bu \rangle^2  1 \big( | \langle  \bx_i , \bbeta_0 \rangle  | > \tau/2 \big)  \nn \\
	& \geq \|   \Sb_n^{1/2}  \bu   \|_2^2 - \max_{1\leq i\leq n} \| \bx_i \|_2^2 \, \bigg\{ \frac{1}{n} \sn  1\big( | \varepsilon_i | >  \tau/2 \big)  +  \frac{4}{\tau^2}\| \bbeta_0  \|_2^2 \,  \|  \Sb_n^{1/2} \bu   \|_2^2 \bigg\}  \nn \\
	  & \geq    c_l \big\{ 1 -  (2 M r/\tau)^2 \big\} -     \frac{M^2}{n}  \sum_{i=1}^n  1\big( | \varepsilon_i | >  \tau/2 \big) , \nn
\end{align}
provided that $\tau > 2 M r$.
For any $z\geq 0$, it follows from Hoeffding's inequality that, with probability at least $1- e^{-2n z^2}$,
\begin{align}
   \frac{1}{n} \sum_{i=1}^n   1 \big( | \varepsilon_i | >  \tau/2 \big)  \leq \frac{1}{n} \sn  \PP\big( | \varepsilon_i  | >\tau/2 \big)   +   z . \nn
\end{align}
This, together with the inequality $\PP( | \varepsilon_i  | > \tau/2 ) \leq (2/\tau)^{1+\delta} v_{i,\delta}$ and Condition~\ref{ass:3.0}, implies that, with probability at least $1- e^{ -2n z^2}$,
\begin{align}
	\big\langle \bu, \Hb_n (\bbeta)\bu \big\rangle
	  \geq   \big\{ 1 -  (2 M r/\tau)^2 \big\}    c_l -   M^2   \big\{  (2/\tau)^{1+\delta} v_\delta + z   \big\}  . \nn
\end{align}
This proves \eqref{Hessian.lbd} immediately by taking $z=\sqrt{t/(2n)}$.
\end{proof}

\medskip
\begin{lemma}   \label{lm02}
Assume $v_\delta < \infty$ for some $0<\delta \leq 1$ and $( \EE \langle \bu , \wt \bx \rangle^4 )^{1/4} \leq  A_1 \| \bu \|_2 $ for all $\bu \in \RR^d$ and some constant $A_1>0$.
Moreover, let $\tau, r>0$ satisfy
\#
 \tau \geq  2 \max\big\{  (4v_\delta)^{1/(1+\delta)}  , 4 A_1^2 r   \big\} ~~\mbox{ and }~~ n \gtrsim (\tau/r)^2(d+t). \label{RSC.scaling}
\#
Then with probability at least $1-e^{-t}$,
\#
	\big\langle \nabla \cL_\tau(\bbeta) - \nabla \cL_\tau(\bbeta^*), \bbeta - \bbeta^*  \big\rangle \geq \frac{1}{4} \big\| \bbeta - \bbeta^*  \big\|_{\bSigma, 2}^2  \label{RSC.bound}
\#
uniformly over $\bbeta \in \Theta_0(r) = \{ \bbeta \in \RR^d : \|  \bbeta -\bbeta^*  \|_{\bSigma, 2} \leq r \}$.
\end{lemma}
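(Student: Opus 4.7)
\noindent\emph{Proof proposal for Lemma~\ref{lm02}.} The plan is to express the inner product as a sample average, extract a pointwise quadratic lower bound from the piecewise-linear structure of $\psi_\tau=\ell_\tau'$, and then upgrade to a uniform lower bound by controlling two empirical remainder processes. Setting $\bu = \bSigma^{1/2}(\bbeta-\bbeta^*)$ and $\Delta_i = \langle \bx_i,\bbeta-\bbeta^*\rangle = \langle \wt\bx_i,\bu\rangle$ (so that $\|\bu\|_2 = \|\bbeta-\bbeta^*\|_{\bSigma,2}\leq r$), and using $\nabla\cL_\tau(\bbeta) = -n^{-1}\sum_i \psi_\tau(\varepsilon_i-\Delta_i)\bx_i$, I can write
\$
D(\bbeta) := \big\langle \nabla\cL_\tau(\bbeta)-\nabla\cL_\tau(\bbeta^*),\bbeta-\bbeta^*\big\rangle = \frac{1}{n}\sum_{i=1}^n \Delta_i^2 \int_0^1 1\big(|\varepsilon_i - s\Delta_i|\leq \tau\big)\,ds ,
\$
since $\psi_\tau' = 1_{[-\tau,\tau]}$ a.e. The integrand is at least $1(|\varepsilon_i|\leq \tau/2)\cdot 1(|\Delta_i|\leq \tau/2)$, so the union bound $1(A\cap B)\geq 1-1(A^c)-1(B^c)$ gives $D(\bbeta)\geq n^{-1}\sum_i \Delta_i^2 - T_1(\bu) - T_2(\bu)$, where
\$
T_1(\bu) = \frac{1}{n}\sum_i \Delta_i^2\, 1(|\varepsilon_i|>\tau/2), \qquad T_2(\bu) = \frac{1}{n}\sum_i \Delta_i^2\, 1(|\Delta_i|>\tau/2).
\$

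Next I would match each of these three averages with its expectation. Because $\EE\wt\bx\wt\bx^\T=\Ib_d$, we have $\EE[n^{-1}\sum_i \Delta_i^2]=\|\bu\|_2^2$. By independence of $\varepsilon_i$ from $\bx_i$ and Markov's inequality, $\EE T_1(\bu)\leq (2/\tau)^{1+\delta} v_\delta\, \|\bu\|_2^2\leq \|\bu\|_2^2/4$ once $\tau\geq 2(4v_\delta)^{1/(1+\delta)}$. For $T_2$ I use $\Delta_i^2 1(|\Delta_i|>\tau/2)\leq (2/\tau)^2\Delta_i^4$ and the assumed fourth-moment bound $\EE\Delta_1^4\leq A_1^4\|\bu\|_2^4\leq A_1^4 r^2 \|\bu\|_2^2$ to obtain $\EE T_2(\bu)\leq (4A_1^4 r^2/\tau^2)\|\bu\|_2^2\leq \|\bu\|_2^2/16$ when $\tau\geq 8A_1^2 r$. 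At the population level these already yield $D(\bbeta)\geq (1-1/4-1/16)\|\bu\|_2^2>\|\bu\|_2^2/2$.

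The remaining step, and the main obstacle, is to upgrade these three population bounds into uniform concentration statements over $\{\bu:\|\bu\|_2\leq r\}$. For the leading quadratic form $\bu\mapsto n^{-1}\sum_i\langle \wt\bx_i,\bu\rangle^2$, this is standard matrix concentration under a bounded fourth moment of one-dimensional projections (truncation plus Bernstein/symmetrization), yielding a deviation of order $\|\bu\|_2^2 A_1^2 \sqrt{(d+t)/n}$. The process $T_1(\bu)$ has the same structure with the independent, bounded weights $1(|\varepsilon_i|>\tau/2)$ and is handled identically. The process $T_2(\bu)$ is the genuinely new ingredient, since the indicator $1(|\Delta_i|>\tau/2)$ depends on $\bu$; I would dominate it by the fourth-order process $(2/\tau)^2 \sup_{\|\bu\|_2\leq r} n^{-1}\sum_i \langle \wt\bx_i,\bu\rangle^4$ and control that supremum through truncation of $\langle\wt\bx_i,\bu\rangle$ at a level of order $r$, followed by a covering argument on the $d$-dimensional sphere. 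The factor $(\tau/r)^2$ in the required scaling $n\gtrsim (\tau/r)^2(d+t)$ (which, using $\tau\geq 8A_1^2 r$, degenerates to the usual $n\gtrsim A_1^4(d+t)$) arises precisely from the variance proxy $r^2/\tau^2$ that multiplies the $d$-dimensional complexity in this fourth-order control.

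With each of the three deviations bounded by, say, $\|\bu\|_2^2/16$ under the stated scaling, combining the displays gives $D(\bbeta)\geq \|\bu\|_2^2(1-1/4-1/16-3/16) \geq \|\bu\|_2^2/4 = \|\bbeta-\bbeta^*\|_{\bSigma,2}^2/4$ uniformly over $\Theta_0(r)$ on an event of probability at least $1-e^{-t}$, which is exactly \eqref{RSC.bound}.
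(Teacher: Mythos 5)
Your pointwise reduction is sound and runs parallel to the paper's: the integral representation of the gradient difference, the lower bound by the product of the two indicator events, and the three population-level bounds (identity covariance, Markov for the $\varepsilon$-tail, the fourth-moment bound for the $\Delta_i$-tail) are all essentially the computations the paper performs for $\EE g(\bbeta)$. The gap is in the uniformization step, which you correctly identify as the main obstacle but then resolve in a way that cannot work under the stated assumptions. By applying $1(A\cap B)\geq 1-1(A^{c})-1(B^{c})$ at the \emph{empirical} level, you commit yourself to uniform \emph{upper} bounds on $T_1(\bu)=n^{-1}\sum_i\langle\wt\bx_i,\bu\rangle^2\,1(|\varepsilon_i|>\tau/2)$ and on $\sup_{\|\bv\|_2=1}n^{-1}\sum_i\langle\wt\bx_i,\bv\rangle^4$. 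These are suprema of empirical averages of nonnegative summands that, under the sole hypothesis $\EE\langle\bv,\wt\bx\rangle^4\leq A_1^4\|\bv\|_2^4$, possess only one or two finite moments, so they do not concentrate from above at an exponential rate in $t$. For instance, taking $\bv=\wt\bx_1/\|\wt\bx_1\|_2$ gives $\sup_{\|\bv\|_2=1}n^{-1}\sum_i\langle\wt\bx_i,\bv\rangle^4\geq n^{-1}\|\wt\bx_1\|_2^4$, and $\|\wt\bx\|_2^4$ has only polynomial tails here; the same obstruction applies to the operator norm of $n^{-1}\sum_i 1(|\varepsilon_i|>\tau/2)\wt\bx_i\wt\bx_i^{\T}$ needed for $T_1$. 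Truncation helps only for the leading term $n^{-1}\sum_i\langle\wt\bx_i,\bu\rangle^2$, which enters the lower bound with a favorable sign; for $T_1$ and $T_2$, which enter with a minus sign, throwing away large values is not free.

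The paper's proof is structured precisely to avoid this. It never separates the indicators from the square: it works with the single nonnegative functional $\phi_R(\langle\bx_i,\bbeta-\bbeta^*\rangle)\varphi_{\tau/2}(\varepsilon_i)$, where the truncation level $R=\tau\|\bbeta-\bbeta^*\|_{\bSigma,2}/(2r)$ is $\bbeta$-dependent, so that after dividing by $\|\bbeta-\bbeta^*\|_{\bSigma,2}^2$ every summand is uniformly bounded by $(\tau/4r)^2$; the indicator is replaced by the Lipschitz surrogate $\phi_R$ so that symmetrization plus a Gaussian comparison reduces the supremum to a linear process of complexity $(\tau/r)\sqrt{d/n}$, and Bousquet's inequality supplies the $e^{-t}$ tail — this is exactly where the scaling $n\gtrsim(\tau/r)^2(d+t)$ originates. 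The subtraction of the two bad events is carried out only in expectation, where Markov's inequality suffices. To repair your argument you would need to restructure it along these lines (a fixed truncation level $\tau/2$ also fails, since the normalized summand $\langle\wt\bx_i,\bu\rangle^2 1(|\langle\wt\bx_i,\bu\rangle|\leq\tau/2)/\|\bu\|_2^2$ is unbounded as $\|\bu\|_2\to 0$); as written, the concentration claims for $T_1$ and $T_2$ do not follow from the assumptions of the lemma.
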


\begin{proof}[Proof of Lemma~\ref{lm02}]
To begin with, note that
\begin{align}
	\cT(\bbeta) & := \langle \nabla \cL_\tau(\bbeta) - \nabla \cL_\tau(\bbeta^*) ,  \bbeta - \bbeta^* \rangle \nn \\
	&   = \frac{1}{n} \sn  \{  \psi_\tau(y_i - \langle \bx_i, \bbeta^* \rangle ) - \psi_\tau(y_i - \langle \bx_i , \bbeta \rangle )  \}
	\langle \bx_i , \bbeta-\bbeta^* \rangle \nn \\
	& \geq   \frac{1}{n} \sn  \{   \psi_\tau( \varepsilon_i ) - \psi_\tau(y_i - \langle \bx_i , \bbeta \rangle ) \}
	\langle \bx_i , \bbeta-\bbeta^* \rangle  1\{ \cE_i \}, \label{T.def}
\end{align}
where $1\{ \cE_i \}$ denotes the indication function of the event
$$
	\cE_i = \big\{ | \varepsilon_i | \leq \tau/2 \big\} \cap  \big\{ | \langle \bx_i, \bbeta -\bbeta^* \rangle | \leq \tau \|   \bbeta -\bbeta^*  \|_{\bSigma, 2} / (2r) \big\} .
$$
On $\cE_i$, it holds $|y_i - \langle \bx_i, \bbeta \rangle | \leq |\varepsilon_i | + | \langle \bx_i, \bbeta -\bbeta^* \rangle | \leq \tau/2 + \tau/2 = \tau $ for all $\bbeta \in  \Theta_0(r)$. Since $\psi_\tau'(x) = 1$ for $|x| \leq \tau$, the right-hand of \eqref{T.def} can be bounded from below by
\begin{align}
 \frac{1}{n}  \sn \langle \bx_i , \bbeta -\bbeta^* \rangle^2 1  \big\{ | \langle \bx_i, \bbeta -\bbeta^* \rangle | \leq \tau \|   \bbeta -\bbeta^*  \|_{\bSigma, 2} / (2r) \big\} 1 \big\{ |\varepsilon_i | \leq \tau/2 \big\}. \label{T.lbd1}
\end{align}

To bound the right-hand of \eqref{T.lbd1}, the main difficulty is that the indicator function is non-smooth. To deal with this issue, we define the following ``smoothed" functions: for any $R>0$, write
$$
	\phi_R(x) = \begin{cases}
	 x^2   & \mbox{ if } |x| \leq R/2 , \\
    (x-R)^2 & \mbox{ if } R/2 < x \leq R , \\
    (x+R)^2 & \mbox{ if } -R \leq x \leq -R/2, \\
    0 & \mbox{ if } |x| > R,
\end{cases}    ~\mbox{ and }~   \varphi_R(y) = 1( |y| \leq R ).
$$
It is easy to see that the function $\phi_R$ is $R$-Lipschitz and satisfies
\#
	 x^2 1(|x| \leq R/2) \leq \phi_R(x) \leq x^2 1(|x| \leq R) . \label{phi.bound}
\#
Together, \eqref{T.def}, \eqref{T.lbd1}  and \eqref{phi.bound} imply
\#
	 \cT(\bbeta) \geq g(\bbeta ) : = \frac{1}{n } \sn \phi_{ \tau \|   \bbeta -\bbeta^*  \|_{\bSigma, 2} / (2r)} (\langle \bx_i, \bbeta - \bbeta^* \rangle) \varphi_{\tau/2}(\varepsilon_i).\label{def.g}
\#
For $r>0$, define  $\Delta(r) = \sup_{\bbeta \in \Theta_0(r)} |g(\bbeta) - \EE g(\bbeta)|/  \| \bbeta - \bbeta^* \|_{\bSigma, 2}^2$, such that
\#
	 \frac{\cT(\bbeta)}{\| \bbeta - \bbeta^* \|_{\bSigma, 2}^2} \geq \frac{\EE g(\bbeta)}{\| \bbeta - \bbeta^* \|_{\bSigma, 2}^2} - \Delta(r)
\#
for all $\bbeta \in \Theta_0(r)$. In the following, we establish lower and upper bounds for $\EE g(\bbeta)$ and $\Delta(r)$, respectively, starting with the former.

For $\bbeta \in \RR^d$, write $\bdelta = \bbeta - \bbeta^*$. By \eqref{def.g} and Markov's inequality,
\#
	\EE g(\bbeta)  & \geq \frac{1}{n} \sn  \EE \langle \bx_i, \bdelta \rangle^2 - \frac{1}{n} \sn \EE \langle \bx_i , \bdelta \rangle^2 1\big\{   |\langle \bx_i, \bdelta \rangle | \geq \tau \| \bdelta \|_{\bSigma,2}/(4r) \big\} \nn \\
& \quad~ -  \frac{1}{n} \sn \EE \langle \bx_i, \bdelta \rangle^2 1(|\varepsilon_i | > \tau/2) \nn \\
& \geq \bdelta^\T \bSigma \bdelta  -  v_\delta (2/\tau)^{1+\delta} \bdelta^\T \bSigma \bdelta  -(4r/\tau)^2 \| \bdelta \|_{\bSigma, 2}^{-2} \frac{1}{n} \sn \EE \langle \bx_i, \bdelta \rangle^4   \nn \\
& \geq    \| \bdelta \|_{\bSigma, 2}^2 \big\{ 1   -  v_\delta (2/\tau)^{1+\delta}   -    (4A_1^2 r/\tau)^2  \big\} . \nn
\#
Provided $\tau\geq 2\max\{   (4v_\delta )^{1/(1+\delta)}, 4A_1^2 r \}$,
\#
	 \EE g(\bbeta)  \geq \frac{1}{2}  \| \bbeta - \bbeta^* \|_{\bSigma, 2}^2 ~\mbox{ for all } \bbeta \in \RR^d. \label{Eg.lbd}
\#

Next we bound the supremum $\Delta(r)$. Write $g(\bbeta) = n^{-1} \sn g_i(\bbeta)$. Noting that $0\leq \phi_R(x) \leq R^2/4$ and $0\leq \varphi(y) \leq 1$, we have
$$
		0 \leq  g_i(\bbeta) \leq  (\tau/4r)^2 \| \bbeta -\bbeta^* \|_{\bSigma,2}^2.
$$
By Theorem~7.3 in \cite{B2003}, for any $x>0$, $\Delta(r)$ satisfies the bound
\# \label{Delta.concentration}
	 \Delta(r) \leq  \EE  \Delta(r) + \{ \EE \Delta(r) \}^{1/2}  (\tau /2r) \sqrt{\frac{x}{n}} + \sigma_n \sqrt{\frac{2x}{n}} + (\tau / 4r)^2 \frac{x}{3n}
\#
with probability at least $1-e^{-x}$, where by \eqref{phi.bound},
$$
	 \sigma_n^2  := \frac{1}{n} \sn \sup_{\bbeta \in \Theta_0(r)}   \frac{\EE g^2_i(\bbeta)}{\| \bbeta - \bbeta^* \|_{\bSigma, 2}^4} \leq  A_1^4.
$$
For the expected value $\EE \Delta(r)$, using the symmetrization inequality and the connection between Gaussian complexity and Rademacher complexity, we obtain that $\EE \Delta(r) \leq \sqrt{2 \pi} \, \EE \{ \sup_{ \bbeta \in \Theta_0(r)}  | \mathbb{G}_{\bbeta } | \} $, where
$$
	\mathbb{G}_{\bbeta} = \frac{1}{n} \sn \frac{G_i}{\| \bbeta - \bbeta^* \|_{\bSigma, 2}^2} \phi_{\tau \| \bbeta - \bbeta^* \|_{\bSigma, 2}/(2r)} (\langle\bx_i, \bbeta -\bbeta^* \rangle ) \varphi_{\tau/2}(\varepsilon_i)
$$
and $G_i$ are i.i.d. standard normal random variables that are independent of $\{(y_i, \bx_i)\}_{i=1}^n$.
Let $\EE^*$ be the conditional expectation given $\{(y_i, \bx_i)\}_{i=1}^n$. Since $\{ \mathbb{G}_{\bbeta} : \bbeta \in \Theta_0(r) \}$ is a conditional Gaussian process, for any $\bbeta_0\in \Theta_0(r)$ we have
\#
	 \EE^*\bigg\{ \sup_{\bbeta \in \Theta_0(r)} | \mathbb{G}_{\bbeta} | \bigg\} \leq \EE^* |\mathbb{G}_{\bbeta_0}| + 2 \EE^* \bigg\{ \sup_{\bbeta\in \Theta_0(r)} \mathbb{G}_{\bbeta} \bigg\} . \label{gaussian.sup.bound}
\#
Further, taking the expectation with respect to $\{(y_i, \bx_i)\}_{i=1}^n$ on both sides, \eqref{gaussian.sup.bound} remains valid with $\EE^*$ replaced by $\EE$. We write $\bbeta^*$ as $(\beta^*_1, \wt \bbeta^{* \T})^\T$ with $\beta^*_1$ denoting the first coordinate of $\bbeta^*$ and $\wt \bbeta^* \in \RR^{d-1}$.
Recalling $\phi_R(u) \leq \min (u^2 , R^2/4)$, we take $\bbeta_0 = ( \beta^*_1 + ( \EE x_1^2)^{-1/2}  r  , \wt \bbeta^{* \T})^\T$ so that $\| \bbeta_0 - \bbeta^* \|_{\bSigma, 2} =r $ and
$\EE | \mathbb{G}_{\bbeta_0} | \leq (  \EE \mathbb{G}_{\bbeta_0}^2 )^{1/2}  \leq  (4r)^{-1}\tau  n^{-1/2}$.
To bound the conditional expectation $\EE^* \{ \sup_{\bbeta\in \Theta_0(r)} \mathbb{G}_{\bbeta} \}$ in \eqref{gaussian.sup.bound}, we employ the Gaussian comparison theorem  as in the proof of Lemma~11 in \cite{LW2015}

Denote by $\var^*$ the conditional variance given $\{(y_i, \bx_i)\}_{i=1}^n$. For $\bbeta, \bbeta' \in \Theta_0(r)$, write $\bdelta = \bbeta -\bbeta^*$ and $\bdelta' = \bbeta'-\bbeta^*$. By conditional normality, we quickly compute and bound the variance of $\mathbb{G}_{\bbeta} -\mathbb{G}_{\bbeta'}$:
$$
	\var^*(\mathbb{G}_{\bbeta} -\mathbb{G}_{\bbeta'}) \leq \frac{1}{n^2} \sn \varphi_{\tau/2}^2(\varepsilon_i)
\Bigg\{   \frac{ \phi_{\tau \| \bdelta \|_{\bSigma,2}/(2r)}(\langle\bx_i, \bdelta \rangle) } {\| \bdelta \|_{\bSigma, 2}^2}  -  \frac{\phi_{\tau \| \bdelta' \|_{\bSigma,2}/(2r)}(\langle\bx_i, \bdelta' \rangle) } {\| \bdelta' \|_{\bSigma, 2}^2} \Bigg\}^2.
$$
Using the property $\phi_{cR}(cx) = c^2 \phi_{R}(x)$ for any $c>0$, we find that
$$
	\phi_{ \tau \| \bdelta' \|_{\bSigma,2}/(2r) } (\langle \bx_i ,\bdelta' \rangle ) = \frac{\| \bdelta' \|_{\bSigma, 2}^2}{ \| \bdelta \|_{\bSigma, 2}^2 } \phi_{\tau \| \bdelta \|_{\bSigma, 2}/ (2r)} \bigg( \frac{\| \bdelta' \|_{\bSigma, 2} }{ \| \bdelta \|_{\bSigma, 2} } \langle \bx_i, \bdelta \rangle  \bigg).
$$
It follows from the above calculations and the Lipschitz property of $\phi_R$ that
\#
	& 	\var^*(\mathbb{G}_{\bbeta} -\mathbb{G}_{\bbeta'})  \nn \\
& \leq \frac{1}{n^2} \sn \frac{1}{\| \bdelta  \|_{\bSigma, 2}^4} \bigg\{   \phi_{\tau \| \bdelta \|_{\bSigma,2}/(2r)}(\langle\bx_i, \bdelta \rangle) -
\phi_{\tau \| \bdelta \|_{\bSigma, 2}/ (2r)} \bigg( \frac{\| \bdelta' \|_{\bSigma, 2} }{ \| \bdelta \|_{\bSigma, 2} } \langle \bx_i, \bdelta \rangle  \bigg) \bigg\}^2 \nn \\
& \leq \frac{1}{n^2} \sn \frac{\tau^2}{4r^2 }  \bigg(  \frac{\langle\bx_i, \bdelta \rangle }{ \| \bdelta  \|_{\bSigma, 2}} - \frac{\langle  \bx_i, \bdelta' \rangle }{ \| \bdelta'  \|_{\bSigma, 2}} \bigg)^2 . \label{diff.var.bound}
\#
Let $G_1',\ldots, G_n'$ be i.i.d. standard normal random variables that are independent of all the previous variables, and define a new process
$$
	\ZZ_{\bbeta} = \frac{\tau}{2r n} \sn G_i' \frac{\langle  \bx_i, \bbeta - \bbeta^* \rangle }{ \| \bbeta -\bbeta^* \|_{\bSigma, 2}} .
$$
As an immediate consequence of \eqref{diff.var.bound}, we have $\var^*(\mathbb{G}_{\bbeta} -\mathbb{G}_{\bbeta'})  \leq \var^*(\mathbb{Z}_{\bbeta} -\mathbb{Z}_{\bbeta'}) $. Therefore, by the Gaussian comparison inequality \citep{LT1991},
$$
	\EE^* \bigg\{ \sup_{\bbeta\in \Theta_0(r)} \mathbb{G}_{\bbeta} \bigg\} \leq 2 \EE^* \bigg\{ \sup_{\bbeta\in \Theta_0(r)} \mathbb{Z}_{\bbeta} \bigg\} \leq \frac{\tau}{r}  \EE^* \bigg\|  \frac{1}{n} \sn G_i' \wt \bx_i \bigg\|_2 ,
$$
where $\wt \bx_i = \bSigma^{-1/2} \bx_i$.
Taking the expectation with respect to $\{(y_i, \bx_i)\}_{i=1}^n$ on both sides gives $\EE \{ \sup_{\bbeta\in \Theta_0(r)} \mathbb{G}_{\bbeta} \} \leq (\tau / r) \EE \| n^{-1} \sn G_i' \wt \bx_i \|_2 \leq (\tau/r)\sqrt{d/n}$. From this and the unconditional version of \eqref{gaussian.sup.bound}, we obtain
\#
	 \EE \Delta(r) \leq \sqrt{2\pi}  \bigg(  \frac{2\tau}{r} \sqrt{\frac{d}{n}} + \frac{\tau}{4r\sqrt{n}} \bigg) .\label{mean.Delta.ubd}
\#
Together, \eqref{Delta.concentration} with $x=t$  and \eqref{mean.Delta.ubd} imply that as long as $n\gtrsim (\tau/r)^2(d+t)$, $\Delta(r) \leq 1/4$ with probability at least $1- e^{-t}$. Combining this with \eqref{T.lbd1} and \eqref{Eg.lbd} proves the stated result.
\end{proof}

Recall that $\Theta_0(r)= \{ \bbeta \in \RR^d : \| \bbeta - \bbeta^* \|_{\bSigma, 2} \leq r\}$. Let $ \cC =  \{ \bbeta  \in \RR^d  :  \| (\bbeta - \bbeta^*)_{\cS^{{\rm c}}} \|_1 \leq 3\| (\bbeta - \bbeta^*)_{\cS} \|_1 \}$ be an $\ell_1$-cone in $\RR^d$, where $\cS \subseteq \{ 1,\ldots, d\}$ denotes the support of $\bbeta^*$.
As a counterpart of Lemma~\ref{lm02} in high dimensions, Lemma~\ref{RSC.prop} below shows that the adaptive Huber loss satisfies the restricted strong convexity condition over $\Theta_0(r)  \cap \cC$ with high probability.

\begin{lemma} \label{RSC.prop}
Assume $v_\delta < \infty$ for some $0<\delta \leq 1$ and $( \EE \langle \bu , \wt \bx \rangle^4 )^{1/4} \leq  A_1 \| \bu \|_2 $ for all $\bu \in \RR^d$ and some constant $A_1>0$.
Let $(n,d,\tau, r)$ satisfy
\#   \label{sample.size.scaling.1}
  \tau \geq  2 \max\big\{  (4v_\delta)^{1/(1+\delta)}  , 4 A_1^2 r   \big\}  ~\mbox{ and }~ n\gtrsim   \kappa_l^{-1}    (A_0 \tau/r)^2 \max_{1\leq j\leq d} \sigma_{jj}  \,  s \log d ,
\#
Then with probability at least $1-d^{-1}$,
\# \label{RSC.bound0}
\big\langle \nabla \cL_\tau(\bbeta) - \nabla \cL_\tau(\bbeta^*) , \bbeta  - \bbeta^* \big\rangle  \geq \frac{1}{4} \big\|    \bbeta  - \bbeta^*  \big\|_{\bSigma, 2}^2\#
{uniformly over} $\bbeta  \in  \Theta_0(r) \cap \cC$.
\end{lemma}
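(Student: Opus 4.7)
\textbf{Proof plan for Lemma~\ref{RSC.prop}.}
The plan is to follow the same two-step architecture as Lemma~\ref{lm02}, replacing the dimension $d$ by the effective dimension $s\log d$ that is available once one restricts to the $\ell_1$-cone $\cC$. Writing $\bdelta=\bbeta-\bbeta^*$ and using the truncation trick from the proof of Lemma~\ref{lm02}, I would first lower bound
$$
\cT(\bbeta):=\big\langle\nabla\cL_\tau(\bbeta)-\nabla\cL_\tau(\bbeta^*),\bdelta\big\rangle
\ \geq\ g(\bbeta):=\frac{1}{n}\sum_{i=1}^n \phi_{\tau\|\bdelta\|_{\bSigma,2}/(2r)}(\langle\bx_i,\bdelta\rangle)\,\varphi_{\tau/2}(\varepsilon_i),
$$
with the same smoothed truncations $\phi_R$ and $\varphi_R$. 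Once this bound is in place, it suffices to show $\EE g(\bbeta)\geq \tfrac{1}{2}\|\bdelta\|_{\bSigma,2}^2$ deterministically, and to control the supremum $\Delta(r,\cC):=\sup_{\bbeta\in\Theta_0(r)\cap\cC}\,|g(\bbeta)-\EE g(\bbeta)|/\|\bdelta\|_{\bSigma,2}^2$ by $1/4$ with probability at least $1-d^{-1}$.

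The expectation step is identical to its counterpart in Lemma~\ref{lm02}: Markov's inequality combined with the fourth moment bound on $\langle\bx,\bu\rangle$ and $\PP(|\varepsilon_i|>\tau/2)\leq(2/\tau)^{1+\delta}v_{i,\delta}$ yields
$$
\EE g(\bbeta)\ \geq\ \|\bdelta\|_{\bSigma,2}^2\Big\{1-v_\delta(2/\tau)^{1+\delta}-(4A_1^2 r/\tau)^2\Big\}\ \geq\ \tfrac{1}{2}\|\bdelta\|_{\bSigma,2}^2,
$$
as soon as $\tau\geq 2\max\{(4v_\delta)^{1/(1+\delta)},4A_1^2 r\}$, which is the first half of \eqref{sample.size.scaling.1}.

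The substantive change lies in the stochastic step, and this is where I expect the main work. Applying Talagrand's inequality exactly as in \eqref{Delta.concentration} (with the same envelope $(\tau/4r)^2$ and variance $\sigma_n^2\leq A_1^4$), it reduces matters to bounding $\EE\Delta(r,\cC)$. The symmetrization-plus-Gaussian-comparison argument of Lemma~\ref{lm02} then gives
$$
\EE\Delta(r,\cC)\ \lesssim\ \frac{\tau}{r}\,\EE\sup_{\bbeta\in\Theta_0(r)\cap\cC}\bigg|\bigg\langle\frac{\bdelta}{\|\bdelta\|_{\bSigma,2}},\,\frac{1}{n}\sum_{i=1}^n G_i'\bx_i\bigg\rangle\bigg|+\frac{\tau}{r\sqrt n}.
$$
Here the cone constraint enters decisively: for $\bdelta\in\cC$ with support extended to $\cS$, $\|\bdelta\|_1\leq 4\|\bdelta_\cS\|_1\leq 4\sqrt{s}\,\|\bdelta\|_2\leq 4\sqrt{s/\kappa_l}\,\|\bdelta\|_{\bSigma,2}$, so by $\ell_1/\ell_\infty$ duality the supremum is at most $4\sqrt{s/\kappa_l}\,\|n^{-1}\sum_i G_i'\bx_i\|_\infty$. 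Since the $\bx_i$ are sub-Gaussian (Condition~\ref{ass:3.1}), a standard maximal inequality gives $\EE\|n^{-1}\sum_i G_i'\bx_i\|_\infty\lesssim A_0\sqrt{\max_j\sigma_{jj}(\log d)/n}$. Combining these pieces and invoking Talagrand's bound with $x=\log d$, the scaling $n\gtrsim\kappa_l^{-1}(A_0\tau/r)^2\max_j\sigma_{jj}\cdot s\log d$ in \eqref{sample.size.scaling.1} forces $\Delta(r,\cC)\leq 1/4$ on an event of probability at least $1-d^{-1}$, and \eqref{RSC.bound0} follows at once.

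\textbf{Main obstacle.} The nontrivial part is the Gaussian comparison step after conditioning, which has to be adapted so that the dominating process is the one whose supremum naturally converts $\ell_2$ constraints into $\ell_1/\ell_\infty$ duality. One must verify carefully that the conditional variance calculation \eqref{diff.var.bound} still dominates the original increments when the sup is taken over $\Theta_0(r)\cap\cC$ rather than $\Theta_0(r)$; this is true because the upper bound \eqref{diff.var.bound} only uses the Lipschitz property of $\phi_R$ and is insensitive to the feasible set. The envelope and variance factors feeding into Talagrand's inequality are likewise dimension-free, so the only place the dimension appears is in the maximum of the sub-Gaussian entries, producing the $\log d$ (rather than $d$) factor that is essential to match the scaling \eqref{sample.size.scaling.1}.
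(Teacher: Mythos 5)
Your proposal is correct and follows essentially the same route as the paper's proof: the same reduction to bounding $\EE\Delta(r)$ over $\Theta_0(r)\cap\cC$ via Talagrand's inequality, symmetrization, and Gaussian comparison, followed by the cone-based $\ell_1/\ell_\infty$ duality bound $\sup\ZZ_{\bbeta}\lesssim \kappa_l^{-1/2}\sqrt{s}\,(\tau/r)\|n^{-1}\sum_i G_i'\bx_i\|_\infty$ and a sub-exponential maximal inequality producing the $\log d$ factor. Your observation that the conditional variance bound is insensitive to the feasible set is exactly why the paper can simply "keep all other statements the same" from Lemma~\ref{lm02}.
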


\begin{proof}[Proof of Lemma~\ref{RSC.prop}]

The proof is based on an argument similar to that in the proof of Lemma~\ref{lm02}. With slight abuse of notation, we keep using $\Delta(r)$ as the supremum of a random process:
\#
	\Delta(r)  =   \sup_{ \bbeta  \in \Theta_0(r) \cap \cC }  \frac{ | g(\bbeta   ) - \EE  g(\bbeta  ) | }{\| \bbeta -\bbeta^* \|_{\bSigma ,2}^2 }. \nn
\#
Provided $\tau \geq 2 \max\{  (4 v_\delta)^{1/(1+\delta)} , 4 A_1^2 r\}$, it can be shown that
\#
	\frac{\cT(\bbeta )}{\| \bbeta  -  \bbeta^* \|_{\bSigma ,2}^2 }  \geq \frac{1}{2}   - \Delta(r) ~\mbox{ for all }~   \bbeta  \in \Theta_0(r) \cap \cC . \label{Lower.bound.1'}
\#
According to \eqref{Delta.concentration}, it remains to bound $\EE \Delta(r)$. Following the proof of Lemma~\ref{lm02}, it suffices to focus on the (conditional) Gaussian process
\#
	\ZZ_{\bbeta  }  =  \frac{\tau }{2r n}  \sn G_i' \frac{ \langle \bx_i, \bbeta  - \bbeta^* \rangle }{\| \bbeta  - \bbeta^*  \|_{\bSigma,2} } ,  \ \ \bbeta \in \Theta_0(r) \cap \cC , \nn
\#
where $G'_i $ are i.i.d. standard normal random variables that are independent of all other random variables.
For every $\bbeta \in  \Theta_0(r) \cap \cC$, it is easy to see that
$$
	 \| \bbeta - \bbeta^* \|_1  \leq 4\sqrt{s} \, \| \bbeta - \bbeta^* \|_2 \leq 4 \kappa_l^{-1/2} \sqrt{s} \, \| \bbeta - \bbeta^* \|_{\bSigma , 2} ,
$$
implying
$$
  \sup_{ \bbeta  \in  \Theta_0(r) \cap \cC  } \ZZ_{\bbeta } \leq   2 \kappa_l^{-1/2} \sqrt{s} \, \frac{\tau}{r}   \bigg\| \frac{1}{n} \sn G'_i   \bx_i \bigg\|_{\infty} .
$$
Keep all other statements the same, we obtain
\#
	\EE  \Delta(r)  \leq \sqrt{2\pi} \Bigg(   8  \kappa_l^{-1/2} \sqrt{s} \,  \frac{  \tau}{r}  \EE \bigg\| \frac{1}{n} \sn G_i'  \bx_i \bigg\|_{\infty} + \frac{ \tau }{4r \sqrt{n}} \Bigg) . \nn
\#
With $\bx_i = (x_{i1} , \ldots, x_{id})^\T \in \RR^d$, note that
$$
	\bigg\| \frac{1}{n} \sn  G_i'  \bx_i \bigg\|_{\infty} = \max_{1\leq j\leq  d } \bigg| \frac{1}{n} \sn G'_i   x_{ij} \bigg|   .
$$
Since $G'_i   x_{ij}$ are sub-exponential/sub-gamma random variables, from Corollary~2.6 in \cite{BLM2013} we find that
$$
  \EE \bigg\| \frac{1}{n} \sn G_i'  \bx_i \bigg\|_{\infty} \lesssim  A_0 \max_{1\leq j\leq d}  \sigma_{jj}^{1/2}   \bigg(  \sqrt{\frac{\log d}{n}} + \frac{\log d}{n} \bigg) .
$$
Substituting this  into \eqref{Delta.concentration} and taking $x=\log d$, we obtain that with probability at least $1-d^{-1}$,
\#
	\frac{\cT(\bbeta )}{\| \bbeta  -\bbeta^* \|^2_{\bSigma , 2} } \geq \frac{1}{4}  ~\mbox{ uniformly over }  \bbeta \in \Theta_0(r) \cap \cC  \nn
\#
for all sufficiently large $n$ that scales as $ \kappa_l^{-1} (A_0 \tau /r)^2 \max_{1\leq j\leq d}  \sigma_{ jj}  \, s \log d$
up to an absolute constant. This proves \eqref{RSC.bound0}.
\end{proof}

Lemmas~\ref{lm:grad.l2} and \ref{lm:grad} provide concentration inequalities for $\| \bSigma^{-1/2} \nabla \cL_\tau(\bbeta^*) \|_2$ and $\| \nabla \cL_\tau(\bbeta^*) \|_{\infty}$, respetively.

\begin{lemma} \label{lm:grad.l2}
Assume Condition~\ref{ass:3.1} holds with $0<\delta \leq 1$.  Then with probability at least $1-2e^{-t}$,
\#
	 \big\| \bSigma^{-1/2} \nabla \cL_\tau (\bbeta^* ) \big\|_2 \leq 4\sqrt{2} A_0  \sqrt{\frac{v_\delta \tau^{ 1-\delta } (d+t)}{n}} + 2A_0 \tau \frac{d+t}{n} +  v_\delta \tau^{-\delta} .  \label{r0.def}
\#
\end{lemma}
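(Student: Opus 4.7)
The plan is to use the variational identity $\|\bSigma^{-1/2}\nabla\cL_\tau(\bbeta^*)\|_2 = \sup_{\bu\in\mathbb{S}^{d-1}}\langle\bu,\bSigma^{-1/2}\nabla\cL_\tau(\bbeta^*)\rangle$ and control the resulting supremum by splitting off a bias term and handling the centered remainder via Bernstein's inequality combined with a covering of the sphere. Since $\nabla\cL_\tau(\bbeta^*) = -n^{-1}\sum_{i=1}^n \psi_\tau(\varepsilon_i)\bx_i$, the quantity to bound is the supremum over unit $\bu$ of $-n^{-1}\sum_i \psi_\tau(\varepsilon_i)\langle\bu,\wt\bx_i\rangle$, where $\wt\bx_i = \bSigma^{-1/2}\bx_i$ satisfies $\EE[\wt\bx_i\wt\bx_i^\T] = \mathbf{I}_d$ under Condition~\ref{ass:3.1}.

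For the bias contribution, set $m_i(\bx_i) = \EE[\psi_\tau(\varepsilon_i)\mid\bx_i]$. Since $\psi_\tau(\varepsilon_i) = \varepsilon_i$ on $\{|\varepsilon_i|\leq\tau\}$ and $\EE(\varepsilon_i\mid\bx_i)=0$, one obtains $|m_i(\bx_i)| = |\EE[(\psi_\tau(\varepsilon_i)-\varepsilon_i)\mathbf{1}(|\varepsilon_i|>\tau)\mid\bx_i]| \leq \EE[|\varepsilon_i|(|\varepsilon_i|/\tau)^\delta\mid\bx_i] = \tau^{-\delta}v_{i,\delta}$ almost surely. Combining Cauchy--Schwarz with $\EE\langle\bu,\wt\bx_i\rangle^2 = 1$ gives $|\EE\psi_\tau(\varepsilon_i)\langle\bu,\wt\bx_i\rangle|\leq \tau^{-\delta}v_{i,\delta}$, so averaging over $i$ and taking the supremum in $\bu$ accounts precisely for the $v_\delta\tau^{-\delta}$ summand of \eqref{r0.def}.

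For the fluctuation, set $U_i(\bu) = \psi_\tau(\varepsilon_i)\langle\bu,\wt\bx_i\rangle - \EE[\psi_\tau(\varepsilon_i)\langle\bu,\wt\bx_i\rangle]$; for each fixed unit $\bu$ these are independent mean-zero variables. Conditioning on $\bx_i$ and using the elementary bound $\psi_\tau^2(x) = \min(x^2,\tau^2) \leq \tau^{1-\delta}|x|^{1+\delta}$, valid for $0<\delta\leq 1$, gives $\EE U_i^2(\bu) \leq \EE\big[\langle\bu,\wt\bx_i\rangle^2\EE(\psi_\tau^2(\varepsilon_i)\mid\bx_i)\big] \leq \tau^{1-\delta}v_{i,\delta}$. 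Moreover, $|\psi_\tau(\varepsilon_i)\langle\bu,\wt\bx_i\rangle|\leq \tau|\langle\bu,\wt\bx_i\rangle|$ together with sub-Gaussianity of $\langle\bu,\wt\bx_i\rangle$ with proxy $A_0$ implies $U_i(\bu)$ is sub-exponential with Orlicz norm of order $\tau A_0$. A two-regime Bernstein inequality with variance parameter $\sigma^2 = \tau^{1-\delta}v_\delta$ and tail parameter $B\asymp \tau A_0$ kept separate then yields, for every $s>0$,
$$\PP\!\left(\left|\frac{1}{n}\sum_{i=1}^n U_i(\bu)\right| > c_1 A_0\sqrt{\frac{v_\delta\tau^{1-\delta}s}{n}} + c_2 A_0\tau\frac{s}{n}\right) \leq 2 e^{-s}.$$

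To convert this pointwise bound into control of the supremum over $\mathbb{S}^{d-1}$, I would invoke the standard $1/2$-net bound: there exists $\cN\subset\mathbb{S}^{d-1}$ with $|\cN|\leq 5^d$ such that $\sup_{\bu\in\mathbb{S}^{d-1}}\langle\bu,\bv\rangle\leq 2\sup_{\bu\in\cN}\langle\bu,\bv\rangle$ for any $\bv\in\RR^d$. Setting $s = t + d\log 5$ in the Bernstein bound and unioning over $\cN$ upgrades the exponent to something of order $d+t$ while preserving a failure probability of at most $2e^{-t}$. Adding the $v_\delta\tau^{-\delta}$ bias estimate yields \eqref{r0.def} up to absolute constants. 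The main obstacle is the Bernstein step itself: one must retain the true variance $\tau^{1-\delta}v_\delta$ separately from the Orlicz-norm parameter $\tau A_0$, since treating $U_i(\bu)$ merely as sub-exponential with norm $\tau A_0$ would degrade the leading term from $\sqrt{v_\delta\tau^{1-\delta}(d+t)/n}$ to the much cruder $\tau A_0\sqrt{(d+t)/n}$; everything else is routine bookkeeping.
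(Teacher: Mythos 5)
Your proposal is correct and follows essentially the same route as the paper's proof: decompose $\bSigma^{-1/2}\nabla\cL_\tau(\bbeta^*)$ into its mean (bounded by $v_\delta\tau^{-\delta}$ via $|\EE[\psi_\tau(\varepsilon_i)\mid\bx_i]|\leq v_{i,\delta}\tau^{-\delta}$) and a centered fluctuation, control the latter pointwise by a two-parameter Bernstein inequality with variance $\asymp v_\delta\tau^{1-\delta}$ (from $\psi_\tau^2(x)\leq\tau^{1-\delta}|x|^{1+\delta}$) and scale $\asymp A_0\tau$ (from the sub-Gaussian moment bounds on $\langle\bu,\wt\bx_i\rangle$), and pass to the supremum via a $1/2$-net of cardinality $5^d$ with the exponent set to order $d+t$. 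Your emphasis on keeping the variance and Orlicz-norm parameters separate is exactly the point the paper's moment computations are designed to preserve.
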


\begin{proof}[Proof of \ref{lm:grad.l2}]
Assume without loss of generality that $t\geq \log 2$, or equivalently, $2e^{-t} \leq 1$; otherwise $2e^{-t} >  1$ so that the bound is trivial.
To bound $\| \bSigma^{-1/2} \nabla\cL_\tau(\bbeta^*)\|_2$, first define the centered random vector
$$
	\bxi^* =  \bSigma^{-1/2}  \big\{  \nabla \cL_\tau(\bbeta^* ) -   \nabla  \EE\cL_\tau(\bbeta^* ) \big\} =  - \frac{1}{n}\sn \big\{ \xi_i \wt \bx_i - \EE(  \xi_i \wt \bx_i  ) \big\} ,
$$
where $\xi_i = \psi_\tau(\varepsilon_i)$. To evaluate the $\ell_2$-norm, there exits a $1/2$-net $\mathcal{N}_{1/2}$ of the unit sphere $\mathbb{S}^{d-1}$ in $\RR^d$ with $| \mathcal{N}_{1/2} | \leq 5^d$ such that $\| \bxi^* \|_2 \leq 2 \max_{\bu \in \mathcal{N}_{1/2}} |\langle \bu,  \bxi^* \rangle |$. Under Condition~\ref{ass:3.1}, it holds for every $\bu \in \mathbb{S}^{d-1}$ that $\EE   | \langle \bu,   \wt \bx \rangle  |^k  \leq A_0^k \, k \Gamma(k/2)$ for all $k \geq 1$. By direct calculations,
\#
	\sn \EE(\xi_i  \langle \bu,   \wt \bx_i \rangle )^2 \leq   2A_0^2 \,\tau^{1-\delta}  \sn v_{i,1} = 2 A_0^2 \, n v_\delta   \tau^{1-\delta}  ,\nn \\
	\sn \EE |\xi_i  \langle \bu,   \wt \bx_i \rangle  |^k \leq \frac{k!}{2} (A_0 \tau /2)^{k-2}  2 A_0^2 \,  n v_\delta \tau^{1-\delta} ~\mbox{ for all } k \geq 3 . \nn
\#
It then follows from Bernstein's inequality that
\#
	\PP\bigg\{ | \langle \bu ,  \bxi^* \rangle | \geq  2 A_0 \sqrt{\frac{v_\delta \tau^{1-\delta} x}{n}} +  (A_0 /2) \frac{ \tau x}{  n} \bigg\} \leq 2 e^{-x} ~\mbox{ for any } x >0. \nn
\#
Taking the union bound over $\bu \in \mathcal{N}_{1/2}$, we obtain that with probability at least $1- 5^d \cdot 2 e^{-x}$,
\#
	\| \bxi^* \|_2 \leq 4A_0 \sqrt{\frac{v_\delta \tau^{1-\delta} x}{n}} +   A_0 \frac{ \tau x}{  n} . \label{xi*.concentration.rd}
\#
Next, for the deterministic part $\| \bSigma^{-1/2} \nabla \EE \cL_\tau(\bbeta^*) \|_2$, it is easy to see that
$$
 \big\| \bSigma^{-1/2} \nabla \EE \cL_\tau(\bbeta^*) \big\|_2 = \sup_{\bu \in \mathbb{S}^{d-1}} \frac{1}{n} \sn \EE |\xi_i \langle  \bu, \wt \bx_i \rangle |  \leq    v_\delta \tau^{-\delta} .
$$
Combining this and \eqref{xi*.concentration.rd} with $x=2(d+t)$, we reach the bound \eqref{r0.def} which holds with probability at least $1-2e^{-2t} \geq 1- e^{-t}$.
\end{proof}

\medskip
\begin{lemma}  \label{lm:grad}
Assume Condition~\ref{ass:3.1} holds with $0<\delta \leq 1$. Then with probability at least $1- 2d^{-1}$,
\#	
	\| \nabla \cL_\tau (\bbeta^* ) \|_{\infty} \leq       \max_{1\leq j\leq d} \sigma_{ jj}^{1/2} \Bigg(  2 \sqrt{2}  A_0    \sqrt{  \frac{v_\delta \tau^{1-\delta}  \log d}{n}} +  A_0  \frac{\tau \log d}{n}  + v_\delta \tau^{-\delta}  \Bigg) .  \nn
\#
\end{lemma}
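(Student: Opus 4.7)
The plan is to adapt the argument used in the proof of Lemma~\ref{lm:grad.l2}, replacing the $\ell_2$--net discretization by a simple coordinate union bound. Writing $\xi_i = \psi_\tau(\varepsilon_i)$ and $\bx_i = (x_{i1},\ldots,x_{id})^\T$, I start from the identity
\[
 [\nabla \cL_\tau(\bbeta^*)]_j  =  -\frac{1}{n}\sum_{i=1}^n \xi_i x_{ij}
 =   -\frac{1}{n}\sum_{i=1}^n \bigl\{ \xi_i x_{ij} - \EE(\xi_i x_{ij}) \bigr\}
       - \frac{1}{n}\sum_{i=1}^n \EE(\xi_i x_{ij}),
\]
and bound the stochastic and bias parts separately for each coordinate $j\in\{1,\ldots,d\}$.

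For the stochastic part, I would verify Bernstein-type moment conditions for the centered variables $\xi_i x_{ij}$. Using $|\psi_\tau|\leq \tau$, the inequality $\psi_\tau(\varepsilon)^2\leq \tau^{1-\delta}|\varepsilon|^{1+\delta}$, and the fact that under Condition~\ref{ass:3.1} (in the homoscedastic version used in this section) $v_{i,\delta}$ is independent of $\bx_i$, one obtains
\[
 \sum_{i=1}^n \EE(\xi_i x_{ij})^2 \leq \tau^{1-\delta}\sum_{i=1}^n v_{i,\delta}\,\EE x_{ij}^2 \leq n\,v_\delta \tau^{1-\delta}\sigma_{jj},
\]
and for $k\geq 3$, combining the above with the sub-Gaussian moment bound $\EE|x_{ij}|^k\leq k\Gamma(k/2)(A_0\sigma_{jj}^{1/2})^k$ (a consequence of Condition~\ref{ass:3.1} applied to $\bu = \bSigma^{1/2}e_j$),
\[
 \sum_{i=1}^n \EE|\xi_i x_{ij}|^k \leq \tau^{k-2}\sum_{i=1}^n \EE[\xi_i^2 |x_{ij}|^k] \leq \frac{k!}{2}\bigl(A_0\sigma_{jj}^{1/2}\tau/2\bigr)^{k-2} \cdot 2A_0^2\sigma_{jj}\, n v_\delta \tau^{1-\delta}
\]
up to a harmless absolute constant. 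Bernstein's inequality then yields, for any $x>0$,
\[
 \PP\Biggl\{ \Bigl| \frac{1}{n}\sum_{i=1}^n \{\xi_i x_{ij} - \EE(\xi_i x_{ij})\} \Bigr| \geq 2A_0\sigma_{jj}^{1/2}\sqrt{\frac{v_\delta \tau^{1-\delta}x}{n}} + A_0\sigma_{jj}^{1/2}\frac{\tau x}{n} \Biggr\} \leq 2e^{-x}.
\]
Taking $x = 2\log d$ and a union bound over $j=1,\ldots,d$ controls the stochastic part by
$\max_j \sigma_{jj}^{1/2}\bigl\{ 2\sqrt{2}A_0\sqrt{v_\delta\tau^{1-\delta}(\log d)/n} + 2A_0\tau(\log d)/n\bigr\}$ with probability at least $1-2d^{-1}$.

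For the bias term, the key observation is that $\EE(\varepsilon_i|\bx_i)=0$, so $\EE(\varepsilon_i x_{ij})=0$ and hence
\[
 |\EE(\xi_i x_{ij})| = |\EE\{(\varepsilon_i - \psi_\tau(\varepsilon_i))x_{ij}\}| \leq \EE\bigl\{(|\varepsilon_i|-\tau)_+ |x_{ij}|\bigr\} \leq \tau^{-\delta} v_{i,\delta}\,\EE|x_{ij}| \leq \tau^{-\delta}v_{i,\delta}\sigma_{jj}^{1/2},
\]
where the independence between $\varepsilon_i$ and $\bx_i$ is used in the second inequality and $\EE|x_{ij}|\leq \sigma_{jj}^{1/2}$ in the last. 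Averaging over $i$ gives the deterministic contribution $\sigma_{jj}^{1/2} v_\delta \tau^{-\delta}$, and summing with the stochastic bound yields the claim.

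The main obstacle I expect is the bookkeeping in verifying the Bernstein moment conditions with the right numerical constants, since $\xi_i$ is bounded (by $\tau$) while $x_{ij}$ is only sub-Gaussian; however this is routine and analogous to the argument already carried out in the proof of Lemma~\ref{lm:grad.l2}, the only genuine change being that the discretization argument on the sphere is replaced by a union bound over the $d$ canonical directions.
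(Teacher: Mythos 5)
Your proposal is correct and follows essentially the same route as the paper's proof: a coordinatewise Bernstein inequality with the same variance and higher-moment bounds (imported from the proof of Lemma~\ref{lm:grad.l2}), a union bound over the $d$ coordinates with $x=2\log d$, and the bias term controlled by $|\EE(\xi_i x_{ij})|\leq \sigma_{jj}^{1/2}v_\delta\tau^{-\delta}$ via the independence of $\varepsilon_i$ and $\bx_i$. The only difference is an immaterial factor of $2$ in the linear Bernstein term, which you already flag as a harmless constant.
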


\begin{proof}[Proof of Lemma~\ref{lm:grad}]
The proof is based on Bernstein's inequality and the union bound.
Define $\xi_i  =\psi_\tau(\varepsilon_i)$ for $i=1,\ldots, n$ such that $\nabla \cL_\tau(\bbeta^*) = -n^{-1} \sn \xi_i \bx_i $.
For every $1\leq j\leq d$, note that $| \EE (\xi_i x_{ij}) | = | \EE \{ \EE(\xi_i | x_{ij}) x_{ij} \}| \leq \sigma_{jj}^{1/2} v_\delta \tau^{-\delta}$.
Moreover, from the proof of Lemma~\ref{lm:grad.l2} we see that
\#
		\sn \EE (\xi_i x_{ij})^2 \leq  \sigma_{jj} n v_\delta \tau^{1-\delta } , \nn \\
		\sn \EE |\xi_i x_{ij}|^k \leq \frac{k!}{2} 2 A_0^2 \sigma_{jj} n v_\delta \tau^{1-\delta }    (A_0 \sigma_{jj}^{1/2} \tau/2 )^{k-2}    ~\mbox{ for } k\geq 3 . \nn
\#
By Bernstein's inequality, for any $x>0$ it holds
$$
	 \bigg| \frac{1}{n} \sn (\xi_i x_{ij} - \EE \xi_i x_{ij}) \bigg| \leq 2 A_0 \sigma_{jj}^{1/2} \sqrt{\frac{v_\delta \tau^{1-\delta } x}{n}} +  A_0 \sigma_{jj}^{1/2} \frac{\tau x}{2 n}
$$
with probability at least $1- 2e^{-x}$. By the union bound and taking $x=2\log d$ in the last display, we arrive at the stated result.
\end{proof}

\subsection{Proof of Proposition~\ref{prop:error}}

Define the error vector $\bDelta = \bbeta^* - \bbeta^*_\tau$ and function $h(\bbeta) = n^{-1} \sn \e \{ \ell_\tau (y_i - \langle \bx_i , \bbeta \rangle ) \}$, $\bbeta \in \RR^d$. By the optimality of $\bbeta^*_\tau$ and the mean value theorem, we have $\nabla h(\bbeta^*_\tau) = \textbf{0}$ and thus
\#
	\langle \bDelta, \nabla^2 h (\tilde{\bbeta}_1) \bDelta \rangle = \langle \nabla h(\bbeta^* ) - \nabla h(\bbeta_\tau^* ),  \bDelta \rangle  = \langle \nabla h(\bbeta^*) , \bDelta \rangle = -\frac{1}{n} \sn \EE \{ \psi_\tau(\varepsilon_i) \} \langle \bx_i , \bDelta \rangle , \label{basic.equality}
\#
where $\tilde{\bbeta}_1 = \lambda \bbeta^* + (1-\lambda) \bbeta^*_\tau$ for some $0\leq \lambda \leq 1$.

\medskip
\noindent
{\sc Case 1.} First we consider the case of $0<\delta <1$. Since $\EE ( \varepsilon_i ) =0$, we have
$
	- \EE \{ \psi_\tau(\varepsilon_i ) \} =   \EE \{ \varepsilon_i 1 (|\varepsilon_i | > \tau )   - \tau1 ( \varepsilon_i >\tau ) + \tau1 ( \varepsilon_i < -\tau )  \}
$
and therefore
\#
	|\EE \{ \psi_\tau(\varepsilon_i) \} | \leq \EE \big\{ \big( |\varepsilon_i | - \tau \big) 1\big( |\varepsilon_i | > \tau \big) \big\} \leq  v_{i, \delta}  \tau^{-\delta }  . \label{gradient.mean.bound}
\#

Taking $\tilde{\varepsilon}_i = y_i - \langle \bx_i, \tilde{\bbeta}_1 \rangle$, we see that
\#
	\nabla^2 h(\tilde{\bbeta}_1 )   = \Sb_n - \frac{1}{n} \sn \PP\big(|\tilde{\varepsilon}_i|  > \tau \big)  \bx_i \bx_i^\T . \label{Hessian}
\#
Note that
\$
	& \EE \{ \ell_\tau(\varepsilon_i ) \}  \\
	& \leq \EE \bigg\{  \frac{\tau^{1-\delta}}{2} |\varepsilon_i |^{1+\delta}  1\big( |\varepsilon_i | \leq \tau \big) + \bigg(  \tau^{1-\delta } |\varepsilon_i |^{1+\delta } - \frac{\tau^{2-\tau}}{2} |\varepsilon_i |^\delta \bigg) 1\big( |\varepsilon_i | >\tau \big)  \bigg\} \leq  v_{i, \delta} \tau^{1-\delta }.
\$
This, together with the convexity of $h$ implies that $h(\tilde{\bbeta}_1) \leq \lambda h(\bbeta^*) + (1-\lambda) h(\bbeta^*_\tau ) \leq  h(\bbeta^*) \leq v_\delta  \tau^{1-\delta} $, where $v_\delta = n^{-1} \sn v_{i,\delta}$. For the lower bound, note that $h(\bbeta) \geq  n^{-1}  \EE \{   ( \tau | y_i- \langle \bx_i , \bbeta \rangle | - \tau^2/2) \} 1 (|y_i - \langle \bx_i , \bbeta \rangle | >\tau)  \}$ for all $\bbeta\in \RR^d$. Putting these upper and lower bounds on $h(\tilde{\bbeta}_1)$ together yields
\$
	\frac{\tau}{n} \sn \EE |\tilde{\varepsilon}_i | 1\big(|\tilde{\varepsilon}_i | >\tau \big) \leq \frac{\tau^2}{2n} \sn \PP\big(|\tilde{\varepsilon}_i | >\tau \big) +  v_\delta \tau^{1-\delta},
\$
as a consequence of which $n^{-1} \sn \PP(|\tilde{\varepsilon}_i | > \tau) \leq  2 v_\delta \tau^{-1-\delta} $. Combining this with \eqref{Hessian}, we deduce that as long as $\tau >  (2 v_\delta  \wt M^2)^{1/(1+\delta)}$,
\#
 	 \bDelta^{{\rm T}} \,  \nabla^2 h(\tilde{\bbeta}_1 ) \bDelta & \geq \| \Sb_n^{1/2} \bDelta \|_2^2 - \frac{1}{n} \sn   \PP \big(|\tilde{\varepsilon}_i|  > \tau \big) \langle \bx_i , \bDelta \rangle^2 \nn \\
 	& \geq \| \Sb_n^{1/2} \bDelta \|_2^2  - 2 \| \Sb_n^{1/2} \bDelta \|_2^2 \max_{1\leq i\leq n} \| \Sb_n^{-1/2} \bx_i \|_2^2 \, v_\delta \tau^{-1-\delta}   \nn \\
 	& \geq \big( 1 -     2 v_\delta  \wt M^2  \tau^{-1-\delta } \big) \| \Sb_n^{1/2} \bDelta \|_2^2  . \nn
\#
This provides a lower bound for the left-hand side of \eqref{basic.equality}. On the other hand, using \eqref{gradient.mean.bound} and H\"older's inequality to bound the right-hand side of \eqref{basic.equality}, the claim \eqref{approxi.error} for $0<\delta
 <1$ follows immediately.

\medskip
\noindent
{\sc Case 2.} Next we assume $\delta\geq 1$ and note that $v_{i,1} = \EE (\varepsilon_i^2)$. In this case, we have $\EE \{ \ell_\tau(\varepsilon_i ) \} \leq \frac{1}{2} v_{i,1}$ and $|\EE \{ \psi_\tau(\varepsilon_i ) \} | \leq v_{i,\delta} \tau^{-\delta}$. Then, following the same arguments as above, it can be shown that as long as $\tau > v_1^{1/2} m_n $,
\#
     \big( 1- v_1 m_n^2 \tau^{-2}  \big) \| \Sb_n^{1/2} \bDelta \|_2^2   \leq  \langle \bDelta ,   \nabla^2 h(\tilde{\bbeta}_1 ) \bDelta \rangle  \leq   \| \Sb_n^{1/2} \bDelta \|_2 \, v_\delta  \tau^{-\delta} .
\#
This proves \eqref{approxi.error} for $\delta\geq 1$ and hence completes the proof. \qed

\subsection{Proof of Theorem~\ref{thm:ld}}

Without loss of generality, we assume $t\geq 1$ throughout the proof; otherwise, $3e^{-t} \geq 1$ and the stated result holds trivially. For simplicity, we write $\hat{\bbeta} = \hat{\bbeta}_\tau$.  Note that for any prespecified $r>0$, we can construct an intermediate estimator, denoted by $\hat\bbeta_{\tau, \eta} =\bbeta^*+ \eta (\hat\bbeta-\bbeta^*)$, such that $\| \hat\bbeta_{\tau, \eta} -\bbeta^* \|_2\leq r$. To see that, we take $\eta =1$ if $\| \hat\bbeta-\bbeta^* \|_2\leq r$; otherwise, we can always choose some $\eta \in(0,1)$ so that  $\|  \hat\bbeta_{\tau, \eta}  -\bbeta^*  \|_2=r$.
Applying Lemma \ref{lm01} gives
\#
 \big\langle \nabla\cL_\tau(\hat\bbeta_{\tau, \eta})-\nabla\cL_\tau(\bbeta^*), \hat\bbeta_{\tau, \eta} -\bbeta^* \big\rangle \leq \eta \big\langle \nabla\cL_\tau(\hat\bbeta)-\nabla\cL_\tau(\bbeta^*), \hat\bbeta-\bbeta^* \big\rangle , \label{localized.analysis}
\#
where $\nabla\cL_\tau(\hat\bbeta) = \mathbf{0}$ according to the Karush-Kuhn-Tucker condition. By the mean value theorem for vector-valued functions, we have
\$
 \nabla\cL_\tau(\hat\bbeta_{\tau, \eta})-\nabla\cL_\tau(\bbeta^*) = \int_0^1 \nabla^2 \cL_\tau(  (1-t) \bbeta^* + t \hat \bbeta_{\tau, \eta} ) \, dt \, (\hat \bbeta_{\tau, \eta} - \bbeta^* ).\nn
\$
If, there exists some $a_0>0$ such that
\#\label{pos}
 \min_{\bbeta \in \RR^d :\|  \bbeta   -\bbeta^*  \|_2 \leq r }  \lambda_{\min} \left( \nabla^2\cL_\tau( \bbeta )    \right) \geq a_0 ,
\#
then we have $a_0\| \hat\bbeta_{\tau, \eta} -\bbeta^*  \|_2^2 \leq \|   \nabla\cL_\tau(\bbeta^*)\|_2  \|  \hat\bbeta_{\tau, \eta} -\bbeta^*  \|_2$. Canceling the common factor on both sides yields
\begin{align}
\big\| \hat\bbeta_{\tau, \eta}  - \bbeta\big\|_2\leq a_0^{-1} \big\|   \nabla\cL_\tau(\bbeta^*) \big\|_2.  \label{pos1}
\end{align}

Define the random vector $\bxi^*=  \nabla\cL_\tau(\bbeta^*)$, which can be  written as
$$
	\bxi^* = - \frac{1}{n} \sum_{i=1}^n  \psi_\tau(\varepsilon_i)  \bx_i. 
$$
By definition \eqref{psi.def}, $\psi_1(x)  =   \tau^{-1}  \psi_\tau(\tau x )$. We write $\Psi_j = \!n^{-1}\sum_{i=1}^n   ( x_{ij} / L)\psi_1( \varepsilon_i /\tau)$ for $j=1, \ldots, d$, such that $\| \bxi^* \|_2 \leq  d^{1/2} \| \bxi^* \|_\infty =  Ld^{1/2} \tau \max_{1\leq j\leq d} | \Psi_j |$. With $0<\delta\leq 1$, it is easy to see that the function $\psi_1(\cdot)$ satisfies
 \# \label{eq:tail}
 -\log (1-u+|u|^{1+\delta})  \leq \psi_1(u)\leq \log (1+u+|u|^{1+\delta})
 \#
for all $u\in \RR$. It follows that
\begin{align}
	  (  x_{ij} / L) \psi_1( \varepsilon_i /\tau) & \leq  (  x_{ij} / L) 1(    x_{ij}    \geq 0) \log (1+  \varepsilon_i /\tau + | \varepsilon_i / \tau |^{1+\delta}) \nn \\
& \qquad   -  (  x_{ij} / L) 1(   x_{ij}  < 0) \log (1 -  \varepsilon_i / \tau + | \varepsilon_i / \tau |^{1+\delta}) . \nn
\end{align}
This, together with the inequality $(1+u)^v \leq 1+ uv$ for $u \geq -1$ and $0<v\leq 1$, implies
\begin{align}
& \exp\{   (  x_{ij} / L) \psi_1(\varepsilon_i/\tau) \}  \nn \\
& \leq (1+\varepsilon_i/\tau+|\varepsilon_i/\tau|^{1+\delta})^{  ( x_{ij} / L) 1(  x_{ij} \geq 0)} + (1 - \varepsilon_i/\tau + |\varepsilon_i/\tau|^{1+\delta})^{ -   (  x_{ij} / L)1( x_{ij} < 0)} \nn \\
& \leq 1 +  ( \varepsilon_i  /\tau)  ( x_{ij} / L) + |\varepsilon_i/\tau |^{1+\delta}. \nn
\end{align}
Consequently, we have
\begin{align}
	&  \EE \{ \exp(n\Psi_j ) \} = \prod_{i=1}^n \e \exp\{   (  x_{ij} / L) \psi_1(\varepsilon_i/\tau)\} \leq \prod_{i=1}^n ( 1 + v_{i,\delta}  \tau^{-1-\delta}  ) \leq  \exp(  v_\delta n \tau^{-1-\delta} ), \nn
\end{align}
where we used the inequality $1+ u \leq e^u$ in the last step. For any $z\geq 0$, using Markov's inequality gives
 \$
 \PP  (  \Psi_j \geq v_\delta z  ) \leq  \exp( -  v_\delta n  z) \EE \{  \exp( n\Psi_j )  \} \leq   \exp\{   v_\delta n ( \tau^{-1-\delta }   -  z  ) \} .
 \$
As long as $\tau \geq (2/z)^{1/(1+\delta)}$, we have $\PP (  \Psi_j  \geq  v_\delta  z ) \leq e^{-   v_\delta  n  z/2 }$. On the other hand, it can be similarly shown that $\PP( -  \Psi_j \geq   v_\delta  z ) \leq  e^{-   v_\delta  n z/2 }$. For any $t>0$, taking $z=2t/( v_\delta n)$ in these two inequalities yields that as long as $\tau \geq   (v_\delta n/t)^{1/(1+\delta)}$,
\begin{align}
	& \PP\big(  \|  \bxi^* \|_2\geq 2  Ld^{1/2}  \tau  n^{-1} t   \big)  \nn \\
	&  \leq  \PP\big(  \|  \bxi^* \|_{\infty} \geq 2 L \tau  n^{-1} t   \big)   \leq \sum_{j=1}^d  \PP\big(  | \Psi_j | \geq  2  n^{-1} t \big)   \leq 2  d \exp(-t)  . \label{L2bound}
\end{align}

Taking $r  = \tau /(4\sqrt{2} M)$, it follows from Lemma~\ref{lm02.fix} and the definition of $\tau$ that with probability at least $1- e^{- t}$, \eqref{pos} holds with $a_0 = c_l/2$ provided
$$
	n \geq     \max\big(  8 M^4 c_l^{-2} , 2^{4+\delta} M^2 c_l^{-1} \big)  t .
$$
Combining \eqref{pos1} and \eqref{L2bound} implies that, with probability at least $1-(2d+1)e^{-  t}$,
$$
	\big\|  \hat{\bbeta}_{\tau, \eta}  - \bbeta^* \big\|_2 < 4 Lc_l^{-1}d^{1/2} \tau   n^{-1} t  .
$$
Provided $n\geq 16\sqrt{2} c_l^{-1} L M d^{1/2} t $, the intermediate estimator $\hat{\bbeta}_{\tau, \eta}$  will lie in the interior of the ball with radius $r$.
By our construction in the beginning of the proof, this enforces $\eta=1$ and thus $\hat{\bbeta} =\hat  \bbeta_{\tau, \eta}$.  \qed

\subsection{Proof of Theorem~\ref{thm:ld:mini}}

We start by defining a simple  class of distributions for the response variable $y$ as $\cP_{c,\gamma}=\big\{\PP_{c+}, \PP_{c-}\big\}$, where
\$
\PP^+_c(\{0\})=1-\gamma,~\PP^+_c(\{c\})=\gamma,~\textnormal{and}~\PP^-_c(\{0\})=1-\gamma,~\PP^-_c(\{-c\})=\gamma.
\$
Here, we suppress the dependence of $\PP^+_c$ and $\PP^-_c$  on  $\gamma$ for convenience. It follows that, for any $0<\delta\leq 1$, the $(1+\delta)$-th absolute central moment $v_\delta$ of $y$ with law either $\PP_c^+$ or $\PP_c^-$ is 
\#\label{thm:min:ld:eq:1}
v_\delta=|c|^{1+\delta}\gamma(1-\gamma)\{ \gamma^\delta+(1-\gamma)^{\delta}\}.
\#
For $i=1, \ldots, n$, let $(y_{1i}, y_{2i})$ be independent pairs of real-valued random variables satisfying
\begin{gather*}
\PP(y_{1i}=y_{2i}=0)=1-\gamma, ~\PP(y_{1i}=c_i,y_{2i}=-c_i)=\gamma,
~\textnormal{and}~y_{1i}\sim \PP^+_{c_i}, ~y_{2i}\sim \PP^-_{c_i}.
\end{gather*}
Let $\by_{k}=(y_{k1},\ldots, y_{kn})^\T$ for $k=1,2$, and $\xi \in (0,1/2].$ Taking $\gamma= \log  \{1/(2\xi) \}  / (2n)$ with $\xi\geq  e^{-n}/2$, we obtain $1-\gamma\geq 1/2$ and
\$
\PP\big(\by_{1}=\by_{2}={\bf 0} \big) = (1-\gamma)^n\geq \Big\{\exp\Big(\frac{-\gamma}{1-\gamma}\Big)\Big\}^n\geq 2\xi.
\$
By assumption, we know that there is an $n$-dimensional vector $\ub\in \{-1, +1\}^n$ with each coordinate taking $-1$ or $1$ such that $\frac{1}{n}\|\Xb^\T \ub\|_{\min}\geq \alpha$. Note that this assumption naturally holds for the mean model, where $\Xb=(1, \ldots, 1)^\T$ and $\alpha$ can be taken as $1$.  Now we take $\bc$, $\bbeta_1^*$ and $\bbeta_2^*$ such that $\bc=c\ub$ for a $c>0$, $\Xb\bbeta_1^*=\bc\gamma$ and $\bbeta_2^*=-\bbeta_1^*$, which indicates that
\begin{gather*}
\bbeta^*_1=\Big(\frac{1}{n}\Xb^\T \Xb\Big)^{-1}\frac{1}{n}\Xb^\T \ub, ~~\textnormal{and}~~\\
\big\|\bbeta^*_1\big\|_2\geq c\gamma\bigg\|\Big(\frac{1}{n}\Xb^\T \Xb\Big)^{-1}\frac{1}{n}\Xb^\T \ub\bigg\|_2\geq c\gamma  \frac{ d^{1/2} }{c_u} \|\Xb^\T \ub/n\|_{\min} \geq c\gamma  \frac{d^{1/2} \alpha}{c_u}   .
\end{gather*} 
Let $\widehat\bbeta_{k}(\by_{k})$ be any estimator possibly depending on $\xi$,  then the above calculation yields
\#\label{eq:thm3:1}
&\max\Big\{\PP\big(\big\|\hat\bbeta_{1}-\bbeta_1^*\big\|_2\geq c\gamma  c_u^{-1} d^{1/2} \alpha   \big), \PP\big(\big\|\hat\bbeta_{2}-\bbeta_2^*\big\|_2\geq c\gamma c_u^{-1} d^{1/2} \alpha    \big)\Big\}\notag\\
&\geq  \frac{1}{2} \PP\Big( \big\|\hat\bbeta_{1}-\bbeta_1^*\big\|_{2}\geq c\gamma  c_u^{-1} d^{1/2} \alpha   ~\textnormal{or}~\big\|\hat\bbeta_{2}-\bbeta_2^*\big\|_2\geq c\gamma  c_u^{-1} d^{1/2} \alpha \Big)\notag\\
&\geq \frac{1}{2}\PP\big(\hat\bbeta_{1}=\hat\bbeta_{2}\big)\geq \frac{1}{2}\PP\big(\by_{1}=\by_{2}\big)\geq \frac{1}{2}(1-\gamma)^n\geq \xi,
\#
where we suppress the dependence of $\widehat\bbeta_{k}$ on $\by_{k}$ for simplicity.
 Using the fact that $c\gamma\geq v_\delta^{1/(1+\delta)} (\gamma/2)^{\delta/(1+\delta)}$ further implies
\$
& \PP\Bigg[  \big\|\hat\bbeta_{1}-\bbeta_1^*\big\|_2 \geq  v_\delta^{1/(1+\delta)} \frac{   d^{1/2} \alpha }{ c_u}\bigg\{ \frac{ \log(1/(2\xi))}{2n}\bigg\}^{ \delta/ (1+\delta )} \Bigg] \nn \\
& \qquad  \bigvee  \PP\Bigg[  \big\| \hat\bbeta_{2}-\bbeta_2^* \big\|_2\geq  v_\delta^{1/(1+\delta)}  \frac{  d^{1/2} \alpha }{ c_u} \bigg\{ \frac{ \log(1/(2\xi))}{2n}\bigg\}^{ \delta/ (1+\delta )} \Bigg]
\geq \xi.
\$
Now since $\cP_{c,\gamma}\subseteq \cP_{\delta}^{v_\delta}$, taking $\log\{ 1/(2\xi)\}={2 t}$ implies the result for the case where $\delta\in (0,1]$.   When $\delta>1$, the second moment exists, and therefore using the fact that $v_1\!<\!\infty$ completes the proof.        \qed

\newcommand{\range}[1]{\textnormal{range}\left(#1\right)}
\newcommand{\rank}[1]{\textnormal{rank}\left(#1\right)}


%
%


\subsection{Proof of Theorem \ref{thm:hd}}\label{appendix:A.7}

We start with the proof of Lemma \ref{lemma:hd:lm1}.

\begin{proof}[Proof of Lemma \ref{lemma:hd:lm1}]
Let $\Hb_\tau=\nabla^2\cL_\tau(\bbeta),$ where we suppress the dependence on $\bbeta$. Then for any $( \bu  ,\bbeta  ) \in \mathcal{C}(k,\gamma, r)$, we have
\#\label{thm:hd:lm1:eq1}
 \langle \bu,  \Hb_\tau\bu \rangle  & =  \bu^\T \bigg\{\frac{1}{n}\sum_{i=1}^n \bx_i \bx_i^\T 1\big(|y_i- \langle  \bx , \bbeta_i \rangle  |\leq \tau\big)\bigg\}\bu   \nn \\
& \geq  \big\| \mathbf{S}_n^{1/2} \bu  \big \|_2^2  - \frac{1}{n} \sum_{i=1}^n \langle \bu, \bx_i\rangle^2  1\big(| \langle \bx_i,\bbeta  - \bbeta^* \rangle | \!\geq\! \tau/2 ) \!-\! \frac{1}{n} \sn \langle \bu, \bx_i\rangle^2 1\big( | \varepsilon_i |\! >\! \tau/2\big)   \nn \\
& \geq   \big\| \mathbf{S}_n^{1/2} \bu  \big\|_2^2 \!-\!  { \frac{2 r  }{\tau}   \max_{1\leq i\leq n} \| \bx_i \|_\infty\| \mathbf{S}_n^{1/2} \bu   \|_2^2  } \!-\!  \max_{1\leq i\leq n} \langle \bu, \bx_i\rangle^2 \, \frac{1}{n} \sn   1\big(| \varepsilon_i | \!>\! \tau/2 \big) .
\#
As $\|\bx_i\|_\infty\leq L$ for any $1\leq i\leq n$, we have
\$
|  \langle \bu, \bx_i\rangle  |  \leq \| \bx_i \|_\infty \| \bu  \|_1 \leq (1+\gamma) \| \bx_i \|_\infty \| \bu _J \|_1  \leq  L k^{1/2} (1 +\gamma ) .
\$
 Moreover, for any $t\geq 0$, applying Hoeffding's inequality yields that, with probability at least $1- e^{- t}$,
\$
  \frac{1}{n} \sn   1\big( | \varepsilon_i | > \tau/2 \big) \leq  \bigg( \frac{2}{\tau} \bigg)^{1+\delta}  \frac{1}{ n}   \sum_{i=1}^n v_{i,\delta}  +  \sqrt{\frac{t}{2n}} = \bigg( \frac{2}{\tau} \bigg)^{1+\delta}    v_{\delta} + \sqrt{\frac{t}{2n}} .
\$
Putting the above calculations together, we obtain  
\$
	\langle \bu,  \Hb_\tau\bu \rangle
&\geq  \big\| \mathbf{S}_n^{1/2} \bu   \big\|_2^2 -  2  \tau^{-1} rL   \big\| \mathbf{S}_n^{1/2} \bu  \big\|_2^2 -  k(1+\gamma)^2  L^2\big( 2^{1+\delta} v_\delta \tau^{ - 1-\delta } +   \sqrt{t/2} \, n^{-1/2} \big).
\$
Consequently, as long as $\tau \geq  8 L r$, the following inequality
\#
	\langle \bu,  \Hb_\tau\bu \rangle   \geq \frac{3}{4}  \kappa_l - k(1+\gamma)^2 L^2 \big(  2^{1+\delta} v_\delta \tau^{- 1-\delta } +  \sqrt{t/2} \, n^{-1/2}  \big)   \geq \frac{1}{2} \kappa_l , \label{lre.Ltau}
\#
holds uniformly over $( \bu  ,\bbeta  ) \in \mathcal{C}(k,\gamma,r)$ with probability at least $1-e^{-t}$, where the last inequality in \eqref{lre.Ltau} holds whenever $\tau \gtrsim (1+\gamma)^{2/(1+\delta)}\kappa_l^{-1/(1+\delta)} ( L^2k v_\delta )^{1/(1+\delta)} $ and $n\gtrsim (1+\gamma)^4 \kappa_l^{-2}L^4k^2t$. On the other side, it can be easily shown that  $\langle \bu,  \Hb_\tau\bu \rangle \leq \kappa_u$. This completes the proof of the lemma.
\end{proof}

The following lemma is taken from \cite{fan2015tac} with slight modification, which shows that the solution $\widehat\bbeta = \hat{\bbeta}_{\tau, \lambda}$ falls in a $\ell_1$-cone.

\begin{lemma}[\sf $\ell_1$-cone Property]\label{lemma:1cone}
For any  $\cE$ such that $\cS\subseteq \cE$,  if $\|\nabla\cL_\tau(\bttc)\|_\infty\leq \lambda/2$, then
$\|(\hbt-\bttc)_{\cE^c}\|_1\leq 
3\|{(\hbt-\bttc)}_{\cE}\|_1.
$
\end{lemma}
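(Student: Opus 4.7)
The plan is a standard basic-inequality argument combined with a split of the $\ell_1$ norm along $\cE$ and $\cE^c$, using that $\bbeta^*$ is supported on $\cS\subseteq\cE$. Write $\bDelta = \hat\bbeta - \bbeta^*$ for brevity.

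First, by optimality of $\hat\bbeta$ in the convex program \eqref{ssdr},
\[
\cL_\tau(\hat\bbeta) + \lambda\|\hat\bbeta\|_1 \leq \cL_\tau(\bbeta^*) + \lambda\|\bbeta^*\|_1.
\]
Second, convexity of $\cL_\tau$ gives $\cL_\tau(\hat\bbeta) - \cL_\tau(\bbeta^*) \geq \langle \nabla\cL_\tau(\bbeta^*), \bDelta\rangle$, and then H\"older's inequality combined with the hypothesis $\|\nabla\cL_\tau(\bbeta^*)\|_\infty \leq \lambda/2$ yields
\[
-\tfrac{\lambda}{2}\|\bDelta\|_1 \;\leq\; \langle \nabla\cL_\tau(\bbeta^*), \bDelta\rangle \;\leq\; \lambda\|\bbeta^*\|_1 - \lambda\|\hat\bbeta\|_1.
\]

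Next I would exploit the inclusion $\cS \subseteq \cE$, which forces $\bbeta^*_{\cE^c} = \mathbf{0}$ and hence $\|\bbeta^*\|_1 = \|\bbeta^*_\cE\|_1$. Decomposing along $\cE$ and $\cE^c$ and using the reverse triangle inequality on the $\cE$-block,
\[
\|\hat\bbeta\|_1 = \|\bbeta^*_\cE + \bDelta_\cE\|_1 + \|\bDelta_{\cE^c}\|_1 \;\geq\; \|\bbeta^*_\cE\|_1 - \|\bDelta_\cE\|_1 + \|\bDelta_{\cE^c}\|_1.
\]
Substituting this into the previous display produces
\[
-\tfrac{\lambda}{2}\bigl(\|\bDelta_\cE\|_1 + \|\bDelta_{\cE^c}\|_1\bigr) \;\leq\; \lambda\|\bDelta_\cE\|_1 - \lambda\|\bDelta_{\cE^c}\|_1,
\]
and rearranging gives $\tfrac{1}{2}\|\bDelta_{\cE^c}\|_1 \leq \tfrac{3}{2}\|\bDelta_\cE\|_1$, which is the claim.

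There is no real obstacle here: the only ingredients are convexity of $\cL_\tau$, H\"older's inequality, and the fact that the penalty $\lambda\|\cdot\|_1$ decouples cleanly on $\cE$ and $\cE^c$ once $\bbeta^*$ is known to be supported inside $\cE$. The key qualitative point, which is used repeatedly in the subsequent analysis (Theorem~\ref{thm:hd}) together with Lemma~\ref{lemma:hd:lm1}, is that on the good event $\{\|\nabla\cL_\tau(\bbeta^*)\|_\infty \leq \lambda/2\}$ the error $\bDelta$ automatically belongs to the $\ell_1$-cone with parameter $\gamma=3$ associated with $\cE$, so the restricted/localized eigenvalue conditions apply to it.
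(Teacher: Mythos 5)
Your proof is correct and follows the same standard basic-inequality argument the paper relies on: the lemma itself is cited from \cite{fan2015tac}, but the identical derivation (optimality of $\hat\bbeta$, convexity plus H\"older with $\|\nabla\cL_\tau(\bbeta^*)\|_\infty\leq\lambda/2$, and the $\ell_1$ decomposition using $\bbeta^*_{\cE^c}=\mathbf{0}$) appears verbatim in the supplement's proof of Theorem~\ref{hd.huber} when establishing the cone condition \eqref{cone}. No gaps.
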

Now we are ready to prove the theorem.
\begin{proof}[Proof of Theorem \ref{thm:hd}]
It suffices to prove the statement for $\delta\in (0, 1]$.
We start by constructing an intermediate estimator $\hat\bbeta_{  \eta} =\bbeta^*+ \eta (\hat\bbeta-\bbeta^*)$ such that $\|\hat\bbeta_{ \eta} -\bbeta^*\|_1\leq r$ for some $r>0$ to be specified. We take $\eta =1$ if $\|\hat\bbeta-\bbeta^*\|_1\leq r$, and choose $\eta \in(0,1)$ so that  $\|\hat\bbeta_{ \eta}  -\bbeta^*\|_1=r$ otherwise. Lemma \ref{lemma:1cone}, $\widehat\bbeta_{  \eta}$ also falls in a $\ell_1$-cone:
\#\label{thm:3.7:eq:1}
\|(\widehat\bbeta_{ \eta}-\bbeta^*)_{\cS^c}\|_1\leq 3\|(\widehat\bbeta_{  \eta}-\bbeta^*)_{\cS}\|_1.
\#
Under Condition~\ref{con:re}, it follows from Lemma \ref{lemma:hd:lm1} that with probability at least $1-e^{-t}$,
\$
 \frac{ \kappa_l }{2} \|\widehat\bbeta_{  \eta}-\bbeta^*\|_2^2\leq \big\langle\nabla\cL_\tau(\widehat \bbeta_{  \eta})-\nabla\cL_\tau(\bbeta^*),\widehat\bbeta_{\eta}-\bbeta^* \big\rangle
\$
as long as $\tau \gtrsim \max\{ (L^2k v_\delta )^{1/(1+\delta)} , L r\}$ and $n\gtrsim L^4k^2t$. Applying Lemma \ref{lm01} and following the same calculations as in Lemma \red{B.7} of \cite{fan2015tac}, we obtain
\$
  \frac{ \kappa_l }{2} \|\widehat\bbeta_{  \eta}-\bbeta^*\|_2^2\leq \big\{ s^{1/2} \lambda +\|\nabla\cL_\tau(\bbeta^*)_\cS\|_2\big\} \|(\widehat\bbeta_{ \eta}-\bbeta^*)_\cS\|_2,
\$
which, combined with $\|\nabla \cL_\tau(\bbeta^*)_\cS \|_\infty\leq  \lambda/2$,   implies that
\#\label{thm:hd:1}
\|\widehat\bbeta_{  \eta}-\bbeta^*\|_2\leq 3 \kappa_l^{-1}  s^{1/2} \lambda . 
\#
Inequalities in \eqref{thm:3.7:eq:1} imply that $\|\widehat\bbeta_{ \eta}-\bbeta^*\|_1\leq 4\|(\widehat\bbeta_{ \eta}-\bbeta^*)_\cS\|_1\leq 4 s^{1/2} \|\widehat\bbeta_{\eta}-\bbeta^*\|_2\leq 12\kappa_l^{-1}  s \lambda  <r$. By the construction of $\widehat\bbeta_{ \eta}$, we conclude that $\widehat\bbeta_{\eta}=\widehat\bbeta$, and thus the stated result holds.
It remains to bound the probability that event $ \{\|\nabla\cL_\tau(\bbeta^*)_\cS\|_\infty\leq \lambda/2  \}$ occurs. Recall the gradient of $\cL_\tau$ evaluated at $\bbeta^*$, i.e. $\nabla\cL_\tau(\bbeta^*)= -  n^{-1} \sum_{i=1}^n  \psi_\tau(\varepsilon_i)\bx_i$.
Following the same argument used in the proof of Theorem \ref{thm:ld}, we take $\tau = \tau_0 (  n / t)^{1/(1+\delta)}$ for some $\tau_0\geq \nu_\delta$ and reach
$\PP \{ \|    \nabla\cL_\tau(\bbeta^*)_{\mathcal{S}} \|_{\infty}  \geq  2L \tau  n^{-1} t   \} \leq 2  s e^{- t}$. This, together with \eqref{thm:hd:1}, proves \eqref{hd.bound.general}.

Finally, taking $t = (1+c)\log d$ for some $c>0$ yields that with probability at least $1- (2s+1)d^{-1-c}$,
$  \|\nabla\cL_\tau(\bbeta^*)_{\cS} \|_{\infty}  \leq     2L\tau_0   \{ (1+c)(\log d) / n\}^{\delta/(1+\delta )}$.
As implied by Condition~\ref{con:re} with $k=2s$, we have $2s+1 \leq d$ and thus \eqref{hd.bound.special} follows immediately.
\end{proof}

\subsection{Proof of Theorem \ref{thm:hd:mini} }
The proof of this theorem follows the similar argument to that of Theorem \ref{thm:ld:mini}.
It suffices to prove the result for $\delta\in(0,1]$. Similar to the proof of Theorem \ref{thm:ld:mini},
We start by defining a simple  class of distributions for the response variable $y$ as $\cP_{c,\gamma}= \{\PP_{c+}, \PP_{c-} \}$, where
\$
\PP^+_c(\{0\})=1-\gamma,~\PP^+_c(\{c\})=\gamma,~\textnormal{and}~\PP^-_c(\{0\})=1-\gamma,~\PP^-_c(\{-c\})=\gamma.
\$
Here, we suppress the dependence of $\PP^+_c$ and $\PP^-_c$  on  $\gamma$ for convenience. It follows that, for any $0<\delta\leq 1$, the $(1+\delta)$-th absolute central moment $v_\delta$ of $y$ with law either $\PP_c^+$ or $\PP_c^-$ is
\#\label{thm:min:hd:eq:1}
v_\delta=|c|^{1+\delta}\gamma(1-\gamma) \{ \gamma^\delta+(1-\gamma)^{\delta} \}.
\#
We define  the following $s$-sparse sign-ball  $\cU_n$ as
\$
\cU_n=\big\{\ub: \ub\in \{-1,1\}^n \big\}.
\$
By assumption, there exist $\ub\in \cU_n$ and $\cA$ with $|\cA|=s$ such that $\|\Xb_\cA^\T \ub\|_{\min}/n\geq \alpha$. 
Take $\bbeta^*_1,\,\bbeta^*_2$ supported on $\cA$ and $\bc\in\RR^n$ such that $\bc=c\ub$ for a $c>0$, $\Xb\bbeta_1^*=\bc\gamma$ and $\bbeta_2^*=-\bbeta_1^*$.
Let $\PP^+$ be the distribution of $\by_1=\Xb\bbeta^*_1+\bvarepsilon$ and $\PP_-$ that of  $\by_2=\Xb\bbeta^*_2+\bvarepsilon$. Clearly, we have 
\$
\EE( \varepsilon_i ) =0~~\textnormal{and}~~\EE( |\varepsilon_i|^{1+\delta} )=c^{1+\delta}\gamma(1-\gamma) \{ \gamma^\delta+(1-\gamma)^{\delta} \}.
\$
Let $\cA$ be the support of $\bbeta_1^*$. Then, we have
\begin{gather*}
(\bbeta_1^*)_\cA=c\gamma\Big(\frac{1}{n}\Xb_\cA^\T \Xb_\cA\Big)^{-1}\frac{1}{n}\Xb_\cA^\T \ub,~\textnormal{and}\\
\|\bbeta_1^*\|_2\geq c\gamma  \, \kappa_u^{-1} s^{1/2} \|\Xb_\cA^\T \ub/n\|_{\min}  \geq c\gamma \,\kappa_u^{-1} s^{1/2} \alpha .
\end{gather*}
Let $\widehat\bbeta_k(\by_k)$ be any $s$-sparse estimator. With the above setup, we have
\#\label{thm:min:hd:eq:2}
&\max\Big\{\PP\big(\big\|\hat\bbeta_{1}-\bbeta_1^*\big\|_2\geq c\gamma  \, \kappa_u^{-1} s^{1/2} \alpha  \big),\, \PP\big(\big\|\hat\bbeta_{2}-\bbeta_2^*\big\|_2\geq c\gamma \, \kappa_u^{-1} s^{1/2} \alpha  \big)\Big\}\notag\\
&\geq  \frac{1}{2} \PP\Big( \big\|\hat\bbeta_{1}-\bbeta_1^*\big\|^2_{2}\geq c\gamma \, \kappa_u^{-1} s^{1/2} \alpha ~\textnormal{or}~\big\|\hat\bbeta_{2}-\bbeta_2^*\big\|_2\geq c\gamma \, \kappa_u^{-1}  s^{1/2} \alpha \Big)\notag\\
&\geq \frac{1}{2}\PP\big(\hat\bbeta_{1}=\hat\bbeta_{2}\big)\geq \frac{1}{2}\PP\big(\by_{1}=\by_{2}={\bf 0}\big)
\#
where  we suppress the dependence of $\widehat\bbeta_{k}$ on $\by_{k}$ for simplicity. For the last quantity in the displayed inequality above, taking $\gamma= \log \{1/(2t) \}  / (2n)$ with $t\geq  e^{-n}/2$, we obtain $1-\gamma\geq 1/2$ and
\$
\PP\big(\by_{1}=\by_{2}={\bf 0} \big) = (1-\gamma)^n\geq \Big\{\exp\Big(\frac{-\gamma}{1-\gamma}\Big)\Big\}^n\geq 2t.
\$
Using the fact that
$
c\gamma\geq v_\delta^{1/(1+\delta)}({\gamma}/{2})^{ \delta/ (1+\delta)},
$
this further implies
\$
& \PP\Bigg[  \big\|\hat\bbeta_{1}-\bbeta_1^*\big\|_2 \geq  v_\delta^{1/(1+\delta)}  \kappa_u^{-1} \alpha s^{1/2} \bigg\{ \frac{ \log\{1/(2t)\}}{2n}\bigg\}^{ \delta/ (1+\delta )} \Bigg] \nn \\
& \qquad  \bigvee  \PP\Bigg[  \big\| \hat\bbeta_{2}-\bbeta_2^* \big\|_2\geq  v_\delta^{1/(1+\delta)}  \kappa_u^{-1} \alpha s^{1/2}  \bigg\{ \frac{ \log\{1/(2t)\}}{2n}\bigg\}^{ \delta/ (1+\delta )} \Bigg]
\geq t.
\$
Now since $\cP_{c,\gamma}\subseteq \cP_{\delta}^{v_\delta}$, taking $t=d^{-A}/2$ implies the result for the case where $\delta\in (0,1]$.   When $\delta>1$, the second moment exists. Thus using $v_1 < \infty$ completes the proof.     \qed

\subsection{Proof of Theorem~\ref{robust.hdreg.dev.ineq}}

The proof is almost identical to that of Theorem \ref{thm:hd}. We only need to derive a probability bound for the event $\{ \|  \bxi^*_{\cS} \|_\infty\leq \lambda/2 \}$ under the assumed scaling and moment conditions, where $\bxi^* := \nabla \cL^{\varpi}_\tau(\bbeta^*)$.

Recall that $\bx^{\varpi}_i = ( x^{\varpi}_{i1} , \ldots,  x^{\varpi}_{id})^\T$ with $ x_{ij}^{\varpi} = \psi_\varpi(x_{ij})$ for $i=1,\ldots ,n$ and $j=1,\ldots, d$. Define $\bz_i = (z_{i1},\ldots, z_{id})^\T =\bx_i-  \bx_i^{\varpi}$, where $ z_{ij}=\{  x_{ij} -\varpi\sgn(x_{ij}) \}1(|x_{ij}|>\varpi)$. Moreover, write $\bz_{i\cS}= (z_{ij}1(j\in \cS))\in \RR^d$ and $\epsilon_i=\varepsilon_i+ \langle \bz_i,  \bbeta^* \rangle$. In this notation, we have $\bxi^* =  - n^{-1}\sn \psi_\tau(\epsilon_i )  {\bx}^{\varpi}_i$. From the identity $\EE\{ \psi_\tau(\epsilon_i)  {x}^{\varpi}_{ij} \} = \EE \{ \langle \bz_i , \bbeta^* \rangle {x}^{\varpi}_{ij}  \}   - \EE \{ \epsilon_i - \tau \sgn(\epsilon_i)\}   {x}^{\varpi}_{ij} 1(|\epsilon_i | > \tau)$, we see that
\#
	&  | \EE\{ \psi_\tau ( \epsilon_i ) {x}^{\varpi}_{ij} \}  | \nn \\
    & \leq     M_4 \| \bbeta^* \|_1    \varpi^{-2} + \tau^{-2} \EE(  | \epsilon_i |^3 |  {x}^{\varpi}_{ij} | ) \nn \\
	& \leq  M_4 \| \bbeta^* \|_1    \varpi^{-2} + 4  \tau^{-2}  \big\{ \EE ( |\varepsilon_i |^3 | {x}^{\varpi}_{ij} | )  + \| \bbeta^* \|_2^3 \, \EE(  \| \bz_i \|_2^3 | {x}^{\varpi}_{ij} | ) \big\} \nn \\
	& \leq  M_4 \| \bbeta^* \|_2 \, s^{1/2}   \varpi^{-2} + 4  \tau^{-2}  \big\{ v_2  M_2^{1/2}  +  M_4 \| \bbeta^* \|_2^3 \,  s^{3/2} \big\} . \nn
\#
Then it holds
\#
	 \|  \EE ( \bxi^* ) \|_\infty     \leq    M_4 \| \bbeta^* \|_2 \, s^{1/2}   \varpi^{-2} + 4  \tau^{-2}  \big\{ v_2  M_2^{1/2}  +  M_4 \| \bbeta^* \|_2^3 \,  s^{3/2} \big\}. \label{mean.xi*.bound}
\#
For each $j$ fixed, note that
\#
	\sn \EE \{  {x}^{\varpi}_{ij}  \psi_\tau(\epsilon_i) \}^2 \leq \sn \EE \{   x_{ij}^2  (\varepsilon_i^2 + \langle \bz_i, \bbeta^* \rangle^2 ) \} \leq  n\big(\sigma^2 M_2  +  M_4 \|\bbeta^* \|_2^2 \, s   \big)  ,\nn\\
 \mbox{ and }~ \sn	\EE |  {x}^{\varpi}_{ij}  \psi_\tau(\epsilon_i) |^k \leq \frac{k!}{2} (\varpi \tau /2)^{k-2}  n\big(\sigma^2 M_2  +  M_4 \|\bbeta^* \|_2^2 \, s   \big)~\mbox{ for all } k\geq 3.\nn
\#
Applying Bernstein's inequality gives
\#
	 \bigg| \frac{1}{n} \sn \big[  {x}^{\varpi}_{ij} \psi_\tau (\epsilon_i ) -   \EE \{  {x}^{\varpi}_{ij} \psi_\tau (\epsilon_i )  \} \big] \bigg| \leq    \big(\sigma^2 M_2  +  M_4 \|\bbeta^* \|_2^2 \, s   \big)^{1/2} \sqrt{\frac{2t}{n}}  +  \varpi \tau\frac{ t }{2n} \nn
\#
with probability at least $1- 2e^{-t}$. Taking the union bound over $j \in \cS$, we obtain that, with probability at least $1- 2s e^{-t}$,
\#
 \| \bxi^*_{\cS} -\EE(\bxi^*_{\cS})  \|_{\infty} \leq \big(2\sigma^2 M_2  +  2M_4 \|\bbeta^* \|_2^2 \, s   \big)^{1/2} \sqrt{\frac{t}{n}}  +  \varpi \tau\frac{ t }{2n} .\nn
\#
This, together with \eqref{mean.xi*.bound}, implies that $\PP\{\mathcal{E}(\tau, \varpi, \lambda)\} \geq 1 - 2s e^{-t}$ provided
\#
 \lambda & \geq  2 M_4 \| \bbeta^* \|_2 \, s^{1/2}   \varpi^{-2} + 8  \big\{ v_2  M_2^{1/2}  +  M_4 \| \bbeta^* \|_2^3 \,  s^{3/2} \big\}  \tau^{-2}  \nn \\
 & \quad + 2\big(2\sigma^2 M_2  +  2M_4 \|\bbeta^* \|_2^2 \, s   \big)^{1/2} \sqrt{\frac{t}{n}}  +  \varpi \tau\frac{ t }{n}. \nn
\#
This is the stated result. \qed

\subsection{Proof of Theorem~\ref{thm:A1}}

To begin with, define the parameter set $\Theta_0(r)    =  \{ \bbeta \in \RR^d :    \|  \bbeta - \bbeta ^*  \|_{ \bSigma,2} \leq r \}$ for some $r>0$ to be specified, and let $\hat \bbeta_{\tau, \eta} \in \Theta_0(r)$ be the intermediate estimator introduced in the proof of Theorem~\ref{thm:ld}.

\noindent
{\sc Proof of \eqref{CI.type2}}. In view of \eqref{localized.analysis} and \eqref{pos1}, lying in the heart of the arguments is to derive  deviation inequalities for $\| \bSigma^{-1/2} \nabla\cL_\tau(\bbeta^*)\|_2$ under the moment condition that $v_\delta <\infty$ for some $0<\delta \leq 1$, and to establish the restricted strong convexity for the Huber loss $\cL_\tau$, i.e. there exists some $\kappa>0$ such that
$$
	 \langle \nabla\cL_\tau( \bbeta )-\nabla\cL_\tau(\bbeta^*), \bbeta  -\bbeta^* \rangle \geq \kappa \| \bbeta - \bbeta^* \|_2^2
$$
holds uniformly over $\bbeta $ in a neighborhood of $\bbeta^*$.

First, from \eqref{r0.def} in Lemma~\ref{lm:grad.l2} we see that
\#
 \big\| \bSigma^{-1/2}  \nabla \cL_\tau(\bbeta^* ) \big\|_2 < r_0 :=    4 \sqrt{2} A_0 v_\delta^{1/2} \tau^{(1-\delta)/2} \sqrt{\frac{d+t}{n}}  + 2 A_0 \tau \frac{d+t}{n} +   \frac{v_\delta }{\tau^\delta} \nn
\#
with probability at least $1- e^{-t}$. Next, since $\hat \bbeta_{\tau, \eta} \in \Theta_0(r)$ and according to Lemma~\ref{lm02}, we take $r = \tau/(4A_1^2)$ such that under the scaling \eqref{RSC.scaling},
$$
 \big\langle \nabla\cL_\tau( \hat \bbeta_{\tau, \eta} )-\nabla\cL_\tau(\bbeta^*),\hat \bbeta_{\tau, \eta}  -\bbeta^* \big\rangle \geq \frac{1}{4}  \big\|  \hat \bbeta_{\tau, \eta} - \bbeta \big\|_{ \bSigma,2}^2
$$
with probability at least $1- e^{-t}$. Together, the last two displays and \eqref{localized.analysis} imply that with probability at least $1- 2e^{-t}$,
$\|   \hat{\bbeta}_{\tau, \eta} - \bbeta^*   \|_{ \bSigma,2} \leq 4r_0 < r$ provided $n \geq C_1   (d+t)$, where $C_1 >0$ is a constant depending only on $ A_0$. Following the same arguments as we used in the proof of Theorem~\ref{thm:ld}, this proves \eqref{CI.type2}.

\medskip
\noindent
{\sc Proof of \eqref{BR}}.
From the preceding proof, we see that
\#
	\PP\big\{   \hat{\bbeta} \in \Theta_0(r_1) \big\} \geq 1- 2 e^{-t}   \label{local.ball.rd}
\#
as long as $ n\geq C_1  (d+t)$, where $r_1=4r_0$. Moreover, define random processes $\bzeta(\bbeta) = \cL_\tau(\bbeta) - \EE \cL_\tau(\bbeta)$ and
\#
	\bB(\bbeta) = \bSigma^{-1/2} \big\{ \nabla \cL_\tau(\bbeta) - \nabla \cL_\tau(\bbeta^*) \big\} -  \bSigma^{1/2} (\bbeta - \bbeta^*).  \label{def.B}
\#
To bound $\| \bB(\hat \bbeta_\tau )  \|_2 = \| \bSigma^{1/2} (\hat \bbeta_\tau - \bbeta^*) +  \bSigma^{-1/2} \nabla \cL_\tau(\bbeta^*) \|_2$, the key is to bound the supremum of the empirical process $\{ \bB(\bbeta): \bbeta \in \Theta_0(r) \}$. To that end, we deal with $\bB(\bbeta) - \EE\{\bB(\bbeta ) \}$ and $\EE \{ \bB(\bbeta) \}$ separately, starting with the latter. By the mean value theorem,
\#
	\EE\{\bB(\bbeta ) \} & =  \bSigma^{-1/2} \big\{ \nabla \EE \cL_\tau(\bbeta) - \nabla \EE \cL_\tau(\bbeta^*) \big\}  -  \bSigma^{1/2} (\bbeta - \bbeta^*) \nn \\
	& = \big\{  \bSigma^{-1/2}  \nabla^2 \EE \cL_\tau(\wt \bbeta)  \bSigma^{-1/2}  - \Ib_d \big\} \bSigma^{1/2} (\bbeta - \bbeta^*) , \nn
\#
where $\wt \bbeta $ is a convex combination of $\bbeta $ and $\bbeta^*$. Therefore,
\#
	\sup_{\bbeta \in \Theta_0(r)} \big\|  \EE\{\bB(\bbeta ) \}  \big\|_2  \leq r\times \sup_{\bbeta \in \Theta_0(r)} \big\|  \bSigma^{-1/2}  \nabla^2 \EE \cL_\tau( \bbeta)  \bSigma^{-1/2}  - \Ib_d \big\| . \nn
\#
For $\bbeta\in   \Theta_0(r)$ and $\bu  \in \mathbb{S}^{d-1}$, write $\bdelta = \bSigma^{1/2}(\bbeta - \bbeta^*)$ such that $\| \bdelta \|_2 \leq r$. Let $A_1>0$ be the constant in Lemma~\ref{lm02} that scales as $A_0$.
It follows that
\begin{align}
	& \big| \bu^\T \big\{  \bSigma^{-1/2}  \nabla \EE \cL_\tau(\bbeta) \bSigma^{-1/2} - \Ib_{d} \big\} \bu  \big|  =   \frac{1}{n} \sn \e\big\{ 1\big( | y_i - \langle \bx_i , \bbeta \rangle | > \tau \big) \langle \bu,  \wt \bx_i \rangle^2 \big\}    \nn \\
	& \leq  \frac{1}{n\tau^2 } \sn  \big\{    v_{i,1}  +\e \langle \bdelta ,  \wt \bx_i \rangle^2  \langle \bu,  \wt \bx_i \rangle^2 \big\}     \leq  v_1 \tau^{-2}  +   A_1^4  \tau^{-2}\| \bdelta \|_2^2  \leq   v_1 \tau^{-2} +  A_1^4   r^2  \tau^{-2} , \nn
\end{align}
which, further implies
\#
	\sup_{\bbeta \in \Theta_0(r)} \big\| \EE \{ \bB(\bbeta) \} \big\|_2 \leq v_1 \tau^{-2} +  A_1^4  r^2  \tau^{-2} . \label{mean.B.bound.rd}
\#

Next, we consider $\bB(\bbeta) - \EE\{\bB(\bbeta ) \} =  \bSigma^{-1/2} \{ \nabla \bzeta(\bbeta) - \nabla \bzeta(\bbeta^*) \}$. With $\bdelta =  \bSigma^{1/2}(\bbeta -\bbeta^*)$, define a new process $\overline{\bB}(\bdelta) = \bB(\bbeta) - \EE \{ \bB(\bbeta ) \}$, satisfying $\overline{\bB}(\textbf{0}) = \textbf{0}$ and $\EE\{ \overline{\bB}(\bdelta)\} = \textbf{0}$. Note that, for every $\bu , \bv \in \mathbb{S}^{d-1}$ and $\lambda \in \RR$,
\#
	& \EE \exp\big\{  \lambda \sqrt{n} \, \bu^\T \nabla_{\bdelta} \overline{\bB}(\bdelta) \bv  \big\} \nn \\
	& \leq \prod_{i=1}^n  \bigg( 1 +  \frac{\lambda^2 }{ n } \e \Big[ \big\{
   \langle \bu,  \wt \bx_i \rangle^2  \langle \bv ,  \wt \bx_i \rangle^2   + \big( \e |   \langle \bu,  \wt \bx \rangle    \langle \bv,  \wt \bx  \rangle | \big)^2 \big\}  e^{    \frac{|\lambda|}{\sqrt{n}}  ( | \langle \bu,  \wt \bx_i \rangle \langle \bv,  \wt \bx_i \rangle | + \e  |  \langle \bu,  \wt \bx \rangle \langle \bv ,  \wt \bx \rangle  |  )  }  \Big]  \bigg) \nn \\
	& \leq  \prod_{i=1}^n  \bigg\{ 1 +    e^{\frac{| \lambda | }{\sqrt{n}}}  \frac{\lambda^2}{n}   \e  \big( e^{\frac{ | \lambda| }{\sqrt{n}} | \langle \bu,  \wt \bx_i \rangle \langle \bv,  \wt \bx_i \rangle | } \big) +    e^{\frac{| \lambda |}{\sqrt{n}}}  \frac{\lambda^2}{n} \e  \big(   \langle \bu,  \wt \bx_i \rangle^2 \langle \bv,  \wt \bx_i \rangle^2  e^{\frac{ | \lambda | }{\sqrt{n}} | \langle \bu,  \wt \bx_i \rangle  \langle \bv,  \wt \bx_i \rangle | } \big)  \bigg\}  \nn \\
	& \leq  \prod_{i=1}^n \bigg\{ 1 +  e^{\frac{ | \lambda| }{\sqrt{n}}}  \frac{\lambda^2}{n}    \max_{ \bw \in \mathbb{S}^{d-1} } \e \big(  e^{ \frac{ | \lambda| }{\sqrt{n}} \langle \bw,  \wt \bx  \rangle^2  } \big) +  e^{\frac{| \lambda| }{\sqrt{n}}}  \frac{\lambda^2}{n}    \max_{ \bw \in \mathbb{S}^{d-1}  } \e \big( \langle \bw,  \wt \bx  \rangle^4 e^{ \frac{| \lambda | }{\sqrt{n}} \langle \bw,  \wt \bx  \rangle^2  } \big)  \bigg\}  \nn \\
	& \leq \exp\bigg\{  e^{\frac{ | \lambda| }{\sqrt{n}}}   \lambda^2   \max_{ \bw \in \mathbb{S}^{d-1} } \e \big(  e^{ \frac{ | \lambda | }{\sqrt{n}} \langle \bw,  \wt \bx \rangle^2  } \big) +   e^{\frac{ | \lambda| }{\sqrt{n}}}  \lambda^2   \max_{ \bw \in \mathbb{S}^{d-1}  } \e \big(  \langle \bw,  \wt \bx \rangle^4 e^{ \frac{ | \lambda | }{\sqrt{n}} \langle \bw ,  \wt \bx  \rangle^2  } \big) \bigg\}. \nn
\#
Under Condition~\ref{ass:3.1}, there exist constants $C_2, C_3 >0$ depending only on $A_0$ such that, for any $ |\lambda | \leq \sqrt{n/C_2}$,
$$
\sup_{\bu,\bv \in \mathbb{S}^{d-1} } \EE \exp\big\{  \lambda \sqrt{n} \, \bu^\T \nabla_{\bdelta} \overline{\bB}(\bdelta) \bv  \big\} \leq \exp(C_3^2\lambda^2/2).
$$
With the above preparations and applying Theorem~A.3 in \cite{spokoiny2013bernstein}, we reach
\#
	\PP\Bigg\{  \sup_{\bbeta \in \Theta_0(r)} \| \bB(\bbeta) - \EE\{ \bB(\bbeta) \} \|_2 \geq 6 C_3 (8d+2t)^{1/2} r \Bigg\} \leq e^{-t} \nn
\#
as long as $n \geq C_2 (8d+2t)$. Together with \eqref{mean.B.bound.rd}, this yields
\begin{align}
	& \sup_{\bbeta \in \Theta_0(r_1)} \big\|    \bSigma^{1/2}(\bbeta - \bbeta^*) - \bSigma^{-1/2} \big\{ \nabla \cL_\tau(\bbeta) - \nabla \cL_\tau(\bbeta^*) \big\} \big\|_2  \nn \\
	& \qquad \qquad \qquad \qquad  \leq   v_1  \tau^{-2}  r_1 +  A_1^4     \tau^{-2}  r_1^3  + 6 C_3 (  8d + 2t )^{1/2} n^{-1/2} r_1  \nn
\end{align}
with probability at least $1-e^{-t}$. Combine this bound with \eqref{local.ball.rd} to obtain the stated result \eqref{BR}. \qed

\subsection{Proof of Proposition~\ref{Prop2}}

Since $\e(\varepsilon)=0$, we have $\e \{ \psi_\tau( \varepsilon ) \}  = - \e \{ ( \varepsilon - \tau ) 1 ( \varepsilon > \tau) \} + \e \{ ( - \varepsilon  - \tau ) 1(\varepsilon< -\tau) \}$. Thus, for any $2\leq q \leq 2+  \kappa$, $|\e \psi_\tau( \varepsilon ) | \leq \e \{ | \varepsilon | - \tau ) 1(|  \varepsilon |>\tau) \} \leq  \tau^{1- q } \,\e( | \varepsilon |^q )$. In particular, taking $q$ to be 2 and $2+\kappa$ proves the first conclusion. Next,  note that $\e \{ \psi_\tau^2(  \varepsilon ) \}  = \e ( \varepsilon^2)  - \{  \e \varepsilon^2 1(| \varepsilon  |>\tau) - \tau^2 \mathbb{P}(| \varepsilon |>\tau)\}$. Letting $\eta = | \varepsilon |$, we deduce that
\begin{align}
   & \e\{ \eta^2 1(\eta > \tau ) \}    = 2 \e \int_0^\infty  1(\eta>y) 1(\eta>\tau) y \, dy \nn \\
 & = 2 \mathbb{P}(\eta > \tau ) \int_0^\tau y\,dy + 2\int_\tau^\infty y \mathbb{P}(\eta>y) \, dy  = \tau^2 \mathbb{P}(\eta>\tau) + 2 \int_\tau^\infty y \mathbb{P}(\eta > y) \, dy. \nn
\end{align}
By Markov's inequality, $ \int_\tau^\infty y \mathbb{P}(\eta > y) \, dy \leq \e (\eta^{2+\kappa } )  \int_\tau^\infty y^{-1-\kappa } \, dy = \kappa^{-1}\tau^{-\kappa }\, \e ( \eta^{2+ \kappa })$. Putting the above calculations together proves the second inequality. \qed

\subsection{Proof of Theorem~\ref{hd.huber}}

For simplicity, we write $\hat \bbeta = \hat \bbeta_{\tau, \lambda}$ and assume without loss of generality that $0<\delta\leq 1$.
As in the proof of Theorem~\ref{thm:A1}, we construct an intermediate estimator $\wt  \bbeta_\eta = \bbeta^* + \eta (\hat \bbeta  - \bbeta^*)$ satisfying $\| \wt  \bbeta_\eta - \bbeta^* \|_{\bSigma, 2} \leq r$ for some $r>0$ to be specified. We take $\eta = 1$ if $\| \hat \bbeta  - \bbeta^* \|_{\bSigma, 2} \leq r$; otherwise if $\| \hat \bbeta  - \bbeta^* \|_{\bSigma, 2} > r$, there exists $\eta \in (0,1)$ such that $\| \wt \bbeta_\eta - \bbeta^* \|_{\bSigma, 2} = r$. Lemma~\ref{lm01} demonstrates that
\#
	\langle   \nabla \cL_\tau( \wt  \bbeta_\eta  )  - \nabla \cL_\tau(\bbeta^*) ,  \wt \bbeta_\eta - \bbeta^* \rangle \leq  \eta \langle   \nabla \cL_\tau( \hat \bbeta  )  - \nabla \cL_\tau(\bbeta^*) , \hat \bbeta - \bbeta^* \rangle . \label{localized.bound}
\#

Next, let $\cS \subseteq \{ 1,\ldots, d\}$ be the support of $\bbeta^*$ and  define the $\ell_1$-cone $\cC \subseteq \RR^{d}$:
$$
	\cC = \big\{ \bbeta  \in \RR^d  :  \| (\bbeta - \bbeta^*)_{\cS^{{\rm c}}} \|_1 \leq 3\| (\bbeta - \bbeta^*)_{\cS} \|_1      \big\}.
$$
We claim that
\#
	\hat  \bbeta \in \cC  ~\mbox{ on the event }~ \{   \lambda \geq 2 \| \nabla \cL_\tau(\bbeta^*) \|_{\infty}  \}, \label{cone}
\#
from which it follows
\# \label{cone.l1.bound}
	\| \hat \bdelta \|_1 =  \| \hat{ \bdelta}_{\cS} \|_1 + \|  \hat  \bdelta_{\cS^{{\rm c}}} \|_1 \leq  4 \| \hat  \bdelta_{\cS} \|_1 \leq  4\sqrt{s} \,\| \hat \bdelta \|_2,
\#
where $ \hat  \bdelta  :=  \hat  \bbeta - \bbeta^*$. To prove \eqref{cone}, first, from the optimality of $\hat{\bbeta}$ we see that
\#
	\cL_\tau(\hat \bbeta) - \cL_\tau(\bbeta^*) \leq \lambda \big( \| \bbeta^* \|_1 - \| \hat \bbeta \|_1 \big).  \label{loss.diff.1}
\#
By direct calculation, we have
\#
	\| \hat \bbeta \|_1 -  \| \bbeta^* \|_1 &  \geq \| \bbeta^*_{\cS}  +  \hat \bdelta _{\cS^{{\rm c}}} \|_1  - \| \bbeta^*_{\cS^{{\rm c}}} \|_1 - \| \hat\bdelta _{\cS} \|_1 - \big(  \| \bbeta^*_{\cS} \|_1 + \| \bbeta^*_{\cS^{{\rm c}}} \|_1 \big) \nn \\
	& \geq  \| \hat \bdelta _{\cS^{{\rm c}}} \|_1  - \| \hat \bdelta _{\cS} \|_1 . \nn
\#
Under the scaling $\lambda \geq 2 \| \nabla \cL_\tau(\bbeta^*) \|_{\infty}$, it follows from the convexity of $\cL_\tau$ and Cauchy-Schwarz inequality that
\#
	\cL_\tau(\hat \bbeta) - \cL_\tau(\bbeta^*) &  \geq \langle \nabla \cL_\tau(\bbeta^*), \hat \bdelta \rangle \geq - \| \nabla \cL_\tau(\bbeta^*) \|_\infty \| \hat \bdelta \|_1 \nn\\
	& \geq -\frac{\lambda}{2} \big(  \| \hat  \bdelta _{\cS^{{\rm c}}} \|_1  + \| \hat  \bdelta _{\cS} \|_1 \big). \label{loss.diff.2}
\#
Together, \eqref{loss.diff.1} and \eqref{loss.diff.2} imply $0\leq \frac{\lambda}{2}   (  3 \| \hat \bdelta_{\cS} \|_1 - \| \hat \bdelta_{\cS^{{\rm c}}} \|_1  )$ and thus $\hat \bbeta \in \cC$.

By necessary conditions of extrema in the convex optimization problem \eqref{regularized.huber},
\#
	  \langle \nabla \cL_\tau( \hat \bbeta )   + \lambda   \hat  \bz ,   \hat  \bbeta - \bbeta^* \rangle \leq 0 , \nn
\#
where $\hat  \bz \in \partial \| \hat \bbeta \|_1$ satisfies $\langle \hat  \bz , \bbeta^* - \hat \bbeta \rangle \leq \| \bbeta^* \|_1 - \| \hat \bbeta \|_1$. Under the scaling $\lambda \geq 2 \| \nabla \cL_\tau(\bbeta^*) \|_{\infty}$, it holds
\#
	&  \langle \nabla \cL_\tau( \hat \bbeta) - \nabla \cL_\tau(\bbeta^*) ,  \hat \bbeta - \bbeta^* \rangle  \leq  \lambda\big(  \| \bbeta^* \|_1 -   \|  \hat \bbeta \|_1 \big) +  \frac{\lambda }{2} \| \hat \bbeta - \bbeta^* \|_1  \nn \\
& \leq  \lambda \big(  \|  \hat \bdelta_{\cS} \|_1 -  \| \hat  \bdelta_{\cS^{{\rm c}}} \|_1   \big)  +  \frac{\lambda }{2} \| \hat \bbeta - \bbeta^* \|_1  \leq \frac{\lambda }{2} \big( 3 \|  \hat  \bdelta_{\cS} \|_1 -   \| \hat   \bdelta_{\cS^{{\rm c}}} \|_1  \big)  . \nn
\#
Together with \eqref{localized.bound}, this implies
\#
\langle   \nabla \cL_\tau( \wt  \bbeta_\eta  )  - \nabla \cL_\tau(\bbeta^*) , \wt \bbeta_\eta - \bbeta^* \rangle \leq \frac{ 1 }{2} \lambda \eta \big(  3 \| \hat \bdelta_{\cS} \|_1 - \| \hat  \bdelta_{\cS^{{\rm c}}} \|_1 \big)  .
 \label{basic.inequality}
\#
Moreover, we introduce $\wt \bdelta_{\eta} = \wt \bbeta_\eta - \bbeta^*$ and note that $\wt \bdelta_\eta = \eta \hat\bdelta$. By \eqref{cone}, we also have $\wt \bbeta_\eta \in \cC$ under the assumed scaling.

Let $\Omega_r$ be the event on which \eqref{RSC.bound0} holds. Then $\PP(\Omega_r^{{\rm c}}) \leq d^{-1}$ under the scaling \eqref{sample.size.scaling.1} and it holds on $\Omega_r \cap  \{   \lambda \geq 2 \| \nabla \cL_\tau(\bbeta^*) \|_{\infty}  \}$ that
\#
\langle \nabla \cL_\tau(\wt \bbeta_\eta ) - \nabla \cL_\tau(\bbeta^*) ,   \wt \bbeta_\eta  - \bbeta^* \rangle \geq  \frac{1}{4} \| \wt \bdelta_\eta \|_{\bSigma, 2}^2  \geq  \frac{1}{4} \kappa_l^{1/2} \| \wt \bdelta_\eta  \|_2 \| \wt \bdelta_\eta  \|_{\bSigma , 2} .  \nn
\#
Substituting this lower bound into \eqref{basic.inequality} yields
\#
	 \frac{1}{4}  \kappa_l^{1/2}   \| \wt \bdelta_\eta  \|_2 \| \wt \bdelta_\eta  \|_{\bSigma ,2} \leq     \frac{3}{2} \lambda \eta    \| \hat \bdelta_{\cS} \|_1   \leq \frac{3}{2} \lambda s^{1/2} \|  \eta \hat \bdelta \|_2 =  \frac{3}{2} \lambda  s^{1/2} \|   \wt \bdelta_{\eta} \|_2 . \nn
\#
Canceling $\| \wt \bdelta_\eta  \|_2$ on both sides delivers
\#
\| \wt \bdelta_\eta  \|_{\bSigma, 2} \leq  6 \kappa_l^{-1/2} s^{1/2} \lambda     ~~\mbox{ and }~~ \| \wt \bdelta_\eta  \|_1 \leq  24  \kappa_l^{-1} s \lambda  \label{intermediate.bounds}
\#
under the scaling $ \lambda \geq 2 \| \nabla \cL_\tau(\bbeta^*) \|_{\infty}$ and \eqref{sample.size.scaling.1} .

It remains to calibrate the parameters $\tau, \lambda$ and $r$. First, applying Lemma~\ref{lm:grad} with $\tau = \tau_0 (n/\log d)^{1/(1+\delta)}$, we see that
\#
	 \| \nabla \cL_\tau(\bbeta^*) \|_\infty \leq  c_1   \max_{1 \leq j\leq d} \sigma_{jj}^{1/2}    \tau_0  \bigg( \frac{\log d}{n} \bigg)^{\delta/(1+\delta)} \nn
\#
with probability at least $1- 2d^{-1}$, where $c_1 = (2\sqrt{2}+1)A_0 + 1$. We therefore choose $\lambda =c_2   \max_{1\leq j\leq d} \sigma_{ jj}^{1/2}  \tau_0  \{ (\log d)/n \}^{\delta/(1+\delta)}$ for some constant $c_2 \geq 2c_1$, such that $\lambda \geq 2 \| \nabla \cL_\tau(\bbeta^*) \|_\infty$ with probability at least $1-2 d^{-1}$. Next, according to \eqref{sample.size.scaling.1}, the restricted strong convexity \eqref{RSC.bound0} holds with $r \asymp \kappa_l^{-1/2} A_0 \max_{1\leq j\leq d} \sigma_{jj}^{1/2}  \tau \sqrt{(\log d)/n}$. Putting the above calculations together, we conclude that
\#
	\big\| \wt   \bbeta_\eta - \bbeta^* \big\|_{\bSigma, 2} \leq 6c_2  \kappa_l^{-1/2}     \max_{1 \leq j\leq d} \sigma_{jj}^{1/2}  \tau_0 \, s^{1/2}  \bigg( \frac{\log d}{n} \bigg)^{\delta/(1+\delta)} < r   \label{rescaled.error.bound}
\#
with probability at least $1-3 d^{-1}$, assuming the scaling $n\gtrsim \kappa_l^{-1}  A_0^2 A_1^4    \max_{1\leq j\leq d} \sigma_{jj} \,s \log d$. By the construction of $\wt  \bbeta_\eta$, with the same probability we must have $\eta =1$ and therefore $\hat \bbeta = \wt \bbeta_\eta$. The stated result \eqref{l1huber.bounds} then follows from \eqref{intermediate.bounds}.  \qed

\subsection{Proof of Corollary~\ref{hd.huber.corr}}
Recall that $\bx_1,\ldots, \bx_n$ are i.i.d. random vectors from a sub-Gaussian vector $\bx = ( x_1, \ldots, x_d)^\T$ with $\EE(\bx \bx^\T)=\bSigma$.
Let $\bPsi = \Xb \bSigma^{-1/2}$ be an $n\times d$ matrix whose rows are independent isotropic sub-Gaussian random vectors.
Since $\kappa_l = \lambda_{\min}(\bSigma )>0$, Definition~1 in \cite{RZ2013} holds with $s_0=s$, $k_0=3$, $A= \bSigma^{1/2}$ and $K(s_0, k_0, A) = \kappa_l^{-1/2}$. Taking $\delta=1$ in Theorem~16 of \cite{RZ2013} we obtain that, with probability at least $1 -2 d^{-1}$,
$$
	 \frac{1}{\sqrt{n}} \frac{\| \Xb (  \bbeta - \bbeta^*) \|_2 }{\| \bbeta - \bbeta^* \|_{\bSigma,2}}  =  \frac{1}{\sqrt{n}} \frac{\| \bPsi \bSigma^{1/2} (\bbeta - \bbeta^*) \|_2 }{\| \bSigma^{1/2}  ( \bbeta - \bbeta^* ) \|_{2}}  \leq 2
$$
for all $\bbeta \in \cC$ as long as $n \gtrsim \kappa_l^{-1} A_0^4 \max_{0\leq j\leq d} \sigma_{jj}  \,  s \log d$. This, together with \eqref{cone} and \eqref{rescaled.error.bound}, proves  \eqref{prediction.error}. \qed

\end{document}